\definecolor{bettergreen}{rgb}{0,.7,0}
\newcommand*{\rom}[1]{\text{\expandafter\@slowromancap\romannumeral #1@}}
\newcommand{\op}[1]{\operatorname{#1}}
\newcommand{\ring}[1]{{\mathbb #1}}
\newcommand{\cal}[1]{\mathcal{#1}}
\newcommand{\locus}[1]{\op{Iv}(#1)}
\def\NF{\op{NF}}
\def\UF{\op{UF}}
\def\VF{{\op{VF}}}
\def\R{\cal{R}}
\def\Y{\Upsilon}
\def\s{{\mathfrak{f}}}
\def\bf{\mathbf f}
\newcommand{\cF}{\mathcal{F}}
\newcommand{\fg}{\mathfrak{g}}
\newcommand{\fb}{\mathfrak{b}}
\newcommand{\fn}{\mathfrak{n}}
\newcommand{\fc}{\mathfrak{c}}
\newcommand{\fh}{\mathfrak{h}}
\newcommand{\reg}{\mathrm{rss}}
\newcommand\ord{\mathrm{ord}}
\newcommand\ac{\overline{\mathrm{ac}}}
\newcommand{\Loc}{\mathrm{Loc}}
\theoremstyle{plain}
\newtheorem{thm}{Theorem}
\newtheorem{theorem}[thm]{Theorem}
\newtheorem{lem}[thm]{Lemma}
\newtheorem{cor}[thm]{Corollary}
\theoremstyle{definition}
\newtheorem{rem}[thm]{Remark}
\title{Endoscopic transfer of orbital integrals in large residual characteristic}
\author{Julia Gordon and Thomas Hales}
\begin{document}

\begin{abstract} This article constructs Shalika germs in the context
  of motivic integration, both for ordinary orbital integrals and
  $\kappa$-orbital integrals.  Based on transfer principles in motivic
  integration and on Waldspurger's endoscopic transfer of smooth
  functions in characteristic zero, we deduce the endoscopic
  transfer of smooth functions in sufficiently large residual
  characteristic.
\end{abstract}

\maketitle

We dedicate this article to the memory of Jun-Ichi Igusa.  The second
author wishes to acknowledge the deep and lasting influence that
Igusa's research has had on his work, starting with his work as a
graduate student that used Igusa theory to study the Shalika germs of
orbital integrals, and continuing today with themes in motivic
integration that have been inspired by the Igusa zeta function.

\bigskip

This article establishes the endoscopic matching of smooth functions
in sufficiently large residual characteristic.  The main conclusions
are based on four fundamental results: Langlands-Shelstad descent for
transfer factors~\cite{LSxf}, Ng\^o's proof of the fundamental
lemma~\cite{ngo2010lemme}, Waldspurger's proof that the fundamental
lemma implies endoscopic matching of smooth functions in
characteristic zero~\cite{waldspurger1997lemme}, and the
Cluckers-Loeser version of motivic integration~\cite{CL}, including
transfer principles for deducing results for one nonarchimedean field
from another nonarchimedean field with the same residue
field~\cite{CLe}.  We use recent extensions of the transfer principle
to transfer linear dependencies from one field to another \cite{CGH2}.

We note that the term transfer is used with two separate meanings in
this article.  {\it Endoscopic matching}\footnote{It is called
  endoscopic transfer in the literature, title, and abstract, but we
  prefer to call it endoscopic matching in the body of the article
  because of the other uses of the word transfer. We avoid the awkward
  but apt phrase ``transfer of transfer.''}  refers to the matching of
$\kappa$-orbital integrals on a reductive group with stable orbital
integrals on an endoscopic group, in a form made precise by the
Langlands-Shelstad transfer factor.  On the other hand, {\it transfer
  principles} refer to the transfer of first-order statements or
properties of constructible functions from one nonarchimedean field to
another nonarchimedean field with the same residual characteristic.  In
this article, we will refer to endoscopic matching, transfer factors,
and transfer principles.

In our main results, the constraints on the size of the residual
characteristic are not effective.  This means that our results are not
known to apply to any particular nonarchimedean field of positive
characteristic. This seems to be a serious limitation of our methods.
Nonetheless, we hope that our results about the constructibility of
Shalika germs can serve as a further illustration of the close
connection between harmonic analysis of $p$-adic groups and motivic
integration.

\subsection{Acknowledgement} J.G. is grateful to Radhika Ganapathy for
suggesting this project and helpful conversations at the beginning; we
also thank MSRI, where this collaboration began.

\section{Statement of results}

This introductory section surveys the main results of this article.
Unexplained terminology and notation are explained in the main body of
the article.  

We use the concepts of {\it definable sets} and {\it constructible
  motivic functions} (or constructible functions for short) from
\cite{CL}.  All constructible functions in this article are understood
in this sense. In fact, in \cite{CL} a closely related notion of
\emph{definable subassignment} is used instead of our notion of a
\emph{definable set}, but this distinction does not affect the
contents of this article.

In this article, by nonarchimedean field we mean a non-discrete
locally compact nonarchimedean valued field; that is, a field
isomorphic to a finite extension of $\ring{Q}_p$ or $\ring{F}_p(\!(t)\!)$
for some prime $p$.  Let $\op{Loc}_m$ denote the set of all
nonarchimedean fields whose residual characteristic is at least
$m\in \ring{N}=\{0,1,2,\ldots\}$.  To avoid set-theoretic issues, we
assume that (the carrier of) each field is a subset of some fixed
cardinal number (that is sufficiently large to obtain every such field
up to isomorphism).  If $S$ is a definable set, and $F\in \op{Loc}_m$,
we write $S(F)$ for the interpretation of $S$ in $F$.  If $f$ is a
constructible function on $S$, we write $f_F$ for the corresponding
function $f_F:S(F)\to \ring{C}$.

Let $G_{/Z}$ be a definable reductive group over a definable cocycle
space $Z$ in the sense of \cite{CGH} with a given set of fixed choices
enumerated in Section~\ref{sec:fixed}.  For each $k\in\ring{N}$, there
is a definable set $\NF^k_G$ representing $k$-tuples of Barbasch-Moy
pairs $\Y=(N,\s)$, in the sense of Section~\ref{sec:nilpotent}, with
pairwise non-conjugate nilpotent elements $N$ in the Lie algebra $\fg$
of $G$.  Here, $\s\in\cF$, where $\cF$ is a parahoric indexing set.

For $F\in \op{Loc}_m$ and $z\in Z(F)$, we have $F$-points $G_z(F)$ of
a connected reductive group over $F$, obtained by twisting the split
group by the cocycle $z$. We also have $F$-points $\fg_z(F)$ of a Lie
algebra. There is a definable set $\fg^\reg$ of regular semisimple
elements in $\fg$.  

The Shalika germs of orbital integrals were first constructed in
\cite{shalika1972theorem}.  Our first theorem establishes the
existence of constructible motivic functions that represent Shalika
germs.  The following theorem is proved in Section~\ref{sec:msg}.

\begin{theorem} [Lie algebra Shalika germs]\label{thm:lie-shalika} 
  For each $n\in \ring{N}$, there exists an $n$-tuple $\Gamma$ of
  constructible functions with domain $\fg^\reg\times_Z \NF^n_G$ and a
  constructible function $d_n$ with domain $\NF^n_G$.  There exists
  $m\in \ring{N}$, such that for every field $F\in \op{Loc}_{m}$, and
  for all $z\in Z(F)$, if $k=k_{F,z}$ is the number of nilpotent
  orbits in $\fg_z(F)$, then $d_{k,F}$ does not vanish anywhere on the
  nonempty set $\NF^{k}_{G_z}(F)$.  Moreover, for every $\Y\in
  \NF^{k}_{G_z}(F)$, the function $X\mapsto
  \Gamma(X,\Y)_{i,F}/d_{k,F}(\Y)$ is the Shalika germ (for some
  normalization of measures) at $X\in \fg^\reg_z(F)$ on the orbit of
  $N_i$, where $N_i$ is the first component of $\Y_i=(N_i,\s_i)$, for
  $i=1,\ldots,k$.
\end{theorem}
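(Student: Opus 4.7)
The plan is to construct the motivic Shalika germs by inverting a linear system built from Barbasch–Moy test functions paired against nilpotent orbital integrals. The starting point is the classical Shalika germ expansion: for regular semisimple $X \in \fg^{\reg}_z(F)$ sufficiently close to the origin (in a sense depending on the support of a test function $f$),
\[
O_X(f) \;=\; \sum_{i=1}^{k} \Gamma_{N_i}(X)\, O_{N_i}(f),
\]
where $N_1,\dots,N_k$ runs through representatives of the nilpotent orbits in $\fg_z(F)$. The idea is to plug in $n$ specific, definable test functions, one for each parahoric datum $\s_j$ appearing in the tuple $\Y=((N_1,\s_1),\dots,(N_n,\s_n))$, and to recover the germs by Cramer's rule whenever the resulting matrix is invertible.

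First I would exhibit, in definable families, Barbasch–Moy test functions $\s \mapsto f_{\s}$ on $\fg$ (essentially appropriately scaled characteristic functions attached to the parahoric indexed by $\s$). Because these functions are manifestly definable, the constructibility of nilpotent and regular semisimple orbital integrals --- following the Cluckers–Loeser framework \cite{CL} and the results of \cite{CGH} --- implies that the entries
\[
M(\Y)_{ij} \;:=\; O_{N_i}(f_{\s_j}),\qquad i,j=1,\dots,n,
\]
and the determinant $d_n(\Y) := \det M(\Y)$ are constructible functions on $\NF^n_G$, and that $(X,\Y) \mapsto O_X(f_{\s_j})$ is constructible on $\fg^{\reg} \times_Z \NF^n_G$. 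Setting
\[
\Gamma(X,\Y)_i \;:=\; \sum_{j=1}^n \op{adj}(M(\Y))_{ij}\, O_X(f_{\s_j}),
\]
the quotient $\Gamma(X,\Y)_i/d_n(\Y)$ is, wherever $d_n(\Y)\neq 0$, the $i$-th Shalika germ associated to $N_i$ by Cramer's rule applied to the Shalika expansion. This gives the required $\Gamma$ and $d_n$.

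The main obstacle is the non-vanishing claim: for the value $k=k_{F,z}$ equal to the number of nilpotent orbits in $\fg_z(F)$, the function $d_{k,F}$ must be non-zero at \emph{every} point of $\NF^k_{G_z}(F)$. In characteristic zero this is precisely the Barbasch–Moy content: for any genuine tuple of Barbasch–Moy pairs, the parahoric-indexed test functions linearly separate the nilpotent orbital integrals, so $M(\Y)$ is invertible. To transfer this to large residual characteristic I would apply the Cluckers–Loeser transfer principle \cite{CLe}, strengthened by the linear-independence transfer of \cite{CGH2}, to the first-order statement that $d_k$ does not vanish anywhere on $\NF^k_{G_z}$; this yields an (ineffective) bound $m$ above which the conclusion holds, explaining the non-effective quantifier on the residual characteristic. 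A parallel transfer argument governs the number $k_{F,z}$ of nilpotent orbits and the non-emptiness of $\NF^{k_{F,z}}_{G_z}(F)$ for $F \in \op{Loc}_m$, completing the proof.
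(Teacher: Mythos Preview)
Your overall architecture --- set up a square linear system by evaluating orbital integrals against a family of definable test functions, take the adjugate, and define $\Gamma$ and $d_n$ by Cramer's rule --- is exactly what the paper does. But there is a genuine gap in your choice of test functions, and a needless detour in your non-vanishing argument.

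\textbf{The test functions must depend on the full pair $\Y_j=(N_j,\s_j)$, not just on $\s_j$.} You write $\s\mapsto f_\s$ and speak of ``parahoric-indexed test functions''; taken literally this makes the $j$-th column of $M(\Y)$ depend only on $\s_j$. But distinct Barbasch--Moy pairs routinely share the same parahoric $\s$ (already for $SL_2$ there are more nilpotent orbits than facets in $\cF$), so for many $\Y\in\NF^k_{G_z}(F)$ your matrix has repeated columns and $d_k(\Y)=0$. The paper instead takes $1_{\Y_j}$ to be the characteristic function of the coset $N_j+\fg_{\s_j,0+}$, so that the test function genuinely depends on $N_j$. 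With this correction your $M(\Y)$ becomes the paper's $\Theta(\Y)$ with entries $O(N_i,\Y_j)$, and your $\Gamma$ matches $\Theta^a(\Y)\,O(X,\Y)$.

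\textbf{Non-vanishing needs no transfer.} Once the test functions are $1_{\Y_j}=1_{N_j+\fg_{\s_j,0+}}$, the Barbasch--Moy closure property (if $N'$ is nilpotent and $N'\in N_j+\fg_{\s_j,0+}$ then the orbit of $N_j$ lies in the closure of the orbit of $N'$) forces $O(N_i,\Y_j)\ne 0$ only when $O(N_j)\subseteq\overline{O(N_i)}$. Ordering the $N_i$ compatibly with orbit closure makes $\Theta(\Y)$ triangular, and the diagonal entries $O(N_i,\Y_i)$ are visibly nonzero since $N_i\in N_i+\fg_{\s_i,0+}$. Thus $d_k(\Y)\ne 0$ for every $\Y\in\NF^k_{G_z}(F)$, directly and uniformly for $F\in\Loc_m$ (the bound $m$ coming only from the Barbasch--Moy theorem itself). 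Your proposed route through the transfer principle is both unnecessary and underspecified: transferring \emph{pointwise} non-vanishing of a constructible function is more delicate than transferring identical vanishing, and invoking the linear-independence transfer of \cite{CGH2} here is overkill.
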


Although the functions are defined on the full set of regular elements
on the Lie algebra $\fg$, it is only their behavior in a small
neighborhood of $0$ that matters.  We make no claims about the their
behavior far from $0$.

The first components of $\Y_i=(N_i,\s_i)$, for $i=1,\ldots,k$, give a
complete non-redundant enumeration of nilpotent orbits of $G_z$.  The
theorem realizes all Shalika germs as constructible functions.  In a
sense that can be made precise (the residual characteristic must be
large with respect to fixed choices of the field-independent data such
as the root data), every large residual characteristic reductive group
can be represented as fiber $G_z$ in a definable reductive group $G_{/Z}$,
so that the theorem is general in large residual characteristic.

We obtain a similar representation as constructible functions for
Shalika germs in the group in terms of unipotent conjugacy classes.
For each $n$, there is a definable set $\UF^n_G$ representing
$n$-tuples of unipotent Barbasch-Moy pairs in $G$ whose tuple of first
coordinates are pairwise non-conjugate (Section~\ref{sec:nilpotent}).
The following theorem is proved in Section~\ref{sec:adapt}.

\begin{theorem} If we replace $\NF^n_G$ (resp. $\NF^{k}_{G_z}(F)$,
  $\fg^\reg$) with $\UF^n_{G}$ (resp. $\UF^{k}_{G_z}(F)$, $G^\reg$)
  in Theorem~\ref{thm:lie-shalika}, the same statement holds in the
  group.
\end{theorem}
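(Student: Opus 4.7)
The plan is to reduce the group statement to Theorem~\ref{thm:lie-shalika} by transporting everything along a definable local isomorphism between a neighborhood of the identity in $G$ and a neighborhood of $0$ in $\fg$. In large residual characteristic (depending on the root datum of $G_{/Z}$), the truncated exponential, or alternatively a Cayley-type map, provides a $G$-equivariant definable bijection $\phi$ between a definable neighborhood $\cal{V}\subset \fg$ of $0$ and a definable neighborhood $\cal{U}\subset G$ of the identity, with definable Jacobian. Under $\phi$, nilpotent elements correspond bijectively to unipotent elements, conjugation corresponds to the adjoint action, and the Kostant-Rallis/DeBacker style parametrization of unipotent Barbasch-Moy pairs matches the nilpotent parametrization. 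In particular, $\phi$ induces, for each $n$, a definable isomorphism $\UF^n_G \cong \NF^n_G$ compatible with the $G_z$-orbit structure when the residue characteristic is large enough.

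Next I would transport the Shalika germ expansion. For regular semisimple $\gamma\in G_z(F)$ close enough to the identity, there is $X\in\fg_z^\reg(F)$ close to $0$ with $\gamma=\phi(X)$, and the orbital integral $O_\gamma(f)$ on the group equals, up to the definable Jacobian factor of $\phi$, the orbital integral $O_X(f\circ\phi\cdot|\mathrm{Jac}\,\phi|)$ on the Lie algebra. Consequently the Shalika germ expansion at the identity in the group pulls back to the Shalika germ expansion at $0$ in the Lie algebra, with unipotent orbits matched to nilpotent orbits via $\phi$. Thus the group Shalika germ at the unipotent class of $\phi(N_i)$ equals the Lie algebra Shalika germ at the nilpotent orbit of $N_i$, composed with $\phi^{-1}$, times the definable Jacobian correction.

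With this matching in hand, I would define the constructible tuple $\Gamma^{\op{group}}$ on $G^\reg\times_Z \UF^n_G$ and the constructible function $d_n^{\op{group}}$ on $\UF^n_G$ by pulling back the functions $\Gamma$ and $d_n$ from Theorem~\ref{thm:lie-shalika} along the definable identifications $\UF^n_G \cong \NF^n_G$ and $G^\reg \supset \cal{U}^\reg \xrightarrow{\phi^{-1}} \cal{V}^\reg\subset \fg^\reg$, and then extending by zero (or arbitrarily) outside $\cal{U}^\reg$, since the statement only concerns germ behavior near the identity. The conclusion about non-vanishing of $d_k^{\op{group}}$ and about correctness of the Shalika germ follows directly from the corresponding Lie algebra statement and from the fact that, for large residual characteristic, $\phi$ induces a bijection between unipotent and nilpotent orbits.

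The main obstacle, and the reason the residual characteristic constraint must be increased, is verifying in a uniformly definable way that $\phi$ behaves well on a neighborhood large enough to capture the germ expansion: namely, that a suitable definable exponential/Cayley map exists over $Z$, that its Jacobian is a constructible function, that it matches unipotent Barbasch-Moy pairs with nilpotent ones (including the parahoric component $\s$), and that orbital integral identities hold uniformly in $F\in\op{Loc}_m$. Once this uniform definable version of the classical exp/log correspondence is established as in the Barbasch-Moy / DeBacker framework, the rest of the argument is a formal transport of constructible data from Theorem~\ref{thm:lie-shalika}.
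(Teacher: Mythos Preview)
Your approach is workable but differs from the paper's, and is more delicate than necessary.

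The paper does \emph{not} transport the Lie algebra result along an exponential-type map. Instead, it simply reruns the proof of Theorem~\ref{thm:lie-shalika} verbatim in the group: form constructible orbital integrals $O(\gamma,\Y)=\int_{O(\gamma)}1_\Y\,d\mu$ for $\gamma$ regular semisimple or unipotent in $G$, build the matrix $\Theta(\Y)$ with entries $O(u_i,\Y_j)$, set $d_n=\det\Theta$ and $\Gamma=\Theta^a\cdot O$, and conclude. All ingredients have already been set up directly on the group side: the definable sets $\UF^n_G$ of unipotent Barbasch--Moy pairs (after Theorem~\ref{thm:bmg}), the invariant volume forms $d\mu^\reg$ on regular semisimple orbits in $G$ and on unipotent orbits (Section~\ref{sec:volume}), and the upper-triangularity of $\Theta$ when $n=k$, which follows from the closure property in Theorem~\ref{thm:bmg}. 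No bridge to the Lie algebra is needed.

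Your route via a definable $\phi:\cal{V}\to\cal{U}$ can be made to work, but you correctly flag the sticking points, and they are real. A truncated exponential is not literally $\op{Ad}$-equivariant unless you argue that on a small enough definable neighborhood the truncation equals the full series; a Cayley map is equivariant and rational but you must check it intertwines the chosen invariant measures (the paper normalizes $d\mu^\reg$ on $G$ via a Leray residue of a Haar form, not by pushing forward the Lie algebra measure). You would also need to verify that $\phi$ carries $\fg_{\s,0},\fg_{\s,0+}$ to $G_{\s,0},G_{\s,0+}$ compatibly with the Barbasch--Moy closure condition; the paper does exactly this kind of check in the proof of Theorem~\ref{thm:bmg}, but only in characteristic zero, and then transfers the first-order consequence. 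Pulling back $\Gamma$ and $d_n$ along $\phi$ would thus require a chain of compatibilities that the paper's direct rerun avoids entirely. In short: your argument is a legitimate alternative, but the paper's approach is both shorter and sidesteps the definability-of-$\exp$ issues you raise.
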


Our next theorem gives a representation of the Langlands-Shelstad
transfer factor as a constructible function.  This result was
previously known for unramified groups \cite{CHL}.  The proof appears
in Section~\ref{sec:lsxfer}. 

\begin{theorem}[Constructibility of Lie algebra transfer
  factors]\label{thm:xfer-factor} 
  Let $(G,H)_{/Z}$ be a definable reductive group and associated
  endoscopic group over a cocycle space $Z$, with Lie algebras $\fg$
  and $\fh$.  There exists a constructible function $\Delta$ on
\begin{equation}\label{eqn:delta-domain}
V := \fh^{G-\reg}\times_Z
  \fg^\reg\times_Z
\fh^{G-\reg}\times_Z
  \fg^\reg
\end{equation}
and a natural number $m\in \ring{N}$ such that for every field $F\in
\op{Loc}_{m}$, every $z\in Z(F)$, and for every 
\[
(X_H,X_G,\bar X_H,\bar X_G)\in
V_z(F),\]  
the Lie algebra Langlands-Shelstad transfer factor is
$\Delta_F(X_H,X_G,\bar X_H,\bar X_G)$.
\end{theorem}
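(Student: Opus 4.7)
The plan is to decompose the Langlands-Shelstad transfer factor into its standard factors $\Delta_I$, $\Delta_{II}$, $\Delta_{III_1}$, $\Delta_{III_2}$, $\Delta_{IV}$ (in the Lie algebra formulation of \cite{LSxf}) and to exhibit each as a constructible motivic function on $V$. Before attacking the individual factors, I would encode the auxiliary input data ($a$-data, $\chi$-data, choice of $L$-embedding, and admissible isomorphism between the endoscopic torus in $H$ and the relevant centralizing torus in $G$) as definable parameters varying definably over $Z$. This is possible because all these choices are determined by combinatorial data attached to the fixed root datum, together with Galois-cohomological information already recorded definably in the cocycle datum $z\in Z$.

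Next I would handle each piece. The factor $\Delta_{IV}$, essentially $|D_G(X_G)/D_H(X_H)|_F^{1/2}$ where $D_G,D_H$ are the Weyl discriminants, is constructible because the discriminants are polynomial in the Lie algebra elements and half-integer powers of the residue cardinality are built into the motivic framework of \cite{CL}. The factor $\Delta_{II}$ is a product over Galois orbits of roots, each contributing a character value of the form $\chi_\alpha(\alpha(X_G)/a_\alpha)$; the $\chi_\alpha$ arising from $\chi$-data are at worst tamely ramified and can be expressed using definable quadratic-residue symbols and definable norms from the (definable) splitting field of the root, just as in the unramified treatment of \cite{CHL}. The ratio with the value at the reference pair $(\bar X_H,\bar X_G)$ presents no additional difficulty: absolute transfer factors are only well-defined up to a common constant, and this constant cancels in the ratio, so $\Delta$ is a well-defined constructible function on $V$ once its individual pieces are.

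The main obstacle is the combined factor $\Delta_I\cdot\Delta_{III}$, which involves the splitting invariant and the image in $H^1(F,T)$ of a 1-cocycle valued in a maximal torus, paired with the endoscopic character $\kappa$. To handle this I would combine Langlands-Shelstad descent \cite{LSxf} with a definable choice of pinning of the quasi-split form over $Z$: conjugating the given torus to a fixed definable torus by a definably chosen element, and combining with the cocycle $z$, produces an explicit cocycle representative whose pairing with $\kappa$ reduces to the evaluation of a product of definable characters on definable elements. The integer $m$ must be taken large enough that the splitting fields of all maximal tori arising in $V_z(F)$ are tame over $F$, the relevant norm maps behave as expected, and the motivic Galois-cohomology computations match their classical counterparts. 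Under these conditions, the product of the four constructible pieces yields a constructible function on $V$ that computes the Lie algebra Langlands-Shelstad transfer factor in every $F\in\op{Loc}_m$.
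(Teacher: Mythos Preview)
Your general strategy---decompose the transfer factor into its Langlands--Shelstad pieces and show each is constructible---matches the paper's, and your treatment of $\Delta_{IV}$ and the cocycle-plus-Tate--Nakayama handling of $\Delta_{I}\cdot\Delta_{III_1}$ is broadly in line with what the paper does. But there is a genuine gap in your handling of $\Delta_{II}$ (and implicitly $\Delta_{III_2}$).

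You propose to treat $\Delta_{II}$ by expressing the $\chi$-data characters $\chi_\alpha$ ``using definable quadratic-residue symbols and definable norms.'' This is precisely the step that current motivic technology does not support: as the paper stresses elsewhere, there is no good theory of multiplicative characters in the Cluckers--Loeser framework, and $\chi$-data characters are genuinely multiplicative characters on $F_\alpha^\times$, not in general quadratic. Your assertion that they are ``at worst tamely ramified and can be expressed using definable quadratic-residue symbols'' is not justified, and indeed is the obstacle that forces the paper to restrict the \emph{group} transfer factor to a neighborhood of the identity (Theorem~\ref{thm:gtf}).

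The paper's way around this, which you are missing, is a specific choice of $a$-data adapted to the Lie algebra: one sets $a_\alpha := \alpha(X)$, where $X\in\fg^*$ is obtained from $X_H$ via the Kostant section. With this choice the argument of each $\chi_\alpha$ in $\Delta_{II}$ is $1$, so $\Delta_{II}\equiv 1$ and no $\chi$-data is needed at all; likewise $\Delta_{III_2}=1$ in the Lie algebra setting. Everything that remains ($\Delta_I$, $\Delta_{III_1}$, $\Delta_{IV}$) is then built from explicit definable cocycles $\lambda(T_{\mathrm{sc}})$ and $\mathrm{inv}(X_H,X_G,\bar X_H,\bar X_G)$, paired with $\kappa$ via a definable Tate--Nakayama construction whose values lie in a fixed finite group of roots of unity; the constructible function is then written as a finite $\mathbb{C}$-linear combination of indicator functions of the definable level sets of these pairings. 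Once you adopt this $a$-data trick, the rest of your outline goes through essentially as you describe.
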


Here, $\fh^{G-\reg}$ denotes the definable set of $G$-regular
semisimple elements in $\fh$.  We are unable to obtain the
Langlands-Shelstad transfer factor on the full group, because there is
currently no good theory of multiplicative characters for motivic
integration. The following theorem is proved in
Section~\ref{sec:adapt}, by adapting the Lie algebra proof.

\begin{theorem}[Constructibility of group transfer factors]
\label{thm:gtf} 
Let
  $(G,H)_{/Z}$ be a definable reductive group and associated
  endoscopic group over a cocycle space $Z$.  
  There exists a constructible
  function $\Delta$ on the set of $G$-regular elements in
   a definable neighborhood $V$ of $1$ in
\[
H\times_Z  G\times_Z H\times_Z   G
\] 
and a natural number $m\in \ring{N}$ such that for every field $F\in
\op{Loc}_{m}$, every $z\in Z(F)$, and for every 
\[
(\gamma_H,\gamma_G,\bar \gamma_H,\bar \gamma_G)\in V^{G-\reg}_z(F),\] the
Langlands-Shelstad transfer factor for the group $G_z(F)$ restricted to
$V_z(F)$ is $\Delta_F(\gamma_H,\gamma_G,\bar \gamma_H,\bar
\gamma_G)$.
\end{theorem}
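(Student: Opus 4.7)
The plan is to imitate the proof of Theorem~\ref{thm:xfer-factor} in the group, using a definable version of the exponential to identify a neighborhood of $0$ in the Lie algebra with a neighborhood of $1$ in the group, and then to invoke the Langlands--Shelstad descent identifying the group transfer factor near the identity with its Lie algebra analogue.

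First I would fix the neighborhood. For $m$ large compared to the fixed root datum, there is a definable subset $V_\fg\subset\fg$ of sufficiently topologically nilpotent elements on which the truncated exponential $\exp(X)=\sum_{n=0}^{N}X^n/n!$, with $N$ depending only on the root datum, is defined: for any $F\in\op{Loc}_m$ the denominators $n!$ are units, so this truncation agrees with the usual $p$-adic exponential on $V_\fg(F)$ and takes values in a definable neighborhood $V_G\subset G$ of $1$, with definable inverse $\log\colon V_G\to V_\fg$. The analogous construction for $H$ yields $V_\fh\subset\fh$ and $V_H\subset H$, and taking two copies for $H$ and two for $G$ produces a definable bijection between the $G$-regular part of $V_\fh\times_Z V_\fg\times_Z V_\fh\times_Z V_\fg$ and a $G$-regular neighborhood $V^{G-\reg}$ of $1$ in $H\times_Z G\times_Z H\times_Z G$.

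Next I would appeal to the Langlands--Shelstad comparison~\cite{LSxf}, which on a small enough neighborhood identifies the group transfer factor at $(\exp X_H,\exp X_G,\exp\bar X_H,\exp\bar X_G)$ with the Lie algebra transfer factor at $(X_H,X_G,\bar X_H,\bar X_G)$. Pulling back the constructible function $\Delta$ of Theorem~\ref{thm:xfer-factor} under the definable map
\[
(\gamma_H,\gamma_G,\bar\gamma_H,\bar\gamma_G)\mapsto(\log\gamma_H,\log\gamma_G,\log\bar\gamma_H,\log\bar\gamma_G)
\]
then produces the desired constructible function on $V^{G-\reg}$, and the required $m$ is the maximum of the $m$ from Theorem~\ref{thm:xfer-factor} and the $m$ needed for definability of $\exp$ and $\log$.

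The main obstacle is precisely the group/Lie algebra comparison of transfer factors. That the truncation $N$ can be chosen uniformly so that the truncated and genuine exponentials agree on a definable neighborhood is a delicate but routine large-residual-characteristic argument. More serious is checking that the above comparison holds on the chosen neighborhood without any surviving multiplicative character correction: this is precisely why the statement is restricted to a neighborhood of $1$, given the flagged absence of a motivic theory of multiplicative characters, and it demands a careful accounting of the $a$-data, $\chi$-data, and discriminant factors used in the proof of Theorem~\ref{thm:xfer-factor} so that they can be reused on the group side. Once these compatibilities are secured, definability of the $G$-regularity condition and of the intersections with $V_\fg,V_\fh$ is formal.
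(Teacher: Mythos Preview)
Your approach differs from the paper's in a substantive way. You propose to pull back the already-constructed Lie algebra transfer factor along a definable truncated exponential and then invoke a Langlands--Shelstad comparison asserting that the group transfer factor near $1$ equals the Lie algebra transfer factor under $\exp$. The paper does not do this: it reruns the entire construction of Section~\ref{sec:lsxfer} directly on the group, replacing each Lie algebra ingredient by its group analogue. Concretely, the $a$-data $a_\alpha=\alpha(X)$ become $a_\alpha=\alpha(\gamma)^{1/2}-\alpha(\gamma)^{-1/2}$ (square roots existing on topologically unipotent elements in large residual characteristic); the Kostant section is replaced by the Steinberg section; and the multiplicative-character factor $\Delta_{\rom{3}_2}$ is set to $1$ because the characters in question are trivial near the identity. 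The cocycle constructions $\lambda(T_{\text{sc}})$, $\op{inv}$, and the Tate--Nakayama pairing go through verbatim on the group side, so one obtains a constructible $\Delta$ on a definable neighborhood of $1$ without ever passing through $\exp$.

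The risk in your route is exactly the comparison you flag. It is not literally true that the group transfer factor at $\exp X$ equals the Lie algebra transfer factor at $X$: the two constructions use different $a$-data (so $\Delta_{\rom{2}}$ behaves differently), and the discriminant factors $\Delta_{\rom{4}}$ differ by the familiar Jacobian $\prod|\alpha(\gamma)^{1/2}-\alpha(\gamma)^{-1/2}|/|\alpha(X)|$. A correct version of your argument would have to track these corrections and show that the resulting modification is itself constructible on the chosen neighborhood---which is doable, but amounts to redoing most of the group-side bookkeeping anyway. The paper's approach buys you a cleaner argument by sidestepping the $\exp$-comparison altogether; your approach would buy a literal reduction to Theorem~\ref{thm:xfer-factor}, but only after a nontrivial correction that you have not yet supplied.
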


Based on the the constructibility of Shalika germs and the transfer
factor, we obtain a constructible function representing the Shalika
germs of $\kappa$-orbital integrals (Equation~\ref{eqn:kappa}).  As a
special case, we obtain the constructibility of stable Shalika germs
(Equation~\ref{eqn:stable}).  These constructions work both in the Lie
algebra and in the group.  The following theorem is proved in
Section~\ref{sec:adapt}.  An alternative version of the theorem for
Lie algebras is proved in Section~\ref{sec:emsg}.

\begin{theorem}[local endoscopic matching of orbital integrals]\label{thm:local}
  Let $(G,H)_{/Z}$ be a definable reductive group $G$ with definable
  endoscopic group $H$ over a cocycle space $Z$.  There exists a
  natural number $m\in \ring{N}$ such that for all $F\in
  \op{Loc}_{m}$, and all $z\in Z(F)$, the local endoscopic matching of
  orbital integrals holds for $(G_z,H_z)$.  That is, for all $f\in
  C_c^\infty(G_z(F))$, there exists a function $f^H\in C_c^\infty(H_z(F))$ for
  which the $\kappa$-orbital integrals of $f$ (with the usual
  Langlands-Shelstad transfer factor) are equal to the stable orbital
  integrals of $f^H$ near $1$. More precisely,
\[
SO(\gamma_H,f^H) = \sum_{\gamma_G} \Delta_F(\gamma_H,\gamma_G,\bar
\gamma_H,\bar \gamma_G) O(\gamma_G,f),
\]
for all $\gamma_H\in H^{G-\reg}_z(F)$ near $1$.
\end{theorem}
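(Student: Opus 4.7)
The plan is to convert endoscopic matching near the identity into a finite linear-dependency statement about Shalika germs, and then transfer that statement from characteristic zero (where Waldspurger's theorem applies) to sufficiently large residual characteristic via the Cluckers-Loeser framework. Near $1$, each orbital integral admits a Shalika germ expansion
\[
O(\gamma_G,f) \;=\; \sum_i \Gamma_{N_i}(\gamma_G)\, O(N_i,f),
\]
where $N_i$ runs over a complete list of unipotent orbits in $G_z(F)$ (enumerated by $\UF^{k}_{G_z}(F)$) and $O(N_i,f)$ is the corresponding unipotent orbital integral. Substituting into the claimed identity gives
\[
\sum_{\gamma_G} \Delta_F(\gamma_H,\gamma_G,\bar\gamma_H,\bar\gamma_G)\, O(\gamma_G,f) \;=\; \sum_i \Bigl(\sum_{\gamma_G} \Delta_F \cdot \Gamma_{N_i}(\gamma_G)\Bigr)\, O(N_i,f),
\]
while $SO(\gamma_H,f^H) = \sum_j \Gamma^{\mathrm{st}}_{N^H_j}(\gamma_H)\, SO(N^H_j, f^H)$ on the $H$-side. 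Matching near $1$ therefore decomposes into (i) an identity expressing each ``endoscopic Shalika germ'' $\sum_{\gamma_G} \Delta_F \cdot \Gamma_{N_i}$ as a fixed linear combination of the stable Shalika germs $\Gamma^{\mathrm{st}}_{N^H_j}$ on $H$, and (ii) the realizability, for any prescribed values, of those stable unipotent orbital integrals by some $f^H \in C_c^\infty(H_z(F))$.

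Step (ii) is routine: the stable unipotent orbital integrals form a finite-dimensional space of functionals on $C_c^\infty(H_z(F))$, and standard density arguments produce $f^H$ realizing any prescribed set of values, with support and smoothness bounded uniformly (in fact constructibly). The content is thus (i). By Theorems~\ref{thm:xfer-factor} and~\ref{thm:gtf}, the transfer factor $\Delta$ is constructible; by Theorem~\ref{thm:lie-shalika} and its group counterpart, the individual Shalika germs $\Gamma_{N_i}$ and their stable analogues on $H$ are constructible. The definable indexing sets $\UF^{k}_{G_z}$ and $\UF^{k'}_{H_z}$ have bounded cardinality (controlled by the constructible counting functions $d_k$), so (i) amounts to a finite family of linear identities among constructible functions on the definable set $V$ of Theorem~\ref{thm:gtf}, with coefficients that are themselves motivic invariants of the two nilpotent cones.

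To conclude, in characteristic zero and sufficiently large residual characteristic, Ng\^o's fundamental lemma \cite{ngo2010lemme} together with Waldspurger's theorem \cite{waldspurger1997lemme} supply the local endoscopic matching for $(G_z,H_z)$; the reduction above then forces each of the finitely many constructible linear identities in (i) to hold over every such field. The transfer principle for linear dependencies \cite{CGH2} propagates each such identity from the characteristic-zero side to every nonarchimedean field with the same residue field and sufficiently large residual characteristic, including those of positive characteristic. Taking a single $m$ that works for all finitely many identities simultaneously, and combining with the realizability step (ii), produces the required $f^H$. The main obstacle is precisely the first step: recasting the existential assertion ``for every $f$ there exists $f^H$'', a quantifier over the infinite-dimensional space $C_c^\infty$, as a finite list of identities between constructible functions, which is only possible because of the constructibility of $\Delta$ and of the individual Shalika germs established in the preceding theorems. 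Without that reduction, the statement is not in a form to which \cite{CGH2} can be applied.
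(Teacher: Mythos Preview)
Your reduction to Shalika germs is correct in spirit, and the paper follows the same outline. The gap is in the last step, where you invoke \cite{CGH2} to ``propagate each such identity.'' The transfer principle for linear dependencies (Theorem~\ref{thm:cgh}, and its asymptotic variant Theorem~\ref{thm:cgh-asymp}) transfers only the statement that a tuple of constructible functions is linearly \emph{dependent} for every parameter $\lambda$; it does not transfer a specific linear identity with prescribed coefficients, and in particular it does not tell you which coefficient in the transferred relation is nonzero. Over $F_2$ you therefore know that the tuple consisting of the $\kappa$-germ together with the stable germs on $H$ is dependent, but you do not yet know that the coefficient of the $\kappa$-germ is nonzero---the relation could in principle live entirely among the stable germs. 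Your remark that the coefficients are ``themselves motivic invariants of the two nilpotent cones'' is not justified: those coefficients come out of Waldspurger's transcendental argument and are not given to you as constructible functions.

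The paper's Strategy subsection (\S\ref{sec:strategy}) addresses exactly this point and calls your approach the ``basic naive strategy.'' The obstruction is that the dependence statement must hold uniformly over the entire cocycle space $Z$, and different fibers $G_z$ can have different numbers of nilpotent orbits and different spans of stable germs; for a fixed choice of $k,k'$ the relation may degenerate (all coefficients zero, or a relation among the stable germs alone) on part of $Z$, yielding no information there. The paper's fix is to pass to an $r$-fold product over all isomorphism classes of $G_z$ appearing in the family, and to mix factors from $G$ and from $H$: on each factor one is allowed to take either a $\kappa$-germ or a stable germ on the endoscopic group. Because the stable Shalika germ of the regular nilpotent class is identically $1$ (hence nonzero), one can isolate a single $\kappa$-germ on one factor while guaranteeing the remaining factors do not vanish, and then use elementary tensor-product linear algebra (Lemma~\ref{thm:tensor}) to extract the desired relation on that factor with the $\kappa$-germ coefficient provably nonzero. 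This nonvanishing argument, together with a back-and-forth application of Theorem~\ref{thm:cgh-asymp} to rule out the degenerate case, is the missing ingredient in your proposal.
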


Our wording in Theorem~\ref{thm:local} is based on definition of {\it
  local $\Delta$-matching\footnote{It is called local $\Delta$-transfer in
    \cite{LSd}.} at the identity} in \cite{LSd}.  It can be
expressed equivalently in terms of Shalika germs.  The wording in
Theorem~\ref{thm:xfer} is based on the definition of {\it
  $\Delta$-matching} in \cite{LSd}.  Recall that $\Delta$-matching
(and the definition of the Langlands-Shelstad transfer factor away
from $1$) requires a choice of central extension $\tilde H$ of the
endoscopic group $H$ and a character $\theta$ on the center of
$\tilde H(F)$ \cite{LSxf}*{\S 4.4}.  The following theorem  is our main
result.  The proof appears in Section~\ref{sec:xfer}.

\begin{theorem}[endoscopic matching of orbital integrals]\label{thm:xfer}
  Let $(G,H)_{/Z}$ be a definable reductive group $G$ with definable
  endoscopic group $H$ over a cocycle space $Z$.  There exists a
  natural number $m\in \ring{N}$ such that for all $F\in
  \op{Loc}_{m}$, and all $z\in Z(F)$, the endoscopic matching of
  orbital integrals holds for $(G_z,H_z)$.  That is, for all $f\in
  C_c^\infty(G_z(F))$, there exists a $f^{\tilde H}\in
  C_c^\infty(\tilde H_z(F),\theta)$ for which the $\kappa$-orbital
  integrals of $f$ (with the usual Langlands-Shelstad transfer factor)
  are equal to the stable orbital integrals of $f^{\tilde H}$. More
  precisely,
\[
SO(\gamma_{\tilde H},f^{\tilde H}) = \sum_{\gamma_G}
\Delta_F(\gamma_{\tilde H},\gamma_G,\bar
\gamma_{\tilde H},\bar \gamma_G) O(\gamma_G,f),
\]
for all $\gamma_{\tilde H}\in {\tilde H}^{G-\reg}_z(F)$.
\end{theorem}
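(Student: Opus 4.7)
The plan is to deduce the global endoscopic matching from the local version Theorem~\ref{thm:local} by the standard Langlands-Shelstad descent argument, with uniformity over $F \in \op{Loc}_m$ supplied by the motivic framework. The skeleton of the argument parallels Waldspurger's passage from local matching to global matching in characteristic zero \cite{waldspurger1997lemme}.

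First I would reduce the construction of $f^{\tilde H}$ to a local problem at each semisimple element. Given $f\in C_c^\infty(G_z(F))$, its compact support meets only finitely many semisimple conjugacy classes up to a small neighborhood, so a partition of unity lets me write $f = \sum_\sigma f_\sigma$ with each $f_\sigma$ supported near the conjugacy class of a semisimple $\sigma \in G_z(F)$. It then suffices to construct, for each $\sigma$ and each semisimple $\sigma_H \in \tilde H_z(F)$ that is a norm of $\sigma$, a matching function $f^{\tilde H}_{\sigma_H}$ supported in a small neighborhood of $\sigma_H$; summing the $f^{\tilde H}_{\sigma_H}$ produces $f^{\tilde H}$.

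Next, for each such pair $(\sigma_H, \sigma)$, I would invoke the Langlands-Shelstad descent of transfer factors \cite{LSxf}: on a small neighborhood of $(\sigma_H, \sigma)$, the transfer factor $\Delta_F$ factors as the product of a constant depending only on $(\sigma_H, \sigma)$ and the transfer factor of the centralizer endoscopic pair $(G_{z,\sigma}, H_{\sigma_H})$ at arguments near the identity. The centralizer $G_{z,\sigma}$ is again a connected reductive group with $H_{\sigma_H}$ as an endoscopic group. The key step is to organize these centralizer pairs, as $\sigma$ varies, into a single definable reductive pair over an enlarged cocycle space built from $Z$ and the Steinberg quotient, using the definability of the centralizer group scheme \cite{CGH}. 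Applying Theorem~\ref{thm:local} to this enlarged family yields, in sufficiently large residual characteristic, a function on $H_{\sigma_H}$ near $1$ matching the descent of $f_\sigma$; lifting this back to a neighborhood of $\sigma_H$ in $\tilde H_z(F)$, using the central extension $\tilde H$ and the character $\theta$ as in \cite{LSxf}*{\S 4.4}, produces the required $f^{\tilde H}_{\sigma_H}$. Checking the matching identity at a point $\gamma_{\tilde H}$ near some $\sigma_H$ then reduces, via descent of transfer factors, to the local identity near $1$ in $H_{\sigma_H}$ supplied by Theorem~\ref{thm:local}.

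The main obstacle will be packaging all descents uniformly. Theorem~\ref{thm:local} supplies a single $m$ per definable reductive pair, whereas the descent produces a family of centralizer pairs parameterized by semisimple elements in the support of $f$. The resolution is to assemble this family as one definable pair over a larger cocycle space, so that a single application of Theorem~\ref{thm:local} yields a uniform bound $m$ valid for all $F \in \op{Loc}_m$, all $z \in Z(F)$, all $\sigma$, and all $\sigma_H$. The compatibility of the lifting from $H_{\sigma_H}$ to $\tilde H$ with the character $\theta$ and the orbit-parameterization $\gamma_G \mapsto \bar\gamma_G$ in the sum on the right-hand side also requires care, but is first-order in nature and thus fits within the same motivic framework.
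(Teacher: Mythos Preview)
Your overall strategy is the same as the paper's: reduce global matching to local matching at centralizers via Langlands--Shelstad descent, then invoke Theorem~\ref{thm:local}. However, you overcomplicate the uniformity step and misplace a citation.

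First, the descent from global to local matching is not Waldspurger's; it is the content of \cite{LSd}*{2.3.A}, which the paper quotes as a black box: if every centralizer pair $(G_{\epsilon_G},H_{\epsilon_H})$ admits local $\Delta_\epsilon$-matching, then $(G,H)$ admits $\Delta$-matching. You do not need to redo the partition-of-unity or lifting-to-$\tilde H$ arguments yourself; those are already inside \cite{LSd}. What you \emph{do} need, and what your proposal omits, is a verification that the results of \cite{LSd} remain valid in large positive characteristic. The paper devotes a paragraph to this, tracking where characteristic zero is used in \cite{LSd} (complete reducibility, Harish-Chandra descent via the submersion principle, a global argument for residue characteristic $2$) and citing \cite{adler-korman} and \cite{adler-debacker:mktheory} for the positive-characteristic replacements.

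Second, your plan to assemble all centralizer pairs into a single definable family over an enlarged cocycle space built from the Steinberg quotient is unnecessary. The paper instead invokes the structural fact (from \cite{LSd}*{\S2.2}) that up to conjugacy there are only \emph{finitely many} centralizer pairs $(G_{\epsilon_G},H_{\epsilon_H})$, and that this finiteness can be made uniform: there are finitely many definable reductive pairs such that, for every $F$ of large residual characteristic, every centralizer pair arising from descent is a specialization of one of them. One then applies Theorem~\ref{thm:local} separately to each of these finitely many definable pairs and takes the maximum of the resulting constants $m$. This bypasses entirely the ``main obstacle'' you identify, and is simpler than constructing a single enlarged definable family.
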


In a sense that can be made precise (the residual characteristic must
be large with respect to fixed choices of the field-independent data
such as the root data), every large residual characteristic reductive
group with endoscopic group can be represented as a fiber $(G_z,H_z)$
of a definable pair $(G,H)_{/Z}$, so that the theorem is comprehensive in
large residual characteristic.

We prove other theorems that we do not state in this introduction that
may be of interest.  We give a transfer principle for asymptotic
expansions (Theorem~\ref{thm:asym}) and prove a uniformity result for
the asymptotics as the nonarchimedean field varies
(Theorem~\ref{thm:uniform}).  We give a classification of definable
reductive groups in terms of fixed data
(Section~\ref{sec:classification}).  This classification departs in
noteworthy ways from the classification of reductive groups over
nonarchimedean fields.  This leads to the conclusion, for example,
that nonisomorphic unitary groups of the same rank over a
nonarchimedean field can have Shalika germs given by the same formula.
A final section lists some open problems
(Section~\ref{sec:open-problems}).

\section{Review of motivic integration in representation theory}

In this section we review basic constructs of motivic integration,
applied to representation theory and harmonic analysis, as developed
in \cite{CHL} and \cite{CGH}.  We assume familiarity with definable
sets and constructible functions, as developed in \cite{CL}.

We work in the Denef-Pas language, a three-sorted first-order
language.  The three sorts are called the valued field sort, the
residue field sort, and the value group sort.  The valued field sort
and residue field sort both contain the first-order language of rings,
and the value group sort contains the first-order language of ordered
groups.  There is function symbol $\op{ord}$ from the valued field
sort to the value group sort, and function symbol $\ac$ from the
valued field sort to the residue field sort, which are interpreted as
the valuation map and angular component map respectively.

By a {\it fixed choice}, we mean a fixed set that does not depend
on the the Denef-Pas language or its variables in any way.  Fixed
choices are assumed to be made at the outermost level, and
will sometimes be dropped from the notation.  Various formulas
in the Denef-Pas language will be constructed from fixed choices.

A free parameter (or simply parameter) refers to a collection of free
variables of the same sort in a formula in the Denef-Pas language,
ranging over a definable set.  A bound parameter is similar, except
that the variables are all bound by a contiguous block of existential
or a contiguous block of universal quantifiers.  By a parameter in
$\VF$ (for valued field), we mean a variable of valued field sort.

\subsection{Fixed choices}\label{sec:fixed}

We let $G^{**}$ be a split connected reductive group over $\ring{Q}$,
with Borel $B^{**}$ and Cartan $T^{**}\subset B^{**} \subset G^{**}$
all over $\ring{Q}$.  We let $(X^*,X_*,\Phi,\Phi^\vee)$ (characters,
cocharacters, roots, and coroots) be the root
datum for $G^{**}$,
all with respect to
$(B^{**},T^{**})$.  

We let $\Sigma$ be a (large) abstract finite group (that plays the
role of a Galois group), with a fixed enumeration
$1=\sigma_1,\ldots,\sigma_n$ of its elements, where $n =
\op{card}{\Sigma}$.
We assume that $\Sigma$ comes with a fixed short exact sequence
\[
1 \to \Sigma^t \to \Sigma \to \Sigma^{unr} \to 1,
\]
where $\Sigma^t$ (called the tame inertia) and $\Sigma^{unr}$ (called
the unramified quotient) are both assumed to be cyclic.  We fix a
generator $\op{qFr}$ of $\Sigma^{unr}$, called the quasi-Frobenius
element.

We choose an action of $\Sigma$ on the root datum stabilizing the set
of simple roots.  Specifically, we fix a homomorphism
\[
\rho_G:\Sigma\to \op{Out}(G^{**},B^{**},T^{**},\{X_\alpha\}),
\]
from $\Sigma$ to the group of automorphisms of $G^{**}$ fixing a
pinning, which we identify with the group of outer automorphisms of
$G^{**}$.  We use the action $\rho_G$ on the root datum to form the
$L$-group
\[
{}^LG = \hat G \rtimes \Sigma,
\]
where $\hat G$ is the complex Langlands dual of $G^{**}$.
We fix $(\hat T,\hat B)$ dual to $(T^{**},B^{**})$.    We assume
that the action of $\Sigma$ on $\hat G$ preserves a pinning
$(\hat T,\hat B,\{\hat X_\alpha\})$ of $\hat G$.  

We choose a semisimple element $\kappa\in \hat T^\Sigma$, with
connected centralizer $\hat H = C_{\hat G}(\kappa)^0$.  We let
$(X^*,X_*,\Phi_H,\Phi_H^\vee)$ be the root datum dual to that
of $\hat H$.  We choose as part of the endoscopic data,
a homomorphism $\rho_H:\Sigma \to C_{{}^LG}(\kappa)/\hat T$,
through which we construct
\[
{}^LH = \hat H \rtimes \Sigma.
\]
We assume that $\Sigma$ acts on $\hat H$ through automorphisms that
preserve some pinning of $\hat H$.  This choice induces an action of
$\Sigma$ on the root datum (giving two actions, $\rho_G$ and $\rho_H$,
of $\Sigma$ on $X^*$: one from $G$ and one from the endoscopic data).
Temporarily disregarding $\Sigma$, we let $H^{**}$ be the split group
over $\ring{Q}$ with root datum $(X^*,X_*,\Phi_H,\Phi_H^\vee)$.  We
fix a linear algebraic subgroup $\op{Aut}_0(G^{**})$ with finitely many
connected components of $\op{Aut}(G^{**})$ that contains the full
group of inner automorphisms and the image of $\Sigma$.

To treat the theory of reductive groups in a definable context, we fix
faithful rational representations of all split reductive groups (over
$\ring{Q}$) of interest: $G^{**}$, $G^{**}_{\text{sc}}$, $G^{**}_{\text{der}}$,
$G^{**}_{\text{adj}}$,
$\op{Aut}_0(G^{**})$, and even $(G^{**}_{\text{sc}}\times
G^{**}_{\text{sc}})/\op{Zent}_{\text{sc}}$, with diagonal embedding of the center
$\op{Zent}_{\text{sc}}$ of the simply connected cover of the derived group of
$G^{**}$.  We do the same for $H^{**}$ and its affiliated groups.  We
fix the obvious morphisms ($G^{**}\to G^{**}_{\text{adj}}$, and so forth)
between affiliated groups by explicit polynomial maps (expressed as
collections of polynomials with rational coefficients).  The rational
representations of the reductive groups give
representations of all of the associated Lie algebras.

The root datum and pinning for $G^{**}$ give a Dynkin diagram
$\op{Dyn}(\Phi^+(G^{**}))$ whose nodes are the simple roots of
$(G^{**},B^{**},T^{**})$.  The group $\Sigma$ acts on the Dynkin
diagram by permutation of the nodes.  Let $\tau$ be an index running
over the set of connected components of the Dynkin diagram.  For each
connected component $\op{Dyn}_\tau$ of the diagram, we consider the
subgroup $\Sigma_\tau$ stabilizing the component, and the kernel
$\Sigma^0_\tau\subset \Sigma_\tau$ of the action on the component.  We
let $\ell_\tau$ be the number of connected components in the $\Sigma$
orbit of $\tau$.  We let $d_\tau = [\Sigma_\tau:\Sigma_\tau^0]$ be the
index.  We add the prefix superscript $d_\tau$ to the symbol for the
connected Dynkin diagram in the usual way: ${}^1A_n$, ${}^2A_n$, and
so forth.

Following \cites{reeder2010torsion, Gross}, for each $\tau$, we
form an affine diagram ${}^e\R_\tau$ attached to the tame inertia
group $\Sigma^t_\tau = \Sigma^t\cap \Sigma_\tau$.  Its set of nodes is
$\{0\}\cup I_\tau$, where $I_\tau$ is the set of orbits of simple
roots of $\op{Dyn}_\tau$ under the action of $\Sigma^t_\tau$.  The
element $\{0\}$ represents an extended node.  The superscript prefix
$e$ is the order $[\Sigma^t_\tau:\Sigma^t_\tau\cap \Sigma_\tau^0]$ of
the group of tame inertial automorphisms of $\op{Dyn}_\tau$.  We drop
superscript $e$ from the notation when $e=1$.  The various connected
affine diagrams ${}^e\R = \cal{A}_n, {}^2\cal{A}_n, \cal{B}_n,
\cal{C}_n,\ldots, \cal{E}_8$ are listed in a table in \cite{Gross}.

We set $\Sigma^{unr}_\tau = \Sigma_\tau/\Sigma_\tau^0\Sigma^t_\tau$.
For each $\tau$, we choose an action
\[
\phi_\tau:\Sigma^{unr}_\tau\to \op{Aut}({}^e\R_\tau)
\]
of $\Sigma^{unr}_\tau$ on the affine diagram.  When $e\ne 1$, by
inspection of the various affine diagrams we see that the order of the
automorphism group is at most $2$, so that the choice of $\phi_\tau$
amounts to at most a binary choice of whether the action is trivial or
nontrivial.  When $e=1$, the situation is only slightly more
involved.

From each orbit of $\Sigma$ on the set of components of the Dynkin
diagram $\op{Dyn}(\Phi^+(G^{**}))$ we choose a representative
$\tau$.  Let $A = \{\tau\}_\tau$ be this set of representatives.  Let
$S$ (depending on all the choices above) be the product
\[
S = \prod_{\tau\in A} \op{node}({}^e\R_\tau)/\phi_\tau,
\]
of the $\phi_\tau$-orbits of nodes in each affine diagram.  Let $\cF
= \cF(G^{**},\Sigma,\rho_G,\phi_\tau)$ be the set of all
subsets $S'$ of $S$ such that the projection of $S'$ to each factor is
nonempty.  We call $\cF$ the parahoric indexing set.

The next subsection will give various parameter spaces that admit
interpretations in nonarchimedean structures.  When the fixed data
becomes associated with a reductive group $G$ over a nonarchimedean
field $F$, we either get nothing (if our data does not satisfy
required compatibility requirements) or we get the actual indexing set
for the parahoric subgroups in that reductive group.  This follows
directly from the explicit description of parahorics in \cite{Gross},
which is the starting point for our definition of $\cF$.  Parameters
$\s\in \cF$ may be identified with barycentric centers of facets in a
standard alcove in a suitable apartment of the Bruhat-Tits building.
Groups and algebras in the Moy-Prasad filtration may be associated
with these points in the building.  By \cite{CGH}, for each parameter
$\s\in \cF$, there are definable sets $G_{\s,0+}$, $\fg_{\s,0}$,
$\fg_{\s,0+}$ the pro-unipotent radical of the parahoric subgroup, a
parahoric subalgebra, and one of its subalgebras.

\subsection{Parameter spaces}

We recall our conventions for handling Galois cohomology and the
theory of reductive groups in the Denef-Pas language.  Further details
about these constructions can be found in \cite{CHL} and \cite{CGH}.
We describe various parameter spaces (or simply spaces for short) with
variables of the valued field sort.

\subsubsection{Field extensions}

The parameter space of field extensions\footnote{When we work with
  field extensions, we will write $E/\VF$, rather than the more
  cumbersome $\op{VE}/\VF$, etc.} $E/\VF$ of a fixed degree $n$ is defined
to be the parameter space of $(a_0,\ldots,a_{n-1})\in \VF^n$ (that is,
$n$ variables of the valued field sort) such that
\[
p_a(t) = t^n + a_{n-1} t^{n-1} + \cdots a_0
\]
is an irreducible polynomial.  Field arithmetic is expressed in terms
of operations on $\VF^n$ by means of the identifications
\[
E = \VF[t]/(p_a(t)) \simeq \VF^n.
\]

The space of automorphisms of $E/\VF$ consists of linear maps (brought
back to arithmetic on $\op{End}(E) = \op{End}{\VF^n} = \VF^{n^2}$)
respecting the field operations.  The space of Galois field extensions
$E/\VF$ of fixed degree $n$ is definable by the condition that there
exist $n$ distinct automorphisms of the field $E/\VF$.  There is a
space of Galois field extensions of fixed degree $n$, with an
enumeration of its automorphisms, given by tuples
\begin{equation}\label{eqn:enum-gal}
(E,\sigma_1,\sigma_2,\ldots,\sigma_n),
\end{equation}
where $E/\VF$ is a Galois field extension and $\sigma_i$ are the
distinct field automorphisms.  For fixed abstract group $\Sigma$ of
order $n$ with full enumeration $\sigma'_i\in \Sigma$, for
$i=1,\ldots,n$, there is a space of Galois field extensions with
Galois groups isomorphic to $\Sigma$ given by tuples in Formula
\ref{eqn:enum-gal} with the additional isomorphism requirement that
\[
\sigma_i\sigma_j = \sigma_k \Longleftrightarrow \sigma'_i\sigma'_j =
\sigma'_k.
\]

The space of unramified field extensions $E/\VF$ of a fixed degree $n$
is a definable subspace of the space of all field extensions.  There
is a space of pairs $(E,\op{qFr})$ where $E$ is an unramified extension
and $\op{qFr}$ is a fixed generator of the Galois group of $E/\VF$.  

There is a space of field extensions $K/E/\VF$, for fixed degrees
$E/\VF$ and $K/\VF$.  It is specified by irreducible polynomials
$p$ and $q$ for $E = \VF[t]/(p(t))$ and $K = \VF[t']/(q(t'))$, and an
element $t''\in \VF[t']/(q(t'))$ (the image of $t$ under $E\to K$) such
that $p(t'') = 0$.  There is a space of Galois field extensions
$K/E/\VF$ for fixed degrees for $E/\VF$ and $K/\VF$ with enumerations
of the Galois groups of $K/\VF$ and $K/E$, together with a table
describing the homomorphism $\op{Gal}(K/\VF)$ to $\op{Gal}(E/\VF)$.

If we have a short exact sequence of enumerated groups 
\[
1 \to \Sigma^t \to \Sigma \to \Sigma^{unr}\to 1,
\]
there is a space of Galois field extensions $K/E/\VF$ with enumerated
automorphisms $\sigma_1,\ldots,\sigma_n$, such that the automorphisms in
$\Sigma^t$ act trivially on $E$, $K/E$ is totally ramified with Galois
group $\Sigma^t$, $K/\VF$ has Galois group $\Sigma$, and $E/\VF$ is
unramified with Galois group $\Sigma^{unr}$.

\subsubsection{Galois cocycles}

The space of Galois cocycles is given as tuples
\[
(K,\sigma_1,\ldots,\sigma_n,a_1,\ldots,a_n),
\]
where $a_i \in K^k$ for some $k$.  Here $K/\VF$ is a Galois extension of
fixed degre $n$, with enumerated Galois group $\sigma_i$ subject to
the cocycle relations:
\[
\sigma_i \sigma_j 
  = \sigma_k \Longrightarrow a_i \sigma_i(a_j) 
  = a_k\quad\text{for all } i,j,k.
\]

For fixed choices of $G^{**}$, action of $\Sigma$ on the root datum,
there is a space $(K,\sigma_1,\ldots,a_1,\ldots,G^*)$ of
quasi-split reductive groups split over $K$ with enumerated
isomorphism between the Galois group of $K/\VF$ and $\Sigma$.  Here
the elements $a_i$ enumerate the cocycle in the outer
automorphism group (identified with automorphisms fixing a pinning
$(B^{**},T^{**},\{X_\alpha\})$) defining the quasi-split form.  The
pair $(B^{**},T^{**})$ gives a pair $(B^*,T^*)$ in the quasi-split
form.

$G$ is a definable family of reductive groups parametrized by a
definable cocycle space $Z$, which is constructed as in \cite{CGH}.
We include in the space $Z$ an explicit choice of inner twisting
\[
\psi:G
\times_\VF K \to G^*\times_\VF K
\] to the quasi-split inner form that agrees with an enumerated
cocycle $\sigma_i(\psi) \psi^{-1}$ with values in $G^*_{\text{adj}}(K)$, also
given as part of the data of $Z$.  In fact, we identify $G$ with $G^*$
over $K$, so that $\psi$ is the identity, but it is retained in
notation in the form of a cocycle $\sigma(\psi)\psi^{-1}$.  

Let $\fc_{\fg^{*}} = \fg/\!/G$ be Chevalley's adjoint quotient, which
we identify with the space of ``characteristic polynomials'' $\fg\to
\fc_{\fg^{*}}$.  The fiber in $\fg^\reg$ over each element of
$\fc_{\fg^{*}}$ is the definable stable conjugacy class with the given
characteristic polynomial.  Note that $\fc_{\fg^{*}}$ depends only on
the quasi-split form $G^*$. Similarly, we have a map $G\to \fc_{G^*}:=
(T^{**}/W)^*$, where $T^{**}$ is a maximally split Cartan subgroup of
the split form $G^{**}$ of $G$, and $W$ is the absolute Weyl group for
$T^{**}$ in $G^*$.  By $\fc_{G^*}$ we mean the twisted form of
$T^{**}/W$ obtained from the outer automorphisms of $T^{**}$ used to
define $G^*$.  That is, the Galois group $\op{Gal}(K/\VF)$ acts so
that $\VF$-points are
\[
t =  \phi_\sigma(\sigma(t))\mod W, \quad\forall \sigma\in\op{Gal}(K/\VF),
\]
where $t\mapsto \sigma(t)$ is standard action of the Galois
group on $T^{**}(K)$, and $\phi_\sigma$ is the outer automorphism
associated with $\sigma$.

If $F$ is a nonarchimedean field, and $z\in Z(F)$, we have the fiber
$G_z$.  It is a connected reductive linear algebraic group with Lie
algebra $\fg_z$.  The map $\fg_z\to \fc_{\fg^{*},z}$ is the usual map
from the Lie algebra to the space of characteristic polynomials.

\subsubsection{Endoscopic data}

We may similarly use the endoscopic data from our fixed choices to form
a space of quasi-split endoscopic data for $H$, again split by $K/\VF$.  
We let $\fg$ and $\fh$ be the Lie algebras of $G$ and $H$.

We
similarly construct a cocycle space for $H$ and take the fiber product
$Z$ of the cocycle spaces for $G$ and $H$, again denoting it by $Z$,
by abuse of notation.  We also include in the parameters of
the cocycle space $Z$ a large unramified extension and a choice
$\op{qFr}$ of generator.
We also
include in $Z$ a parameter $b$ that trivializes an invariant
differential form of top degree, as explained in Section~\ref{sec:volume}.

\subsubsection{Nilpotent elements}\label{sec:nilpotent}

The properties of nilpotent elements are well-known.  Here we
summarize the properties that we use.

\begin{theorem} 
  Let $G_{/Z}$ be a definable reductive group with Lie algebra
  $\fg_{/Z}$ over a cocycle space $Z$.  There exists $m\in\ring{N}$
  such that for all $F\in\op{Loc}_m$, and all $z\in Z(F)$, the
  following properties are equivalent properties of $N\in \fg_z(F)$:
\begin{enumerate}
\item $\rho(N)$ is nilpotent for {\it some} faithful rational
  representation $\rho$ of $\fg_z$;
\item $\rho(N)$ is nilpotent for {\it every} faithful rational
  representation $\rho$ of $\fg_z$;
\item $0$ lies in the Zariski closure of the adjoint orbit of $N$;
\item $0$ lies in the $p$-adic closure of the adjoint orbit of $N$;
\item there exists $\lambda\in X_*^F(G_z)$ such that $\lim_{t\to 0}
  \op{Ad}(\lambda(t))N = 0$; and
\item the image of $N$ under the morphism $\fg_z\mapsto \fc_{\fg^*_z}$
  is $0$.
\end{enumerate}
\end{theorem}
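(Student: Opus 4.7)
The plan is to establish a chain of implications covering all six conditions, several of which are formal and several of which require classical Lie-theoretic facts that hold once $p$ exceeds a bound depending only on the root datum of $G^{**}$. The trivial implications are $(2) \Rightarrow (1)$; $(4) \Rightarrow (3)$, since the $p$-adic closure of the adjoint orbit is contained in its Zariski closure; and $(5) \Rightarrow (4)$, by taking any sequence $t_n \in F$ with $\ord(t_n) \to \infty$ and observing that $\op{Ad}(\lambda(t_n))N$ lies in the $F$-orbit of $N$ and converges $p$-adically to $0$. Thus it suffices to prove $(1) \Leftrightarrow (2)$, $(1) \Leftrightarrow (3) \Leftrightarrow (6)$, and $(3) \Rightarrow (5)$.

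For $(1) \Leftrightarrow (2)$, I would invoke the intrinsic Jordan decomposition $N = N_s + N_n$ in the Lie algebra $\fg_z$, which exists and is unique provided $p$ exceeds a threshold determined by the root datum (for instance, once $p$ is larger than the torsion primes of $G^{**}$). Any faithful rational representation $\rho$ of $\fg_z$ carries this decomposition to the matrix Jordan decomposition of $\rho(N)$, so nilpotency of $\rho(N)$ is equivalent to $N_s = 0$, a condition on $N$ independent of $\rho$.

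For $(1) \Leftrightarrow (3) \Leftrightarrow (6)$, I would use the standard geometric picture coming from Chevalley's adjoint quotient $\pi \colon \fg \to \fc_{\fg^*}$, which separates closed $G$-orbits and encodes the coefficients of the characteristic polynomial of $\op{ad}(N)$ (and equivalently, up to a polynomial identification fixed in Section~\ref{sec:fixed}, of $\rho(N)$ for any faithful rational representation, when $p$ is large). Nilpotency under $\rho$ is equivalent to the characteristic polynomial being a power of $t$, i.e.\ to $\pi(N) = \pi(0)$, giving $(1) \Leftrightarrow (6)$. The equivalence $(3) \Leftrightarrow (6)$ then follows because $\{0\}$ is the unique closed $G$-orbit in the fiber $\pi^{-1}(0)$ (the nilpotent cone), so $\pi(N) = 0$ is equivalent to $0$ lying in the Zariski closure of the orbit of $N$.

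The only step not immediate from classical GIT is $(3) \Rightarrow (5)$, the Hilbert-Mumford criterion with an \emph{$F$-rational} one-parameter subgroup. Here I would invoke the Kempf-Rousseau theorem: to each nonzero element of the nullcone one attaches a canonical parabolic $P(N)$ and a canonical class of destabilizing cocharacters, and by canonicity these are defined over the field of definition of $N$. Applied to $N \in \fg_z(F)$, this yields $\lambda \in X_*^F(G_z)$ with $\lim_{t\to 0} \op{Ad}(\lambda(t))N = 0$. The main obstacle is verifying that Kempf's optimality argument applies in our setting for all fibers $G_z$ uniformly once $p$ is sufficiently large, and that the resulting $\lambda$ lands in the $F$-rational cocharacter lattice of $G_z$ as opposed to merely that of a Galois extension; any small-prime pathology in Kempf-Rousseau is precisely what forces the constraint on the residual characteristic in the theorem.
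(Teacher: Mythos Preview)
Your outline is correct and matches the paper's proof in structure; the paper's argument is almost entirely a list of citations (Humphreys for $(1)\Leftrightarrow(2)$, DeBacker and Adler--DeBacker for $(5)\Leftrightarrow(4)\Rightarrow(3)$, McNinch for $(2)\Leftrightarrow(3)\Rightarrow(5)$, and a short direct argument for $(6)\Leftrightarrow(2)$ via conjugation into a Borel over the algebraic closure), whereas you sketch the underlying ideas (Jordan decomposition, the nilpotent cone as the zero fiber of the Chevalley quotient, destabilizing cocharacters). The content is the same.

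One point deserves care. For $(3)\Rightarrow(5)$ you invoke Kempf--Rousseau and locate the obstruction in ``small-prime pathology.'' The paper instead cites McNinch, and this is not merely a bibliographic preference: Kempf's original rationality statement assumes a \emph{perfect} base field, and the positive-characteristic members of $\op{Loc}_m$ such as $\ring{F}_q(\!(t)\!)$ are not perfect, so a direct appeal to Kempf--Rousseau does not obviously yield an $F$-rational $\lambda$. McNinch's work supplies exactly this --- an $F$-rational cocharacter associated to a nilpotent element over any field of good characteristic --- which is why the paper routes through it. So your identification of $(3)\Rightarrow(5)$ as the crux is right, but the obstacle is imperfectness of $F$ rather than smallness of $p$, and the fix is the McNinch reference rather than the bare Kempf--Rousseau theorem.
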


\begin{proof} All the implications mentioned in this proof should be
  understood as implications for the residual characteristic
  sufficiently large, with effective bound that depends only on the
  absolute root datum of $G$. Such bounds are stated in the sources we
  cite. The equivalence of the first two properties is in 
  \cite{humphreys1975linear}*{\S15.3}.  The implications (5)
  $\Leftrightarrow$ (4) $\Rightarrow$ (3) are in \cite{debacker:nilp},
  ~\cite{adler-debacker:bt-lie}*{2.5.1}.  The implications (2)
  $\Leftrightarrow$ (3) $\Rightarrow$ (5) are in \cite{mcninch}*{3.5},
  \cite{mcninch}*{4.1,Prop. 4 and Theorem 26}.

  To see the equivalence of (6) with say (2), we may work over an
  algebraically closed field. In particular, $\fg = \fg^{**}$.  Note
  that the image of $N$ is $0$ in $\fc_{\fg}$ if and only if the image
  of $N$ is $0$ in the torus $\fb/\fn$ (under conjugation to a Borel
  subalgebra), which holds if and only if the image of $N$ is in
  $\fn$, the nilradical of $\fb$.  This holds if and only if the
  semisimple part of $N$ is trivial, which is equivalent to (2).
\end{proof}

The last of the properties enumerated in the theorem is a definable
condition.  Thus, we have a definable set of all nilpotent elements in
$\fg$.  The theorem gives the compatibility of this definition with
notions of nilpotence in the various articles we cite.  The following
theorem appears in \cite{barbasch-moy}.

\begin{theorem}[Barbasch-Moy]\label{thm:bm} 
  Let $G_{/Z}$ and $\fg_{/Z}$ be as above.  There exists $m$ such that
  for all $F\in \op{Loc}_m$, $z\in Z(F)$, and for every nilpotent
  orbit in $\fg_z(F)$, there exists $\s\in \cF$ and a nilpotent
  element $N$ in the orbit such that
\begin{enumerate}
   \item $N\in \fg_{z,\s,0}$, and
   \item if $N'$ is nilpotent and $N'\in N + \fg_{z,\s,0+}$, then $N$
     lies in the $p$-adic closure of the orbit of $N'$.
\end{enumerate}
\end{theorem}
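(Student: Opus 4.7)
The plan is to follow Barbasch-Moy's original argument. For each nilpotent orbit $\mathcal{O} \subset \fg_z(F)$, a Jacobson-Morozov triple produces an $F$-rational cocharacter of $G_z$; this cocharacter picks out a facet of the Bruhat-Tits building of $G_z(F)$ and places the representative of $\mathcal{O}$ in general position relative to the parahoric attached to that facet. All thresholds on the residue characteristic are effective and depend only on the absolute root datum of $G$, so they combine into a single $m$ uniform in $F \in \op{Loc}_m$ and $z \in Z(F)$.

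\emph{Jacobson-Morozov.} Fix $N \in \mathcal{O}$. For residue characteristic exceeding a threshold depending only on the absolute root datum, Jacobson-Morozov produces an $F$-rational $\mathfrak{sl}_2$-triple $(H, N, N')$ with $[H, N] = 2N$. Exponentiating $H$ yields an $F$-rational cocharacter $\lambda : \mathbb{G}_m \to G_z$ satisfying $\op{Ad}(\lambda(t)) N = t^2 N$, and whose weight decomposition $\fg_z = \bigoplus_i \fg_z(\lambda; i)$ agrees with the $\op{ad}(H)$-eigenspace decomposition.

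\emph{Choice of facet.} The cocharacter $\lambda$ factors through some maximal $F$-split torus $T \subset G_z$. The point $x := (1/2)\lambda \in X_*(T) \otimes \ring{R}$ lies in the apartment of $\mathcal{B}(G_z, F)$ attached to $T$; let $\fF$ be the facet containing $x$ and let $\s \in \cF$ be its parahoric index (by Section~\ref{sec:fixed} and \cite{Gross}, $\cF$ enumerates facets once the residue characteristic is above a further threshold). Consequently $\fg_{z,\s,0}$ contains all $\lambda$-weight components of $\fg_z$ of nonnegative weight, while $\fg_{z,\s,0+}$ consists exclusively of strictly positive-weight elements.

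\emph{Verification of the two conditions.} Condition (1) is immediate because $N$ has $\lambda$-weight $2$. For condition (2), take $X \in \fg_{z,\s,0+}$ with $N + X$ nilpotent. The image $\bar N$ of $N$ in the residue-field Lie algebra $\fg_{z,\s,0}/\fg_{z,\s,0+}$, the Lie algebra of a reductive group $M$ over the residue field $k_F$, is a distinguished nilpotent in a Levi of $M$ cut out by the $\lambda$-grading. The image of $N + X$ in this quotient equals $\bar N$, so both lie in the same distinguished nilpotent orbit in $M(k_F)$. Since distinguished orbits are open in the nilpotent cone of their Levi, and the parahoric $G_{z,\s,0}(F)$ surjects smoothly onto $M(k_F)$, openness from the residue field lifts to the $p$-adic setting: a sequence of parahoric conjugates of $N + X$ converges $p$-adically to $N$, giving $N$ in the $p$-adic closure of the $G_z(F)$-orbit of $N+X$.

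The main obstacle is this last step: one must identify $\bar N$ as distinguished in a Levi of the reductive quotient $M$, and then lift orbital openness from the residue field through the smooth parahoric-quotient map to produce actual $p$-adic conjugates. Combining the $\mathfrak{sl}_2$-structure with the Moy-Prasad reductive-quotient formalism of \cite{CGH} accomplishes this. The threshold $m$ in the statement is the maximum of the bounds for Jacobson-Morozov, for the validity of $\cF$ as an enumeration of facets, and for the surjectivity of the parahoric reduction.
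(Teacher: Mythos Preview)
The paper does not give its own proof of this theorem: it is stated with the attribution ``The following theorem appears in \cite{barbasch-moy}'' and no further argument. So there is nothing in the paper to compare your proposal against beyond the citation. Your outline is in the right spirit---it is essentially the Barbasch--Moy/DeBacker strategy of using an $\mathfrak{sl}_2$-triple and the associated cocharacter to locate a suitable facet---but since the paper treats the result as a black box, a citation would suffice here.

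That said, your write-up contains a genuine slip that would need fixing if you wanted an actual proof. You write that $\fg_{z,\s,0}$ ``contains all $\lambda$-weight components of $\fg_z$ of nonnegative weight, while $\fg_{z,\s,0+}$ consists exclusively of strictly positive-weight elements.'' This cannot be literally true: the Moy--Prasad lattices $\fg_{z,\s,0}$ and $\fg_{z,\s,0+}$ are compact open $\mathcal{O}_F$-lattices, not $F$-vector subspaces, so they never contain an entire nonzero weight space, and $\fg_{z,\s,0+}$ meets every weight space nontrivially (just in a deeper lattice). What is true, and what the argument actually needs, is a statement about the \emph{reductive quotient} $\fg_{z,\s,0}/\fg_{z,\s,0+}$: the image of $N$ there lies in the piece corresponding to weight $2$ and is distinguished in the appropriate Levi. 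Your last paragraph gestures at this, but the earlier misstatement obscures what is really going on. The lifting step (``openness from the residue field lifts to the $p$-adic setting'') is also where the real work in \cite{barbasch-moy} and \cite{debacker:nilp} lies, and it deserves more than a sentence if you are not simply citing those sources.
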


We say that $\Y=(N,\s)$ is a Barbasch-Moy pair, with $\s\in \cF$, if $N$
is nilpotent and it satisfies the properties of the theorem.  For each
$\s\in \cF$, there is a definable set consisting of all nilpotent
elements $N$ such that $\Y=(N,\s)$ is a
Barbasch-Moy pair.

\begin{cor}\label{thm:nilbound}  Let $G_{/Z}$ and $\fg_{/Z}$ be as above.  
There exists a constant $k$ such that for all $F\in \op{Loc}_m$, $z\in
Z(F)$, the number of nilpotent conjugacy classes in $\fg_z(F)$ is at
most $k$.
\end{cor}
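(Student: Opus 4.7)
By Theorem~\ref{thm:bm}, for $m$ sufficiently large and any $F\in\op{Loc}_m$, $z\in Z(F)$, every nilpotent $G_z(F)$-orbit in $\fg_z(F)$ contains a representative $N$ belonging to a Barbasch-Moy pair $(N,\s)$ with $\s\in\cF$. The parahoric indexing set $\cF$ is finite and depends only on the fixed data (root datum, $\Sigma$, and the choices $\rho_G$ and $\phi_\tau$), and in particular not on $F$ or $z$. The statement therefore reduces to producing a uniform bound, as $F$ and $z$ vary, on the number of orbits admitting a Barbasch-Moy representative with a fixed value of $\s$.

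Fix $\s\in\cF$. The Moy-Prasad quotient $\bar\fg_{z,\s}:=\fg_{z,\s,0}/\fg_{z,\s,0+}$ is the Lie algebra of a reductive group over the residue field $k_F$, whose root datum and twist are dictated by $\s$ together with the cocycle data; the set of possible types is finite and independent of $F$ and $z$. Condition~(2) of Theorem~\ref{thm:bm} forces the image $\bar N$ of $N$ in $\bar\fg_{z,\s}$ to be a distinguished nilpotent, since any $\bar\fg_{z,\s}$-deformation of $\bar N$ into a proper Levi would contradict the minimality required for the orbit closure. In large residual characteristic the Bala-Carter classification applies to $\bar\fg_{z,\s}$ and gives a finite, type-only bound on the number of geometric distinguished nilpotent orbits.

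Finally, I would pass from the geometric picture to $G_z(F)$-orbits. Each geometric distinguished orbit splits into at most boundedly many $k_F$-rational orbits, with the bound controlled by the Galois cohomology of the component group of the centralizer and thus depending only on the type; and two Barbasch-Moy pairs with the same $\s$ whose images lie in the same rational orbit of $\bar\fg_{z,\s}(k_F)$ are $G_z(F)$-conjugate, by a standard Moy-Prasad lifting argument using smoothness of centralizers in large residual characteristic. Multiplying these three uniform bounds by $\op{card}(\cF)$ produces the constant $k$. The main obstacle I expect is the last step: verifying, with a residual characteristic threshold that depends only on the fixed data, that the lift from a distinguished rational orbit in $\bar\fg_{z,\s}(k_F)$ to a Barbasch-Moy pair in $\fg_z(F)$ is unique up to $G_z(F)$-conjugacy uniformly in $F$, which is precisely where the smooth-centralizer hypothesis is needed.
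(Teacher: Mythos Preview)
Your approach is correct and follows the same outline as the paper's: use Theorem~\ref{thm:bm} to reduce to counting nilpotent classes in the finite reductive quotients $\fg_{z,\s,0}/\fg_{z,\s,0+}$, then invoke a field-independent bound over finite fields. The paper's proof is two sentences: it asserts that the number of orbits is at most $\sum_{\s\in\cF} k_\s$ with $k_\s$ the number of nilpotent conjugacy classes in the finite quotient, and then cites Carter for a uniform bound on the $k_\s$.

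Two points of comparison. First, you refine to \emph{distinguished} nilpotent classes via Bala--Carter plus component groups; the paper simply bounds by \emph{all} nilpotent classes in the quotient and treats this as a black box from the finite-group literature. Your bound is sharper but the extra work is unnecessary for the corollary. Second, the step you flag as the ``main obstacle'' --- that two Barbasch--Moy pairs with the same $\s$ and conjugate reductions are $G_z(F)$-conjugate --- does not require smoothness of centralizers or a separate lifting argument. It drops out of property~(2) of Theorem~\ref{thm:bm} directly: lift a conjugating element from the finite quotient to the parahoric, so that $g\cdot N_1\in N_2+\fg_{z,\s,0+}$; then property~(2) forces $N_2$ into the closure of the orbit of $N_1$, and by symmetry also $N_1$ into the closure of the orbit of $N_2$, hence the orbits coincide. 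This is why the paper can pass over that step without comment.
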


\begin{proof} By the preceding theorem, for given $F$, the number of
  nilpotent classes is at most the sum of the numbers $k_\s$, where
  $k_\s$ is the number of nilpotent conjugacy classes in the finite
  reductive group $\fg_{z,\s,0}/\fg_{z,\s,0+}$.  Field-independent
  bounds on the number of nilpotent elements in a reductive
  group over a finite field are well-known \cite{carter1985finite}.
\end{proof}

We have corresponding versions of these theorems for the set of
unipotent elements.  We have a definable subset of $G$ consisting of
unipotent elements $u$, determined by the condition that the image of
$u$ under $G\to \fc_{G^*}$ is $1$.

\begin{theorem}\label{thm:bmg} 
  Let $G_{/Z}$ be as above.  There exists $m$ such that for all $F\in
  \op{Loc}_m$, $z\in Z(F)$, and for all unipotent conjugacy classes in
  $G_z(F)$, there exists $\s\in \cF$ and an element $u$ in the
  unipotent conjugacy class such that
\begin{enumerate}
   \item $u\in G_{z,\s,0}$, and
   \item if $u'$ is unipotent and $u'\in u G_{z,\s,0+}$, then $u$
     lies in the $p$-adic closure of the conjugacy class of $u'$.
\end{enumerate}
\end{theorem}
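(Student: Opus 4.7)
The plan is to reduce Theorem~\ref{thm:bmg} to its Lie algebra analogue (Theorem~\ref{thm:bm}) via a $G$-equivariant isomorphism $\exp\colon\cal{N}\to\cal{U}$ from the nilpotent variety to the unipotent variety. In residual characteristic large with respect to the fixed root datum of $G^{**}$, such an $\exp$ is realized by a Springer-type map, or concretely by a truncated exponential power series with universal rational coefficients whose denominators involve only primes below an explicit bound. Once $m$ is sufficiently large these coefficients are invertible in $\cal{O}_F$ and the series terminates on nilpotent elements, so that $\exp$ (and its inverse $\log$) is polynomial and therefore definable from the fixed data. By construction $\exp$ is $G$-equivariant and identifies nilpotent adjoint orbits bijectively with unipotent conjugacy classes.

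What is needed beyond this is compatibility with the Moy-Prasad filtration: for each $\s\in\cF$, the map $\exp$ restricts to a bijection $\cal{N}\cap\fg_{z,\s,0}\to\cal{U}\cap G_{z,\s,0}$, and for $N\in\cal{N}\cap\fg_{z,\s,0}$ induces a bijection between $\cal{N}\cap(N+\fg_{z,\s,0+})$ and $\cal{U}\cap \exp(N)\,G_{z,\s,0+}$. One also needs that $\exp$ is a homeomorphism in the $p$-adic topology, so that it carries $p$-adic closures of adjoint orbits to $p$-adic closures of conjugacy classes. Both facts follow, in large residual characteristic, from the polynomial form of $\exp$ combined with the standard exponential isomorphism between $\fg_{z,\s,0+}$ and the pro-unipotent radical $G_{z,\s,0+}$, as used in \cite{adler-debacker:bt-lie}.

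Given these tools the deduction is immediate. Fix a unipotent conjugacy class in $G_z(F)$, pick any representative $u_0$, set $N_0=\log(u_0)$, and apply Theorem~\ref{thm:bm} to the nilpotent orbit of $N_0$ to obtain $\s\in\cF$ and a representative $N$ satisfying (1) and (2) there. Put $u=\exp(N)$; by $G$-equivariance $u$ lies in the given unipotent conjugacy class, while compatibility with the filtration yields $u\in G_{z,\s,0}$. For the second condition, if $u'$ is unipotent with $u'\in u\,G_{z,\s,0+}$, then $u'=\exp(N')$ for some nilpotent $N'\in N+\fg_{z,\s,0+}$; Theorem~\ref{thm:bm}(2) places $N$ in the $p$-adic closure of the orbit of $N'$, and applying the continuous, equivariant map $\exp$ places $u$ in the $p$-adic closure of the conjugacy class of $u'$, as required.

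The main obstacle is constructing $\exp$ as a definable map, compatible with all the Moy-Prasad filtrations, uniformly in $z\in Z(F)$ and $F\in\op{Loc}_m$ for a single $m$ depending only on the fixed choices. For the split model $G^{**}$ this is classical; the twisted case over the cocycle space $Z$ is handled by first constructing $\exp$ over the large unramified extension built into $Z$ and then descending via Galois invariance, which is legitimate once $m$ is large enough that the coefficients of $\exp$ are Galois-equivariant and invertible. Verification of filtration compatibility is then a local computation within each parahoric, reducing to the Moy-Prasad exponential isomorphism in good residual characteristic.
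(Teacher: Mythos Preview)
Your overall strategy---pull the unipotent statement back to Theorem~\ref{thm:bm} via an equivariant exponential and then push orbit closures forward---is exactly the paper's. The execution, however, differs in a way worth noting. The paper does not attempt to build a definable $\exp$ that works uniformly in all large residual characteristics. Instead it first argues that conditions~(1) and~(2) are themselves first-order in the Denef-Pas language (this is nontrivial: $G_{z,\s,0}$ is not known to be definable, and the paper substitutes a slightly larger definable set $G_\s^0$, then invokes \cite{debacker:nilp}*{Lemma~4.5.1} to show that a unipotent element of $G_\s^0$ already lies in $G_{z,\s,0}$). Having established first-orderness, the paper transfers to characteristic zero and only there uses the genuine convergent exponential, citing \cite{kim:hecke} and Adler's mock exponentials for the Moy-Prasad compatibilities you need (including the passage through the finite-field exponential $\op{fexp}$ to get the coset bijection $\cal{N}\cap(N+\fg_{\s,0+})\leftrightarrow\cal{U}\cap uG_{\s,0+}$).

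Your route avoids the transfer step by working directly with a truncated polynomial exponential valid in large residual characteristic. This is legitimate, but two remarks: first, your final paragraph's emphasis on making $\exp$ \emph{definable} is a red herring for this theorem---you never use transfer, so you only need $\exp$ to exist with the right properties for each individual $(F,z)$, not to be uniformly definable. Second, your claim that filtration compatibility ``is then a local computation within each parahoric'' hides precisely the argument the paper spells out: one must check that $\exp$ on nilpotents in $\fg_{\s,0}$ reduces modulo $G_{\s,0+}$ to the finite-field exponential on $\fg_{\s,0}/\fg_{\s,0+}$, and that this finite-field map is injective on nilpotents, in order to get the coset bijection in both directions. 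This is not hard, but it is the actual content, and the paper's approach buys the advantage of citing it from established characteristic-zero sources rather than re-verifying it for the truncated series in positive characteristic.
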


\begin{proof} First, we show that (1) and (2) are first-order
  statements in the Denef-Pas language.  The statement (2) is
  first-order because the set $G_{z,\s,0+}$ is definable, by
  \cite{CGH2}*{Lemma 3.4}. The set $G_{z,\s,0}$ appearing in (1) is
  not currently known to be definable in general, but by the same
  lemma, it is contained in a slightly larger definable set $G_\s^0$
  of Galois-fixed points of $G_z(K)_{\s, 0}$ (where $K$ is an
  extension that splits $G_z$).  However, if $u$ is a unipotent
  element contained in $G_\s^0$, then in fact $u$ is contained in
  $G_{z, \s, 0}$ by \cite{debacker:nilp}*{Lemma 4.5.1}.  Thus (2) is a
  first-order statement as well.
   
  Because these statements are first order in the Denef-Pas language,
  by a transfer principle, it is enough to prove the result when $F$
  has characteristic zero.  Now we work over $F$, with $z\in Z(F)$
  fixed, and write $\fg_{\s,r}$, $\fg_{\s,r+}$, $G_{\s,r}$,
  $G_{\s,r+}$ for the usual Moy-Prasad filtrations with $r\in\ring{R}$
  for the Lie algebra, and $r\in \ring{R}_{\ge 0}$ for the group.  In
  particular, $\fg_{\s,0}=\fg_{z,\s,0}(F)$, and so forth.

  When $F$ is characteristic zero, and $m$ is large enough, we have an
  exponential map defined on the set of all topologically nilpotent
  elements with the following properties.
  \begin{enumerate}
    \item $\exp(\fg_{\s,r}) = G_{\s,r}$, for all $r>0$.
    \item $\exp(\fg_{\s,0+}) = G_{\s,0+}$.
    \item $\exp$ restricts to a bijection between the set of
      nilpotent elements in $\fg_{\s,0}$ and the set of unipotent
      elements in $G_{\s,0}$.
   \item Let $N$ be a nilpotent element in $\fg_{\s,0}$.  Then the
     image of $\exp(N)$ in $G_{\s,0}/G_{\s,0+}$ is $\op{fexp}(\bar N)$,
     where $\bar N$ is the image of $N$ in $\fg_{\s,0}/\fg_{\s,0+}$ and
     $\op{fexp}$ is the finite field exponential from the nilpotent
     set of $\fg_{\s,0}/\fg_{\s,0+}$ to the unipotent set of
     $G_{\s,0}/G_{\s,0+}$. The map $\op{fexp}$ is injective on the
     nilpotent set.
   \item The exponential map from the nilpotent set to the unipotent
     set preserves orbits and the partial order given by orbit closure.
 \end{enumerate}
 These properties follow from \cite{kim:hecke}*{Prop. 3.1.1}, which
 shows that the exponential map defined by the usual series converges,
 and from the properties of mock exponential maps constructed by
 Adler \cite{adler:anisotropic}*{\S 1.5-1.6}.  When the exponential
 is defined, it is a mock exponential map.
   
  Pick $u$ in the given conjugacy class in such a way that $u =
  \exp(N)$, and $N$ has the properties in Theorem~\ref{thm:bm}, for
  some $\s\in \cF$.
  Then $u\in G_{z,\s,0}$.  Let $u' = \exp(N') \in u G_{z,\s,0+}$.
  Reducing modulo
  $G_{\s,0+}$ gives $\op{fexp}(\bar N') = \op{fexp}(\bar N)$. By
  injectivity, we have $\bar N' = \bar N$ and $N' \in N +
  \fg_{\s,0+}$.  By Theorem~\ref{thm:bm}, $N$ lies in the closure of the
  orbit of $N'$.  Exponentiating again, $u = \exp(N)$ lies in the
  closure of the orbit of $u' = \exp(N')$.
\end{proof}

We say that $\Y=(u,\s)$ is a unipotent Barbasch-Moy pair, with $\s\in
\cF$, if $u$ is unipotent and it satisfies the properties of the
theorem.  For each $\s\in \cF$, there is a definable set consisting of
all unipotent elements $u$ such that $\Y=(u,\s)$ is a unipotent
Barbasch-Moy pair.

\subsection{Volume forms}\label{sec:volume}

All integrals are to be computed with respect to invariant measures on
their respective orbits.  This means that the appropriate context for
motivic integration is integration with respect to volume forms, as
described in \cite[\S 8]{CL}.  Volume forms in the context of
reductive groups are explained in \cite{CGH}.  We follow those
sources.

We recall that each nilpotent orbit can be endowed with an invariant
motivic measure $d\mu^{\op{nil}}$ that comes from the the canonical
symplectic form on coadjoint orbits~\cite[Prop.~4.3]{CGH}.  

We obtain an invariant volume form on the unipotent set in the
group by exponentiating the volume form on the nilpotent cone.

The invariant volume forms on stable regular semisimple orbits in the
Lie algebra are constructed in \cite{CHL}.  We have the morphism
$\fg^\reg\to\fc_{\fg^*}$ that classifies the regular semisimple stable
orbits.  The Leray residue (in the sense of \cite{CL}) of the
canonical volume form on $\fg$ by the canonical volume form on
$\fc_{\fg^*}$ yields an invariant volume form $d\mu^\reg$ on each
fiber of the morphism, and hence an invariant volume form on stable
regular semisimple orbits with free parameter $X\in\fc_{\fg^*}$.  We
compute all stable orbital integrals on stable regular semisimple
orbits with respect to this family of volume forms.  By restriction to
each conjugacy class in the stable conjugacy class, we obtain an
invariant measure on each regular semisimple conjugacy class.

We construct an algebraic differential $d$-form on $G$, where $d$ is
the relative dimension of $G$ over $Z$.  We begin with the split case
(and $Z$ a singleton set).  If $G^{**}$ is split over $\op{VF}$, we
have a top invariant form on $G^{**}$ given on an open cell of
$G^{**}$ by
\begin{equation}\label{eqn:omegaK}
d^\times t\land dn\land dn',
\end{equation}
where $d^\times t$ is a top invariant form on a split torus $T^{**} \subset
B^{**}$, $dn$ a top invariant form on the unipotent radical
$R_u(B)^{**}$ of $B^{**}$, and $dn'$ on the unipotent radical of the
Borel subgroup opposite to $B^{**}$ along $T^{**}$.  We may pick root
vectors $X_\alpha$ for each root (as fixed choices over $\ring{Q}$).
Then by placing a total order on the positive roots, we may write
\[
n = \prod_{\alpha>0}\exp(x_\alpha X_\alpha),
\]
and $dn = \land_{\alpha>0} dx_\alpha$.  Similar considerations hold
for $dn'$.  It is known that this differential form extends to an
invariant differential form on all of $G^{**}$.

To go from the split case to the general case, 
let $G_{/Z}$ be a definable reductive group over a cocycle space $Z$.
Part of the data
for $Z$ is a field extension $K$ splitting $G$.  Let $G_K$
be the (definable) base change of $G$ from $\VF$ to $K$. We may view
$G$ as a subset of $G_K\subset \VF^{n}$, for some
$n\in\ring{N}$.

We may extend the top invariant form $\omega$ to $\omega_K$, from
$\VF$ to $K = \VF[t]/(p_a(t)) = \VF^k$, by taking each coordinate
$x_\alpha$, and so forth to be a monic polynomial of degree $k$ with
coefficients in $\VF$.  This does not change the degree $d$ of the
differential form, but expanding in $t$, it takes values in $\VF^k$.
The form $\omega_K$ is $G_K$-invariant.  It is also invariant by the
action of $G_{K,\text{adj}}$ on $G_K$ by conjugation.  We have a
morphism $G_K\to Z$, and $G$ is identified with the fixed point set of
the action of $\op{Gal}(K/\VF)$ on $G_K$.  Let $dz$ be any top form on
$Z$.

We describe the differential form when $G$ is quasi-split.  In this
case, the action of the Galois group fixes a pinning, the Galois group
permutes the root spaces and permutes the root vectors $X_\alpha$, up
to some structure constants.  From the expression for the differential
form in Formula \ref{eqn:omegaK}, we see that the Galois group
preserves the differential form $dz\land \omega_K$ up to a cocycle
with values in $K^\times$.

Even if the group is not quasi-split, the Galois group fixes the
pinning up to a cocycle $g_\sigma$ with values in $G_{K,\text{adj}}$,
and because the adjoint group action preserves $dz\land \omega_K$, we
see that the differential form is preserved by the Galois action, up
to a cocycle $a_\sigma$ with values in $K^\times$.  When we take
nonarchimedean structures, we may split this cocycle by Hilbert's
90th.  Working at the definable level, we introduce a new free
parameter $b$ into the cocycle space $Z$ taking values in $K^\times$
that is subject to the relation $a_\sigma = \sigma(b)^{-1} b$.

Set $\omega_0 = b\, dz\land\omega_K$.  It is Galois stable on
$G$, so on this definable set, the $d+\dim Z$-form takes values in $F$.
Also, since $\omega_K$ is invariant by $G_K$, this implies the
invariance of $\omega_0$ by $G$ (acting fiberwise over $Z$).  

We have a morphism $G^\reg\to \fc_{G^*}$ that classifies stable
regular semisimple conjugacy classes in the group.  We form the Leray
residue of the Haar volume form $\omega_0$ on $G$ with respect to the
canonical volume form on $\fc_{G^*}$.  This yields an invariant volume
form on each fiber of the morphism, and hence an invariant volume form
$d\mu^\reg$ on stable regular semisimple orbits in the group.

\subsection{Relation to valued fields}

To put the preceding definitions in context, we make a few comments
about reductive groups over nonarchimedean fields $F$ in large
residual characteristic.  In applications, we assume that the residual
characteristic is sufficiently large that all Galois groups that
appear are tame (that is, wild inertia is trivial).

\subsubsection{Multiplicative characters}

We recall that there is currently no good theory of multiplicative
characters for motivic integration.  In particular, since the
Langlands-Shelstad transfer factor makes use of multiplicative
characters in the form of $\chi$-data, when working motivically,
we restrict our attention to a neighborhood of the identity on the
group.  That is, we restrict to the kernels of the multiplicative
characters.

\subsubsection{Frobenius automorphism}

Although we may speak of unramified extensions and their Galois
groups, when working motivically, we have no way to single out the
canonical Frobenius generator of the Galois group of an unramified
extension.  Indeed, we do not have access the cardinality of the
residue field $q$, until we specialize to a particular nonarchimedean field
$F$.

Instead, in \cite{CHL} and here, we work with an arbitrary generator
$\op{qFr}$ of cyclic group $\Sigma^{unr}$.  In the Tate-Nakayama
isomorphism, the quasi-Frobenius element is used to identify
$\op{Gal}(E/F) = \ring{Z}/n\ring{Z}$, when $E/F$ is unramified of
degree $n$.  If we pick the ``wrong'' (i.e. non-Frobenius) generator
of $\op{Gal}(E/F)$, it has the effect of replacing the endoscopic
datum $s\in \hat T^\Sigma$ with another element $s^i$ with the same
centralizer, for some $i\in (\ring{Z}/n\ring{Z})^\times$.  This again
gives valid endoscopic data, and the arguments still work.

\section{Langlands-Shelstad transfer factor for Lie algebras}
\label{sec:lsxfer}

In this section, we prove the constructibility of the
Langlands-Shelstad transfer factor for Lie algebras as stated in
Theorem~\ref{thm:xfer-factor}.  Our definition will be carried out
entirely in terms of the Denef-Pas language and constructible
functions, without reference to a nonarchimedean field.  We note that
this theorem has already been established in the unramified
case~\cite{CHL}.  All the ideas of the proof are already present in
the unramified case.  The key observation is that the transfer factor
can be built up from Tate-Nakayama, field extensions, cocycles,
$SL(2)$-triples, conjugation, group operations, and inner twisting.
These concepts have all been developed in a definable context by
previous research \cite{CHL}, \cite{CGH}.  Readers who are willing to
accept without proof the constructibility of transfer factors may skip
ahead to Section~\ref{sec:tp}.

We make a fixed choice of a nonzero sufficiently divisible
$k\in\ring{Z}$.  We will give the definition of the transfer factor in
backwards order starting with the top-level description, and
successively expanding and refining the definitions until all unknown
terms have been specified.  The transfer factor $\Delta(X_H,X_G,\bar
X_H,\bar X_G)$ is the product of two constructible terms:
\[
\Delta_0(X_H,X_G,\bar
X_H,\bar X_G)\quad\text{and}\quad \ring{L}^{d(X_H,X_G)}.
\]
The parameters $X_H$ and $\bar X_H$ run over the definable set of
$G$-regular semisimple elements of the Lie algebra $\fh$.  The
parameters $X_G$ and $\bar X_G$ run over the definable set of regular
semisimple elements of $\fg$; that is, $(X_G,\bar X_G)\in
\fg^\reg\times_Z\fg^\reg$.  Recall that $\psi$ is the inner
isomorphism defined over an extension of $\VF$ between $G$ and $G^*$.
The transfer factor is defined to be $0$ unless $X_H$ and $X_G$
correspond under the definable condition requiring the image of
$\psi(X_G)$ in the Chevalley quotient $\fc_{\fg^*}$ to equal the image
of $X_H$ in $\fc_{\fh}$ under $\fc_{\fh}\to\fc_{\fg^*}$, and similarly
for $(\bar X_H,\bar X_G)$.  (Note that the endoscopic groups $H$ is
quasi-split by definition, so that $H=H^*$ and $\fh=\fh^*$.)  We now
assume that the parameters are restricted to the definable subsets
satisfying these constraints.  The factor $\ring{L}^{d(X_H,X_G)}$ is
the usual discriminant factor (called $\Delta_{\rom4}$ by Langlands
and Shelstad).  It is constructible by \cite{CHL}.

The parameter $X_G$ is to be considered the primary parameter.  The
transfer factor depends in a subtle way on its conjugacy class within
its stable conjugacy class. The parameters $(\bar X_H,\bar X_G)$
should be viewed as secondary, only affecting the normalization of the
transfer factor by a scalar independent of $X_G$.

The constructible function $\Delta_0$ is a step function on 
\begin{equation}\label{eqn:xfer-domain}
D:= \fh^{G-\reg}\times_Z\fg^\reg\times_Z \fh^{G-\reg}\times_Z\fg^\reg
\end{equation}
with variables $(X_H,X_G,\bar X_H,\bar X_G)$,
and it will
be defined as a finite combination of indicator functions $1_{D_{j,*}}$ of
definable sets $D_{j,{\rom{1}}}$ and $D_{j,\rom{3}}$:
\[
\Delta_0 = 
\left(\sum_{j_1=0}^{k-1} e^{2\pi i j_1/k} 1_{D_{j_1,{\rom{3}}}}\right)
\left(\sum_{j_2=0}^{k-1} e^{2\pi i j_2/k} 1_{D^{12}_{j_2,{\rom{1}}}}\right)
\left(\sum_{j_3=0}^{k-1} e^{-2\pi i j_3/k} 1_{D^{34}_{j_3,{\rom{1}}}}\right).
\]
The subscripts correspond to the Roman numerals $\rom1$ and $\rom3_1$
in the Langlands-Shelstad article.  Typically, constructible functions
have integer coefficients, but there is no harm in extending scalars
to $\ring{C}$ to allow the given roots of unity.

We will give further definable sets $D_{j,{\rom{1}}}$ and
$D_{j,\rom{3}}$.  The definable set $D_{j,\rom{3}}$ is a subset of the
$D$.  The definable set
$D_{j,\rom{1}}$ is a subset of $\fh^{G-\reg}\times_Z\fg^\reg$, which
we pull back to sets $D^{12}_{j,\rom1}$ and $D^{34}_{j,\rom1}$ on
$D$ under the projections maps onto the first two
factors, and the last two factors of $D$,
respectively:
\[
(X_H,X_G,\bar X_H,\bar X_G)\mapsto^{12} (X_H,X_G),\quad
(X_H,X_G,\bar X_H,\bar X_G)\mapsto^{34} (\bar X_H,\bar X_G).
\]

We have a space of regular nilpotent elements in the quasi-split Lie
algebra $\fg^*$.  The Kostant section \cite{Kott} to the Chevalley
quotient is determined by a choice of regular nilpotent element.  The
transfer factor will be independent of the choice of regular nilpotent
used to determine the Kostant section, allowing us to treat the
regular nilpotent element as a bound parameter.  The element $X_H\in
\fh$ determines an element $X\in \fg^*$ by the composite of maps
\[
\fh \to \fc_{\fh} \to \fc_{\fg^*} \to \fg^*,
\]
where the last map is the Kostant section.

In the rest of this section, we review the technical details of the
construction from \cite{LSxf}.  These details require a significant
amount of local notation ($\lambda(T_{\text{sc}})$, $s_\alpha$, $n(s_\alpha)$,
$m(\sigma_T)$, $a_\alpha$, $\omega_T$, $p$, $\op{inv}$, $h$, $\bar h$,
$U$, $u(\sigma)$, $v(\sigma)$, $\bar v(\sigma)$, $\kappa_T$,
$\kappa_U$) that has been borrowed from \cite{LSxf}.  This notation
will not be used elsewhere in this article.

We define $a$-data to be the collection of constants $a_\alpha:=
\alpha(X)$, for $\alpha\in \Phi$.  This choice of $a$-data allows us
to do without $\chi$-data (in the sense of Langlands and Shelstad) for
the Lie algebra transfer factor.

\subsection{The cocycle $\lambda(T_{\text{sc}})$}

We recall the definition of a Galois cocycle $\lambda(T_{\text{sc}})$ with
values in $T_{\text{sc}}(K)$, attached to the centralizer $T_{\text{sc}}$ of $X$ in
$G^*_{\text{sc}}$, where $G^*_{\text{sc}}$ is the simply connected cover of the
derived group of $G^*$.  The entire construction takes place in the
simply connected cover.  In this one subsection, we
sometimes drop the subscript ${\text{sc}}$, writing $(G^*,B^*,T^*)$ instead
of $(G^*_{\text{sc}},B^*_{\text{sc}},T^*_{\text{sc}})$, and so forth.

We take $K/\VF$ to be a splitting field of $G$, and take $L/\VF$ to be
a Galois extension that splits $T_{\text{sc}}$.  There are bound parameters
running over $L/\VF$ and its enumerated Galois group.  We let $h$ be a
bound parameter in $G^*$ such that $(T_{\text{sc}},B_{\text{sc}}) =
\op{Ad}(h)\,(T^*,B^*)$.

If $w$ is any element of the Weyl group of $T^*$ in $G^*$ with reduced
expression $w = s_{\alpha_1}\cdots s_{\alpha_r}$, we set $n(w) =
n(s_{\alpha_1})\cdots n(s_{\alpha_r})$ where
$n(s_\alpha)$ is the image
in $\op{Norm}_{G_{\text{sc}}^*}(T_{\text{sc}}^*)$ of
\[
\begin{pmatrix}0 &1\\ -1 & 0\end{pmatrix} = \op{exp} X_\alpha \op{exp} -
  X_{-\alpha} \op{exp} X_\alpha
\]
under the homomorphism $SL(2)\to G^*_{\text{sc}}$ attached to the Lie triple
$\{ X_\alpha,X_{-\alpha},H_\alpha \}$ coming from a pinning
$(B^*,T^*,\{X_\alpha\})$).  The transfer factor does not depend on the
pinning, and we treat the pinning as a bound parameter.  The
exponential map used to define $n(s_\alpha)$ is a polynomial function
on nilpotent elements.

The cocycle $\lambda(T_{\text{sc}})$ is defined using the $a$-data by an enumerated
cocycle of $\op{Gal}(K/\VF)$ as described above, with
\[
\sigma \mapsto h m(\sigma_T) \sigma(h)^{-1},\quad \text{where}\quad
m(\sigma_T):= \left( \prod_{1,\sigma}^p 
a_\alpha^{\alpha^\vee}\right) n(\omega_T(\sigma)),
\]
and where $x^{\alpha^\vee}$ denotes the image of $x$ under the coroot
$\alpha^\vee$.  An inspection of each element of the formula shows
that this cocycle is definable.  See \cite{LSxf} for the combinatorial
description of the gauge $p$ and the choice $\omega_T$ of Weyl group
elements (coming from the twisted action of the Galois group on
$T^*_{\text{sc}}$).

\subsection{Another cocycle}

Next we recall the definition of an enumerated Galois cocycle
$\op{inv}(X_H,X_G,\bar X_H,\bar X_G)$.  Let $T$ and $\bar T$ be the
centralizers in $G$ of $X$ and $\bar X$, where $X$ and $\bar X$ are
constructed as above from a Kostant section.  We take a bound
parameter space for a Galois field extension $L/K/\VF$ that splits $T$
and $\bar T$.  The cocycle takes values in the $L$-points of $U:=
(T\times \bar T)/\op{Zent}_{\text{sc}}$, where the center $\op{Zent}_{\text{sc}}$ of
the simply connected cover is mapped diagonally.

To construct the cocycle, we take bound parameters $h$ and $\bar h$
 in $G(L)$ such that
\[
\op{Ad} h\,\psi(X_G)  = X,\quad \op{Ad} \bar h\, \psi(\bar X_G) = \bar X,
\]
with $\psi$ the inner twist given above.  As an inner twist, we get an
enumerated cocycle of $\op{Gal}(K/\VF)$ by $\op{Int} u(\sigma) := \psi
\sigma(\psi)^{-1}$.  By our embedding $K\to L$ we obtain an enumerated
cocycle of $\op{Gal}(L/\VF)$.  Set $v(\sigma) = h u(\sigma)
\sigma(h)^{-1}$ and $\bar v(\sigma) = \bar h u(\sigma) \sigma(\bar
h)^{-1}$. The cocycle $\op{inv}(X_H,X_G,\bar X_H,\bar X_G)$ is then
\[
\sigma \mapsto (v(\sigma)^{-1},\bar v(\sigma))\in U(L).
\]
All this data is definable.

\subsection{Tate-Nakayama}

We continue with our fixed choice of a sufficiently divisible integer
$k$.  We have an element $\kappa\in \hat T$ that is part of our
endoscopic data.  The article \cite{CHL} shows how to treat this
element as an enumerated cocycle $\kappa_T$ with values in $X^*(T)$
for various Cartan subgroups $T$.  It uses this to give the
Tate-Nakayama pairing in terms of the Denef-Pas language.  (This
requires the cocycle space of $Z$ to include the space of pairs
$(E,\op{qFr})$ of an unramified extension of sufficiently large fixed
degree and generator $\op{qFr}$ of the Galois group $E/\VF$.)  In
particular, with the fixed choice of a highly divisible integer $k$ as
above, for $j\in \ring{Z}$, and for a definable parameter space of
enumerated cocycles $c(\sigma)$ with values in $T(L)$, there is a
definable set
\[
\op{TN}( c(\sigma),\kappa_T)_j
\]
of all cocycles such that the Tate-Nakayama pairing of the cocycle with
$\kappa_T$ has value $e^{2\pi j/k}$.  We define $D_{j,\rom1}(X_H,X_G)$ to
be
\[
\op{TN}( \lambda(T_{\text{sc}}),\kappa_{T_{\text{sc}}})_j
\]

Similarly, we may form an element $\kappa_U$ (the image of
$(\kappa_T,\kappa_{\bar T})$ in $X^*(U)$).  We let
\[
D_{j,\rom{3}}(X_H,X_G,\bar X_H,\bar X_G)
\]
 be the definable set
\[
\op{TN}( \op{inv}(X_H,X_G,\bar
X_H,\bar X_G),\kappa_{U})_j
\]
Combined with our earlier definitions, this completes the definition
of the Lie algebra Langlands-Shelstad transfer factor as a
constructible function.

Each part of the definition is a direct translation of the definition
over nonarchimedean fields, adapted to the Lie algebra.  The choice
$a_\alpha = \alpha(X)$ causes the term $\Delta_{\rom2}$ in the
Langlands-Shelstad definition to equal $1$.  For the Lie algebra
transfer factor, we may take $\Delta_{2}=1$.

\section{Transfer principles}\label{sec:tp}

\subsection{Transfer of asymptotic identities}

Our aim is to transfer identities of Shalika germs from
one field to another.  Since germs express the asymptotics of
orbital integrals, we develop a transfer principle for
asymptotic identities.  We have the following transfer principle.

\begin{theorem} \label{thm:asym}
Let $S$ be a definable set.    
Let $g$ be a constructible function on $S$ and let
  $f:S\to \ring{Z}$ be a definable function.  For each $a\in\ring{Z}$,
let $S_a = f^{-1}(a)$.  Then there exists 
  $m\in\ring{N}$ such that for all $F_1,F_2\in\Loc_m$ with the same
  residue field, we have the following: for all $a\in\ring{Z}$, 
  $g_{F_1}$ is zero on $S_a(F_1)$ if and only if $g_{F_2}$ is zero
  on $S_a(F_2)$.
\end{theorem}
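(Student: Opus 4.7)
The plan is to apply a parametric form of the Cluckers--Loeser transfer principle to $g$, with parameter $a$ in the value-group sort. The basic principle transfers vanishing of a single constructible function between fields with the same residue field; we need this to hold uniformly in a parameter ranging over the infinite set $\ring{Z}$.

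To this end, I would package the data into a single constructible function $\tilde g$ on $S \times \ring{Z}$ defined by $\tilde g(x,a) = g(x)\cdot 1_{f(x)=a}$. This is constructible because $g$ is constructible on $S$, the condition $f(x)=a$ is first-order definable in the Denef--Pas language since $f$ is a definable function, and constructible functions are closed under multiplication by indicators of definable sets. For each $a \in \ring{Z}$, the fiber $\tilde g(\cdot,a)$ is supported on $S_a$ and agrees there with $g|_{S_a}$, so $\tilde g_F(\cdot,a)$ vanishes identically on $S(F)$ if and only if $g_F$ vanishes on $S_a(F)$. Applying the parametric transfer principle of \cite{CLe}, extended as in \cite{CGH2}, to $\tilde g$ with parameter in the value-group sort produces a single $m \in \ring{N}$ depending only on the constructible presentation of $\tilde g$, such that for all $F_1, F_2 \in \Loc_m$ with the same residue field and every $a \in \ring{Z}$, vanishing of $\tilde g_{F_1}(\cdot, a)$ on $S(F_1)$ is equivalent to vanishing of $\tilde g_{F_2}(\cdot, a)$ on $S(F_2)$. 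Since the value-group sort is interpreted as $\ring{Z}$ in every nonarchimedean field, the integer $a$ is common to $F_1$ and $F_2$, and the theorem follows.

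The main obstacle is the uniformity of $m$ in $a$: applying a non-parametric transfer principle for each $a$ separately would yield bounds $m_a$ that one cannot control as $a$ ranges over the infinite set $\ring{Z}$. The resolution is that the parametric principle of \cite{CGH2} extracts a bound depending only on the complexity of $\tilde g$ as a constructible function on $S \times \ring{Z}$, not on any particular fiber; this is precisely the refinement of the basic transfer principle that the present paper relies on elsewhere.
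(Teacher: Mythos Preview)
Your reduction to $\tilde g(x,a)=g(x)\,1_{f(x)=a}$ on $S\times\ring{Z}$ is correct and the argument does go through, but the paper takes a different route: it gives a direct proof by iterated cell decomposition \cite[Thm.~7.2.1]{CL}, refining simultaneously for $g$ and for the definable function $f$ until all valued-field variables are eliminated, so that on each piece both $g$ and $f$ are pulled back along a surjection $p\colon Z\to C$ from a base $C$ living purely in the residue-field and value-group sorts. Since constructible (and definable) data with no valued-field variables specialize identically for $F_1,F_2$ with the same residue field, the vanishing of $g$ on each level set $S_a$ is then read off from the common pair $(g_C,f_C)$. Your route is legitimate but hides exactly this work inside a citation, and the citation is not quite on target: what you actually need is the locus-of-vanishing construction of \cite[Th.~4.4.4]{CGH} (restated as the Lemma following Corollary~\ref{cor:12} below), which produces a constructible $\locus{\tilde g}$ on $\ring{Z}$ whose zero set in each $F$ is $\{a:\tilde g_F(\cdot,a)\equiv 0\}$, together with the observation that $\locus{\tilde g}$ has no valued-field variables and hence $\locus{\tilde g}_{F_1}=\locus{\tilde g}_{F_2}$. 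The references \cite{CLe} and \cite{CGH2} do not state this fiberwise transfer as such. Both approaches ultimately rest on the same cell-decomposition machinery; the paper's proof is self-contained, while yours is shorter once the correct black box is identified.
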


\begin{proof} We use the notation of the cell decomposition theorem
  \cite[Theorem 7.2.1]{CL}.  In particular, in this one proof, $Z$
  will denote a cell, in a departure from its usual meaning as a
  cocycle space.  By quantifier elimination, we may assume that none
  of our formulas contain bound variables of the valued field sort.
  Assume that $S\subset S'[1,0,0]$, for some definable set $S'$.  By
  cell decomposition, we may partition $S$ into finitely many cells
  $Z$ such that each cell comes with a definable isomorphism
  $\iota:Z \to Z'\subset S'[1,n',n'']$ and a projection $\pi$ of the
  cell $Z'$ onto a base $C\subset S'[0,n',n'']$.  Furthermore, there
  exists a constructible function $g_C$ on each base $C$ such that
\[
g_{|Z} = p^* g_C, \quad\text{where}\quad p = \pi\circ\iota.
\]
The morphism $p$ is an isomorphism followed by a
projection of a cell to its base.  As such, there exists $m$ such that
for all $F\in \op{Loc}_m$, the map $p_F$ is onto.

Similarly, we may pick a cell decomposition of $S$ adapted to the
definable function $f:S\to \ring{Z}$.  (We use a slightly stronger
property than what is stated in \cite{CL}.  Since $f$ is definable,
$f_C$ is also definable, rather than merely constructible.)  By
\cite[Prop 7.3.2]{CL}, there is a common cell refinement that is
adapted to both $g$ and $f$.

Note that the base $C$ has one fewer valued field variable than $S$.
We iterate this construction, to eliminate all valued field variables.
After iteration, and choosing new constants $m$, $n'$, and $n''$, we
have the following situation.  $S$ can be partitioned into finitely
many definable sets $Z$, each equipped with a morphism $p:Z\to
C\subset h[0,n',n'']$.  Furthermore, for each $C$ there is a definable
function $f_C:C\to\ring{Z}$ and constructible function $g_C$ on $C$
such that
\[
f_{|Z} = p^* f_C,\quad  g_{|Z} = p^* g_C.
\]
The morphisms $p_F:Z(F)\to C(F)$ are onto, for $F\in\op{Loc}_m$.

Let $F_1,F_2\in\op{Loc}_m$ have the same residue field.  Since $g_C$ and
$f_C$ have no valued field variables, we may assume that $f_{C,F_1} =
f_{C,F_2}$ and $g_{C,F_1} = g_{C,F_2}$.

The set $S_a$ is definable.  Assume that $g_{|S_a,F_1} = 0$.  Then on
any part $Z$,
\[
0 = g_{|S_a\cap Z,F_1} = p^*_{F_1} g_{C_a,F_1}, 
\quad\text{where}\quad C_a = f_C^{-1}(a)
\]
Since $p_{F_1}$ is onto, $g_{C_a,F_1}$ is identically zero, and thus
so is $g_{C_a,F_2}$ and $0 = p^*_{F_2} g_{C_a,F_2} = g_{|S_a\cap
  Z,F_2}$.  Thus, $g_{|S_a,F_2}=0$.  This proves the theorem.
\end{proof}

\begin{cor}\label{cor:12} 
  Let $g$ be a constructible function on $S\times\ring{Z}$.  There
  exists $m$ such for all $F_1,F_2\in\op{Loc}_m$ with the same residue
  field, we have the following.  If for some $a_0\in\ring{Z}$, the
  function $g(\cdot,a)_{F_1}$ is identically zero on $S(F_1)$ for all
  $a\ge a_0$, then the function $g(\cdot,a)_{F_2}$ is also identically
  zero on $S(F_2)$ for all $a\ge a_0$.
\end{cor}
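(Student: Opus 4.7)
The plan is to apply Theorem~\ref{thm:asym} directly, with only a mild reinterpretation of the data. Take $S':= S\times\ring{Z}$ as the ambient definable set, take $g$ (already given on $S\times\ring{Z}$) as the constructible function, and take $f:S'\to\ring{Z}$ to be the projection to the second factor, which is visibly a definable function in the Denef-Pas language. Then $S'_a := f^{-1}(a) = S\times\{a\}$, and the condition ``$g_F$ vanishes on $S'_a(F)$'' is literally the condition ``the function $g(\cdot,a)_F$ vanishes identically on $S(F)$.''

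Next I would invoke Theorem~\ref{thm:asym} on this data to produce $m\in\ring{N}$ such that, for any $F_1,F_2\in\op{Loc}_m$ with the same residue field and for every $a\in\ring{Z}$, one has $g(\cdot,a)_{F_1}\equiv 0$ on $S(F_1)$ if and only if $g(\cdot,a)_{F_2}\equiv 0$ on $S(F_2)$. The crucial point is that the theorem delivers $m$ uniformly in $a$, so the conclusion is a single universal statement quantifying over all $a$ simultaneously rather than a separate statement for each~$a$.

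Finally, suppose the hypothesis of the corollary holds for some $a_0\in\ring{Z}$, i.e.\ $g(\cdot,a)_{F_1}\equiv 0$ for every $a\ge a_0$. For each such $a$, the biconditional supplied by Theorem~\ref{thm:asym} forces $g(\cdot,a)_{F_2}\equiv 0$ on $S(F_2)$. Since this holds for each $a\ge a_0$, the conclusion is immediate.

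I do not anticipate any real obstacle: this is a packaging of Theorem~\ref{thm:asym} in which the fiber parameter $a$ is already present in the source definable set, so there is nothing new to prove. The only detail worth flagging is the verification that projection onto the value-group coordinate is a definable function, which is trivial in Denef-Pas.
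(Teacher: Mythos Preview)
Your proposal is correct and matches the paper's own proof essentially verbatim: the paper also takes $f:S\times\ring{Z}\to\ring{Z}$ to be projection onto the second factor, observes $f^{-1}(a)=S\times\{a\}$, and applies Theorem~\ref{thm:asym} for each $a\ge a_0$. Your explicit emphasis that the $m$ from Theorem~\ref{thm:asym} is uniform in $a$ is exactly the point that makes the argument work.
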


\begin{proof} Let $f:S\times \ring{Z}\to\ring{Z}$ be the projection
  onto the second factor.  The preimage of $a$ under $f$ is
  $S\times\{a\}$.  Apply the theorem to $g$ and $f$ for each $a\ge
  a_0$.
\end{proof}

We may apply the theorem and corollary to obtain a transfer principle
for asymptotic relations as follows.  Let $g$ be a constructible
function on $S$ and let $1_a$ be a definable family of functions on
$S$ indexed by $a\in\ring{Z}$.  For example, $1_a$ might be a family
of characteristic functions of a shrinking family of neighborhoods of
a point $s_0$.  (In this article, we often use the notation $1_X$
quite loosely to denote families of indicator functions {\it indexed}
by some $X$ running over a definable set. We do not require $1_X$ to
be the indicator function of $X$ itself.)  Then we obtain a
constructible function on $S\times\ring{Z}$ by $(s,a)\mapsto
g(s)1_a(s)$.  The corollary applied to this constructible function
gives a transfer principle for the vanishing of $g$ in sufficiently
small neighborhoods of the point $s_0$.  The size $a_F$ of the
neighborhood is allowed to vary with the field $F$.

\subsection{Uniformity of asymptotic relations}

We use the following lemma based on \cite[Th 4.4.4]{CGH}.

\begin{lem}  
  Let $\Lambda\times S$ be a definable set.  Let $g$ be a
  constructible function on $\Lambda\times S$.  Then there exists
  $m\in\ring{Z}$ and a constructible function $\locus{g}$ on $\Lambda$,
  such that for all $F\in\Loc_m$, the zero locus of $\locus{g}_{\,F}$
  equals the locus of
\[
\{v\in \Lambda(F)\mid \forall s\in S(F),\quad g_F(a,s)=0\}.
\]
\end{lem}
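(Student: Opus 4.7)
The plan is to reduce the lemma to the definability of the zero locus of a constructible function, which is precisely the content of \cite[Th.~4.4.4]{CGH}, and then to eliminate the parameter $s$ by a universal quantifier.

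First, I would apply the cited theorem to the constructible function $g$ on the definable set $\Lambda\times S$. This produces a definable subset $V\subset \Lambda\times S$ and a natural number $m_0\in\ring{N}$ such that, for every $F\in\Loc_{m_0}$,
\[
V(F)=\{(v,s)\in\Lambda(F)\times S(F)\mid g_F(v,s)=0\}.
\]
In other words, vanishing of $g$, which a priori is not a first-order condition because constructible functions take values in $\ring{C}$, becomes a first-order condition on points of $F$ once the residual characteristic is large enough.

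Next, I would form the definable subset
\[
W:=\{v\in\Lambda\mid \exists\, s\in S,\ (v,s)\notin V\}\subset \Lambda,
\]
which is definable because the class of definable sets is closed under complement and projection along the coordinate $s$. Its complement $W^c$ in $\Lambda$ then has $F$-points, for $F\in\Loc_{m_0}$, equal to
\[
W^c(F)=\{v\in\Lambda(F)\mid \forall s\in S(F),\ (v,s)\in V(F)\}
=\{v\in\Lambda(F)\mid \forall s\in S(F),\ g_F(v,s)=0\}.
\]
Taking $\locus{g}:=\mathbf 1_{W}$, the indicator function of the definable set $W$, gives a constructible function on $\Lambda$ whose zero locus in $F$-points is exactly $W^c(F)$, as required. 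One then sets $m=m_0$.

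The only point that needs care is the invocation of \cite[Th.~4.4.4]{CGH}: the zero locus of a constructible function is not literally cut out by a Denef-Pas formula, but in the formulation of that theorem it becomes definable after replacing $\op{Loc}$ by $\op{Loc}_{m_0}$, which is exactly the slack that the conclusion of our lemma allows. I expect this reduction to be the only substantive step; once the zero locus of $g$ is presented as a definable set, the rest is the standard closure of definable sets under projection and complement, and the passage to indicator functions is routine.
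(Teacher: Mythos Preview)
Your argument hinges on reading \cite[Th.~4.4.4]{CGH} as the statement that the zero locus of a constructible function $g$ on $\Lambda\times S$ is a \emph{definable} subset $V\subset\Lambda\times S$ for large residual characteristic. That is not what the theorem says. As the paper's own proof makes explicit, \cite[Th.~4.4.4]{CGH} is already the \emph{relative} statement of the lemma in the special case $S=h[n,0,0]$: it directly produces a constructible function $\locus{g}$ on the base $\Lambda$ whose zero locus is, field by field, the locus of identical vanishing of $g$ along the fibers. The output is a constructible function on $\Lambda$, not a definable subset of $\Lambda\times S$. Your final paragraph flags exactly this worry but resolves it by asserting that the theorem renders the zero locus definable once one passes to $\Loc_{m_0}$; that assertion is unsupported and is the gap. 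Zero loci of general constructible motivic functions need not be cut out by Denef--Pas formulas: such functions are $\ring{C}$-linear combinations of symbols $\ring{L}^f$, Presburger pieces, and residue-field point counts, and the vanishing of such a combination is not in general a first-order condition. Without a genuinely definable $V$, your projection step forming $W$ has no content, since closure under projection and complement is a property of definable sets, not of zero loci of constructible functions.

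The paper's proof is accordingly a reduction of arbitrary $S$ to the special case $S=h[n,0,0]$ handled by the cited theorem: one first replaces each value-group variable by $\ord(x)$ and each residue-field variable by $\ac(x)$ for fresh valued-field variables $x$, passing from $h[n,n',n'']$ to $h[n+n'+n'',0,0]$; then one extends $g$ by zero via $g\cdot 1_S$ to pass from an arbitrary definable $S\subset h[n,n',n'']$ to the ambient affine space.
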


\begin{proof}  Theorem 4.4.4 of \cite{CGH} works with exponential
  constructible functions instead of constructible functions, but the
  identical proof applies when working with non-exponential functions.

  Also, that theorem is restricted to $S=h[n,0,0]$.  We show that this
  special case implies the more general statement of our lemma.
  First, the case $h[n,n',n'']$ reduces to $h[n+n'+n'',0,0]$ by
  replacing each $\ring{Z}$-variable with $\ord\, x$ for some new
  $\VF$ parameter $x$, and replacing each residue field variable with
  some $\ac\, x$ for some new $\VF$ parameter $x$.

  If $S\subset h[n,n',n'']$ is arbitrary, then we replace the function
  $g$ with the function $g 1_S$, where $1_S$ is the indicator function
  on $S$ to go from $h[n,n',n'']$ to $S$.  We are now in the context
  covered by Theorem~4.4.4.
\end{proof}

We can show that the bound $a_0$ in the Corollary~\ref{cor:12}
can be chosen uniformly in the following sense.  

\begin{thm}\label{thm:uniform}
  Let $g$ be a constructible function on a definable set
  $S\times\ring{Z}$.  Suppose that there exists $m\in\ring{Z}$ such
  that for all $F\in \op{Loc}_m$, there exists $a_F$ for which
  $g(\cdot,a)_F$ is identically zero on $S(F)$, for all $a\ge a_F$.
  Then there exists $m'\ge m$ and a single $a_0$ such that we can take
  $a_F=a_0$ for all $F\in \op{Loc}_{m'}$.
\end{thm}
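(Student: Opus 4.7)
The plan is to use the preceding lemma to reduce to a statement about a single constructible function on $\ring{Z}$, and then use Presburger cell decomposition on that sort to extract a field-uniform bound. Applying the lemma above with $\Lambda = \ring{Z}$ (for the parameter $a$) to $g$ on $S \times \ring{Z}$ produces a constructible function $\locus{g}$ on $\ring{Z}$ such that, for every $F \in \op{Loc}_m$ (possibly enlarging $m$), the zero locus of $\locus{g}_F$ equals $\{a \in \ring{Z} : g_F(s,a) = 0 \text{ for all } s \in S(F)\}$. The hypothesis now translates to: for each $F \in \op{Loc}_m$, the support of $\locus{g}_F$ is bounded above. The goal reduces to producing a single $a_0$ such that $\locus{g}_F(a) = 0$ for every $a \geq a_0$ and every $F \in \op{Loc}_{m'}$, for some $m' \geq m$.

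Next, I invoke Presburger cell decomposition on the $\ring{Z}$-sort adapted to $\locus{g}$. The line $\ring{Z}$ partitions into finitely many Presburger cells $C_1, \ldots, C_r$, each either bounded or an infinite tail of a single arithmetic progression. After sufficient refinement, the Cluckers-Loeser description of constructible functions on $\ring{Z}$ (see \cite{CL}) lets me express $\locus{g}$ on each cell as a finite sum $\sum_j c_{ij}\, \ring{L}^{b_{ij} a + d_{ij}}$ with pairwise distinct linear exponents in $a$ and motivic coefficients $c_{ij}$. On each unbounded cell $C_i$ and in each $F \in \op{Loc}_m$, the geometric progressions $a \mapsto q^{b_{ij} a}$ (distinct $b_{ij}$, and $q > 1$) are linearly independent, so the eventual vanishing of $\locus{g}_F|_{C_i}$ is equivalent to the vanishing of every coefficient $c_{ij}$ at $q = q_F$. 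Hence $\locus{g}_F$ vanishes on the whole of every unbounded cell, and its support is contained in the union of the bounded cells, which has a field-independent upper bound $N_0$. Taking $a_0 = N_0 + 1$ and $m' = m$ completes the argument.

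The main obstacle is the structural step: putting $\locus{g}$ into the normal form $\sum_j c_{ij}\, \ring{L}^{b_{ij} a + d_{ij}}$ on each unbounded cell with pairwise distinct exponents after refinement, and executing the linear independence of exponentials at the level of motivic coefficients (one must check that "eventual vanishing in $F$" really forces pointwise vanishing of each coefficient in $F$, rather than accidental cancellations along the cell). Once that foothold is secured, the rest is a Vandermonde-style argument applied pointwise in each field $F$.
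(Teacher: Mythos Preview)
Your overall strategy matches the paper's: apply the preceding lemma with $\Lambda = \ring{Z}$ to obtain $\locus{g}$, put it into a normal form on Presburger cells, and argue that eventual vanishing on an unbounded cell forces identical vanishing there, so the support of $\locus{g}_F$ is contained in the (field-independent) union of the bounded cells. The gap is in the normal form you claim. Constructible Presburger functions on $\ring{Z}$ are generated not only by $\ring{L}^{\alpha(a)}$ for definable $\alpha$ but also by the $\ring{Z}$-valued definable functions of $a$ themselves, and products of the latter produce genuine polynomial factors. On a single Presburger cell the correct shape (this is the form the paper uses) is
\[
\locus{g}_F(a) \;=\; \sum_{i=1}^n c_i\, a^{k_i}\, q_F^{\ell_i a}
\]
with distinct pairs $(k_i,\ell_i)$, not merely distinct exponential rates. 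No amount of cell refinement eliminates the polynomial factors, so your claimed form $\sum_j c_{ij}\, \ring{L}^{b_{ij} a + d_{ij}}$ with pairwise distinct $b_{ij}$ is not available in general, and the linear-independence-of-geometric-progressions step does not apply as written.

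The paper handles this by citing \cite{CGH1}*{Lemma 2.1.7}: a function $\sum_i c_i\, t^{k_i} q^{\ell_i t}$ with distinct $(k_i,\ell_i)$ has only finitely many real zeros, their number bounded in terms of $n$ alone (this is an o-minimality argument for the real exponential field), so it cannot vanish on an unbounded set of integers unless every $c_i$ vanishes. Your argument is easily repaired either by quoting that lemma, or by the elementary fact that the functions $a \mapsto a^{k} q^{\ell a}$ for distinct $(k,\ell)$ and fixed $q>1$ are linearly independent on any infinite arithmetic progression; with that in place, eventual vanishing on an unbounded cell still forces all coefficients to vanish at $q_F$, and the remainder of your proof goes through unchanged.
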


\begin{proof}
  We apply the lemma with $\Lambda=\ring{Z}$ to obtain a constructible
  function $\locus{g}$ on $\ring{Z}$ that describes the locus of
  identical vanishing of $g$, which is a subset of $\ring{Z}$
  depending on $F\in\Loc_m$.

  A constructible function $\locus{g}$ lies in the tensor product
  $Q(\ring{Z})\otimes_{P^0(\ring{Z})} P(\ring{Z})$.  The exact
  definitions of $Q$ and $P^0$ are not important to us.  See
  \cite{CGH1}.  Nonetheless, it is important that this tensor product
  produces a separation of variables for $\locus{g} = \sum q_i
  \otimes p_i$, where all residue field variables occur in the ring
  $Q(\ring{Z})$ on the left, and where the ring $P(\ring{Z})$ on the
  right is the ring of constructible Presburger functions on
  $\ring{Z}$.  We may assume by cell decomposition that
  $\locus{g}$ contains no variables of valued field sort.

By the definition of constructible Presburger functions $p_i$ on
$\ring{Z}$, we may partition $\ring{Z}$ into a finite (field
independent) disjoint union of Presburger sets such that on each of
these sets, $\locus{g}$ has the form
\[
\locus{g}_F(t) = \sum_{i=1}^n c_i t^{k_i} q_F^{\ell_i t},
\]
where $q_F$ is the cardinality of the residue field of $F$,  each
$c_i$ depends only on residue field variables, the integers $k_i$ and
$\ell_i$ do not depend on any variables in the Denef-Pas language, and
we can assume that $(k_i,\ell_i)$ are mutually different for different
$i$.

The key point is that such a function can have only finitely many
zeroes, and their number is bounded by a constant that depends only on
the number $n$ of terms in the sum.  This is \cite[Lemma
2.1.7]{CGH1} and follows from $o$-minimality of the structure of the
real field enriched with the exponential.  Thus, the only way for the
zero locus of $\locus{g}$ to contain all integers greater than $a_F$
is for the coefficients $c_i$ to be zero for all fields $F$ for every
unbounded Presburger set in the disjoint union.

If we now take $a_0$ to be larger than the maximum of all the bounded
Presburger sets in the the disjoint union, then this integer works
uniformly for all fields $F$.
\end{proof}

\subsection{Transfer of linear dependence}

We use the following result from \cite{CGH2}.  We conjectured
this result to hold based on the 
requirements of smooth endoscopic matching.

\begin{theorem}[Cluckers-Gordon-Halupczok]\label{thm:cgh}
  Let $g:P\to \Lambda$ be a definable morphism between definable sets.
  For any constructible function $f$ on $P$, write $f_{\lambda}$ for
  the constructible function on the fiber $P_\lambda$ over $\lambda\in
  \Lambda$.  Let $\bf$ be a tuple of constructible functions on
  $P$. For $\lambda\in \Lambda$, let $\bf_\lambda$ be the
  corresponding tuple of constructible functions on the fiber
  $P_\lambda$.  Then there exists a natural number $m$ with the
  following property.  For every $F_1,F_2 \in \Loc_{m}$ such that
  $F_1$ and and $F_2$ have isomorphic residue fields, the following
  holds: if for each $\lambda\in \Lambda({F_1})$, the tuple of
  functions $\bf_{\lambda,F_1}$ is linearly dependent, then also
  for each $\lambda_2\in \Lambda({F_2})$, the tuple of functions
  $\bf_{\lambda_2,F_2}$ is linearly dependent.
\end{theorem}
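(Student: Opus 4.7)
The plan is to reduce linear dependence of the tuple $\bf_\lambda$ to identical vanishing of a single auxiliary constructible function, and then invoke the locus lemma stated just above together with the Cluckers--Loeser motivic transfer principle for vanishing of a constructible function from \cite{CLe}.

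First I would carry out a Gramian reduction. Writing $\bf=(f_1,\ldots,f_n)$, form the $n$-fold fibre product $P^{(n)}:=P\times_\Lambda\cdots\times_\Lambda P$ with projections $\pi_j\colon P^{(n)}\to P$, and consider the constructible function
\[
D(\lambda;x_1,\ldots,x_n) := \det\bigl(f_i(x_j)\bigr)_{1\le i,j\le n}
\]
on $P^{(n)}$, regarded as a family over $\Lambda$. A standard linear-algebra fact over $\ring{C}$---the field in which the specialised constructible functions take values---gives, for every $\lambda\in\Lambda(F)$, the equivalence
\[
\bf_{\lambda,F}\text{ is linearly dependent on }P_\lambda(F)
\;\Longleftrightarrow\;
D_{\lambda,F}\equiv 0\text{ on }P_\lambda(F)^n.
\]
The forward direction is immediate; the converse is a short induction on $n$ (choose $x_1$ where some $f_i$ is nonzero and restrict to the complementary codimension-one subspace of coefficient tuples).

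Next I would apply the locus lemma to $D$. After embedding $P^{(n)}$ as a definable subset of a direct product $\Lambda\times S$ (with $S$ an ambient definable set containing $P^n$, and $D$ replaced by $D\cdot 1_{P^{(n)}}$) the lemma directly applies and delivers a constructible function $\locus{D}$ on $\Lambda$ and some $m_0\in\ring{N}$ such that, for all $F\in\Loc_{m_0}$, the zero locus of $\locus{D}_F$ is precisely the set of $\lambda$ where $D_{\lambda,F}$ vanishes identically---equivalently, by the Gramian equivalence, the locus at which $\bf_{\lambda,F}$ is linearly dependent. The hypothesis then forces $\locus{D}_{F_1}\equiv 0$ on $\Lambda(F_1)$, and I would invoke the transfer principle for identical vanishing of a constructible function \cite{CLe} to conclude $\locus{D}_{F_2}\equiv 0$ on $\Lambda(F_2)$; running the Gramian equivalence backwards over $F_2$ then yields the required linear dependence of $\bf_{\lambda,F_2}$ for every $\lambda\in\Lambda(F_2)$.

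The main obstacle is the production of the locus function $\locus{D}$ itself---this is exactly the content of the lemma quoted immediately before the theorem and, at bottom, rests on \cite{CGH}*{Theorem~4.4.4}. The conceptual point is that linear dependence of constructible functions along a fibre is not a uniform first-order property of residue-field parameters; the Gramian passage is what converts it into identical vanishing of a single constructible function, which \emph{is} amenable to the standard transfer machinery. Once the locus lemma is granted, everything else is routine.
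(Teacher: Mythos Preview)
The paper does not give its own proof of this theorem: it is quoted as a result from \cite{CGH2}, so there is nothing in the paper to compare against directly. That said, your approach is correct and is exactly the method the paper uses to prove the asymptotic variant Theorem~\ref{thm:cgh-asymp} (which the paper explicitly says adapts the proof from \cite{CGH2}): reduce linear dependence along fibres to the identical vanishing of the determinant $\det(f_i(x_j))$ on the $n$-fold fibre product, apply the locus lemma to obtain a constructible function on $\Lambda$ detecting that vanishing, and then transfer identical vanishing between $F_1$ and $F_2$.
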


Let $S' \subset S\times\ring{Z}$ be any sets.  (This particular
definition pertains to sets of set theory rather than definable sets.)
For $a_0\in\ring{Z}$, we say that a tuple of complex-valued functions
$(f_1,\ldots,f_r)$ is {\it $a_0$-asymptotically linearly dependent} if
for all $a\ge a_0$, the tuple of functions
$(f_{1\,|S_a},\ldots,f_{r\,|S_a})$ is linearly dependent, where
$f_{|S_a}$ denotes the restriction of $f$ to $S'\cap (S\times\{a\})$.

We need an asymptotic variant of the previous theorem.

\begin{theorem}\label{thm:cgh-asymp}
  Let $g:P\times\ring{Z}\to \Lambda$ be a definable morphism between
  definable sets, for some $P$ and $\Lambda$.  Assume that $P'$ is a
  definable subset of $P\times\ring{Z}$.  Let $\bf$ be a tuple of
  constructible functions on $P'$.  Then there exists a natural number
  $m$ with the following property.  For every $a_0\in\ring{Z}$ and
  every $F_1,F_2 \in \Loc_{m}$ such that $F_1$ and and $F_2$ have
  isomorphic residue fields, the following holds: if for each
  $\lambda\in \Lambda({F_1})$, the tuple of functions
  $\bf_{\lambda,F_1}$ is $a_0$-asymptotically linearly dependent, 
  then also for each $\lambda_2\in \Lambda({F_2})$, the tuple of
  functions $\bf_{\lambda_2,F_2}$ is $a_0$-asymptotically linearly
  dependent.
\end{theorem}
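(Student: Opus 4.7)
The plan is to deduce Theorem~\ref{thm:cgh-asymp} from Theorem~\ref{thm:cgh} by moving the integer variable $a$ into the base of the fibration so that $a_0$-asymptotic linear dependence becomes a fiberwise linear dependence, and then handling the threshold $a_0$ uniformly as an auxiliary free parameter.

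First, I would introduce the auxiliary definable morphism $\tilde g \colon P' \to \Lambda\times\ring{Z}$ defined by $\tilde g(p,a) = (g(p,a),a)$. The fiber of $\tilde g$ over a point $(\lambda,a)\in \Lambda\times \ring{Z}$ is exactly the $a$-slice $P'_\lambda \cap (P\times\{a\})$, so linear dependence of $\bf$ on this fiber is precisely linear dependence of $\bf_\lambda$ on the slice. Consequently, for a fixed $a_0$, the hypothesis that every $\bf_{\lambda,F_1}$ is $a_0$-asymptotically linearly dependent is exactly the hypothesis of Theorem~\ref{thm:cgh} applied to the restricted morphism $\tilde g_{a_0}\colon P'\cap(P\times\ring{Z}_{\ge a_0}) \to \Lambda\times\ring{Z}_{\ge a_0}$ with tuple $\bf$ restricted accordingly.

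Applying Theorem~\ref{thm:cgh} to $\tilde g_{a_0}$ yields, for each fixed $a_0$, a natural number $m(a_0)$ giving the desired transfer. The remaining and main obstacle is to show that $m$ can be chosen independently of $a_0$. I would handle this by bundling the slices into a single definable family: set $\bar\Lambda := \{(\lambda,a,a_0)\in \Lambda\times \ring{Z}\times \ring{Z} : a\ge a_0\}$, $\bar P := \{(p,a,a_0)\in P\times\ring{Z}\times\ring{Z} : (p,a)\in P',\ a\ge a_0\}$, $\bar g(p,a,a_0) := (g(p,a),a,a_0)$, with pullback tuple $\bar{\bf}(p,a,a_0) := \bf(p,a)$. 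The $a_0$-slice of this family recovers exactly $\tilde g_{a_0}$, and the fiber of $\bar g$ over $(\lambda,a,a_0)$ depends only on $(\lambda,a)$. An inspection of the proof of Theorem~\ref{thm:cgh} in \cite{CGH2}---which proceeds through cell decomposition and the tensor-product separation of variables for the ring of constructible functions---shows that the bound $m$ depends only on the formula complexity of the defining data, and hence is bounded uniformly across the $a_0$-slices, yielding a single $m$ that works for all $a_0$ simultaneously. An alternative route is a compactness/ultralimit argument: failure of uniformity in $a_0$ would produce, via ultralimits of the putative counterexample fields, a contradiction to Theorem~\ref{thm:cgh} applied to a single slice. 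The crux of the proof is therefore this uniformity-in-$a_0$ step, since the non-asymptotic Theorem~\ref{thm:cgh} does not literally include $a_0$ as a variable.
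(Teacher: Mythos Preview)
Your reduction to Theorem~\ref{thm:cgh} via the morphism $\tilde g$ is sound for each fixed $a_0$, but the uniformity-in-$a_0$ step is a genuine gap. Neither of your two suggested fixes works as stated. Applying Theorem~\ref{thm:cgh} to the bundled morphism $\bar g$ fails because that theorem requires linear dependence over \emph{all} of $\bar\Lambda(F_1)$, whereas $a_0$-asymptotic dependence for a specific $a_0$ only controls the fibers with $a\ge a_0$; points $(\lambda,a,a_0')$ with $a_0'\le a<a_0$ are not covered, so the black-box hypothesis is simply not met. The appeal to ``inspection of the proof'' or to an ultralimit argument is not a proof either: the proof of Theorem~\ref{thm:cgh} in \cite{CGH2} produces its bound $m$ from cell decompositions of data that, in your setup, genuinely change with $a_0$ (the definable set $\ring{Z}_{\ge a_0}$ is a different set for each $a_0$), so there is no formal reason the resulting $m(a_0)$ should be bounded.

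The paper's proof avoids this by opening up Theorem~\ref{thm:cgh} rather than applying it as a black box. It forms the determinant $d(\lambda,a,X_1,\dots,X_r)=\det\bigl(f_i(X_j,a)\bigr)$ as a single constructible function on $\Lambda\times\ring{Z}\times P^r$, observes that linear dependence on the $(\lambda,a)$-slice is equivalent to identical vanishing of $d$ in the $X$-variables, and then invokes the locus construction to obtain a constructible function $\locus{d}$ on $\Lambda\times\ring{Z}$ whose zero set records exactly those $(\lambda,a)$ where the slice is dependent. The hypothesis then becomes vanishing of $\locus{d}_{F_1}$ on $\{(\lambda,a):a\ge a_0\}$, and the transfer to $F_2$ is a direct application of Corollary~\ref{cor:12}, which is precisely the result that supplies one $m$ valid for every $a_0$. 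The missing ingredient in your argument is this corollary (equivalently Theorem~\ref{thm:asym}); once you invoke it after the determinant-plus-locus reduction, the proof goes through, but that is the paper's argument.
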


\begin{proof} We adapt the proof in \cite{CGH2}. Suppose that $\bf$ is
  an $r$-tuple.  We may replace the components $f_i$ of $\bf$ with
  $f_i 1_{P'}$ on $P\times\ring{Z}$ to reduce to the case
  $P'=P\times\ring{Z}$.  We may reduce asymptotic linear dependence to
  the identical vanishing of a determinant.  Let
\[
S = \{(\lambda,a,X_1,\ldots,X_r)\mid (X_1,a),\ldots,(X_r,a)\in P'\times_\Lambda
\cdots\times_\Lambda 
P',~g(X_i,a)= \lambda\} \subset \Lambda\times\ring{Z}\times P^{r}.
\]
We have a constructible function $d$ on $\Lambda\times\ring{Z}\times P^{r}$
given by the indicator function of $S$ times
\[
(\lambda,a,X_1,\ldots,X_r)\mapsto 
\det f_i((X_j,a)).
\]
By \cite{CGH1}, there is a constructible function $\locus{d}$ on
$\Lambda\times\ring{Z}$ whose zero locus coincides with the locus of
identical vanishing of $d$ on $(\Lambda\times\ring{Z})\times P^r$.
Let $m\in\ring{Z}$ be an integer that is sufficiently large for
Corollary~\ref{cor:12} applied to $\locus{d}$ and large enough for the
zero locus result from \cite{CGH1} to hold.

Let $F_1,F_2\in\Loc_m$ have isomorphic residue fields.  Choose any
$a_0\in\ring{Z}$.  Assume that for all $\lambda\in \Lambda(F_1)$, the
tuple of functions $\bf_{\lambda,F_1}$ is $a_0$-asymptotically
linearly dependent.  Then the determinant $d_{F_1}$ vanishes
identically for each element of $A(F_1)$, where $A:=\{(\lambda,a)\mid
a\ge a_0\}$.  The set $A$ is definable.  Then $\locus{d}_{F_1}$ is
zero on $A(F_1)$.  By Corollary~\ref{cor:12}, $\locus{d}_{F_2}$ is
zero on $A(F_2)$.  Following the preceding steps backwards now for
$F_2$ instead of $F_1$, from $\locus{d}_{F_2}$ back to asymptotic
linear dependence, we find that for all $\lambda\in\Lambda(F_2)$, the
tuple $\bf_{\lambda,F_2}$ is $a_0$-asymptotically linearly dependent.
\end{proof}

\section{Endoscopic matching of smooth functions}

\subsection{Constructible Shalika germs}\label{sec:msg}

We give a proof of Theorem~\ref{thm:lie-shalika} asserting the
existence of constructible Shalika germs on the Lie algebra.  The
Shalika germs are indexed by nilpotent orbits. The number $k$ of
nilpotent orbits can vary according to the nonarchimedean field $F$
and the cocycle $z\in Z(F)$.  Since we cannot fix $k$ in advance, this
complicates matters, and we give a $n$-tuple of constructible
``Shalika germs'' for each $n$.  When, for a given nonarchimedean
field, $n$ is larger than the actual number of nilpotent orbits, a set
$\Y$ of auxiliary parameters is empty, and the construction yields
nothing.  When $n$ is smaller than the actual number of nilpotent
orbits, we obtain an incomplete collection of germs.

This section uses the following notation.  We make fixed choices
$G^{**}$, $\Sigma$, $\cF$, and so forth, as above.
We work with respect to a fixed cocycle space $Z$ that is used to
define a form $G$ of $G^{**}$ and inner form of $G^*$.

For $n\in \ring{N}$, let $\NF^n$ be the definable subset of $n$-tuples
of pairs $\Y_i=(N_i,\s_i)$, where $\Y_i$ is a Barbasch-Moy pair, $N_i$
is a nilpotent element in $\fg_{\s_i,0}$, and such that
$N_1,\ldots,N_n$ are pairwise non-conjugate.  

For any $n$-tuple $\s=(\s_1,\ldots,\s_n)$, we write $\NF^n_\s$ for the
subset of $\NF^n$ consisting of those elements whose $n$-tuple of
second coordinates is $\s$.  For any Barbasch-Moy pair $\Y=(N,\s)$,
let $1_\Y$ be the characteristic function of $N+\fg_{\s,0+}$.

\begin{proof}[Proof of Theorem~\ref{thm:lie-shalika}]

We form the constructible orbital integrals
\[
O(X,\Y) = \int_{O(X)} 1_{\Y} \,d\mu,
\]
for $X\in\fg$ that is nilpotent or regular semisimple,
$d\mu=d\mu^{\op{nil}}$ or $d\mu^\reg$ as appropriate, and Barbasch-Moy
pair $\Y$.  If $\Y\in \NF^n$, we let $O(X,\Y)$ be the $n$-tuple whose
$i$th coordinate is $O(X,\Y_i)$.

If $\Y=((N_1,\s_1),\ldots)\in \NF^n$, we write $\Theta(\Y)$ for the
square matrix with entries
\[
O(N_i,\Y_j),\quad\text{for}\quad i,j=1,\ldots,n.
\]
Let $\Theta^a(\Y)$ be the adjugate matrix of $\Theta(\Y)$, so that
\[
\Theta^a(\Y) \Theta(\Y) = \Theta(\Y) \Theta^a(\Y) = d_n(\Y)I_n,\quad 
\text{where}\quad d_n(\Y)=\det (\Theta(\Y)). 
\]
For $X\in \fg^\reg$ and $\Y\in \NF^n$, we define the Shalika germs
$\Gamma(X,\Y)$ as the $n$-tuple given by the matrix product
\[
\Gamma (X,\Y) = \Theta^a (\Y) O(X,\Y).
\]
It then follows directly from the definition of adjugate that orbital
integrals have a constructible Shalika germ expansion
\[
d_n(\Y) O(X,\Y) = \Theta(\Y)\Gamma(X,\Y),
\]
That is, up to a determinant $d_n$, the orbital
integral of $X$ of a collection of test functions indexed by $\Y$ can
be expanded in terms of the Shalika germs weighted by the nilpotent
orbital integrals of the test functions.

If we specialize the data to a nonarchimedean field $F$, take $z\in
Z(F)$, and choose $n=k$ be the number $k$ of nilpotent orbits of $G_z(F)$,
then it follows from Theorem~\ref{thm:bm} that $\NF^n(F)$ is nonempty,
and for all $\Y\in \NF^n(F)$, we have $\det\Theta_F(\Y)$ is
nonzero. In fact, by suitable ordering of indices, the matrix is upper
triangular with nonzero diagonal entries.  We obtain the Shalika germ
expansion in its usual form.
\end{proof}

\subsection{Strategy}\label{sec:strategy}

We now prepare for the proof of Theorem~\ref{thm:local}.  This is the
most intricate proof in the article.  It will occupy
Subsections~\ref{sec:strategy} through \ref{sec:emsg}.

Let $X_H,\bar X_H,\bar X_G$ be parameters that appear in the
transfer factor.  For a given $X_H$, we let $X_G$ be the image of $X_H$ in
$\fc_{\fg^*}$ under the morphism
\[
\fh \to \fc_{\fh} \to \fc_{\fg^*},
\]
and let $O^{st}(X_G)$ be the preimage of $X_G$ in $\fg$.  In what
follows, the stable orbit $O^{st}(X_G)$ is assumed always to be
derived from the parameter $X_H$ in this manner.  As described above,
we have invariant motivic measures $d\mu^\reg_G$ and $d\mu^\reg_H$ on
$O^{st}(X_G)$ and $O^{st}(X_H)$.

We define $\kappa$-orbital integrals by the following equation:
\begin{equation}\label{eqn:kappa}
O^\kappa(X_H,\bar X_H,\bar X_G,f) = \int_{x\in O^{st}(X_G)}
\Delta(X_H,x,\bar X_H,\bar X_G) f (x)\,d\mu^\reg_G,
\end{equation}
where $f$ is a constructible function on $\fg$ and the parameters
$(X_H,\bar X_H,\bar X_G)$ run over the definable set $V$ in Equation
\ref{eqn:delta-domain} (omitting the factor for $X_G$).  Similarly, we
define stable orbital integrals on $\fh$ by
\begin{equation}\label{eqn:stable}
SO(X_H,f^H) = \int_{x\in O(X_H)} f^H (x)\,d\mu^\reg_H,
\end{equation}
where $f^H$ is a constructible function on $\fh$.

Waldspurger has proved that the germs of $O^\kappa$ are linear
combinations of the germs of $SO$ in Equation~\ref{eqn:stable} when
the field $F$ has characteristic zero~\cite{waldspurger1997lemme}.  We
wish to transfer these linear relations among germs to positive
characteristic.

The basic naive strategy is to invoke Theorem~\ref{thm:cgh-asymp}
directly to transfer these linear relations among germs.
Unfortunately, the naive strategy does not work, because the
coefficients of the linear relation potentially vanish identically on
part of the cocycle space $Z$, which would mean that we would obtain
no information about the endoscopic matching for some of the twisted
forms of a reductive group.  We note that it is not always possible to
isolate definably a single isomorphism class of reductive groups $G$.
See Section~\ref{sec:classification}.

The next (less naive) strategy is to take a product $G =
G_1\times\cdots\times G_r$ over all of the groups up to isomorphism
that appear as fibers over the cocycle space $Z$.  On this product,
the corresponding cocycle space does in fact determine a single group
up to isomorphism.  The germs on this product are the products of
germs of the factors.  We can invoke the theorem to transfer linear
relations for this product of groups.  Unfortunately, this strategy
also fails, because all of the $\kappa$-Shalika germs might vanish
identically for one of the factors; and when this happens, we cannot
derive any information from the product about the other individual
factors $G_i$.

This nonetheless, leads to a strategy that works.  Again we take
$r$-fold products of factors, but we enhance our collection of factors
to include the stable Shalika germs on the quasi-split endoscopic
groups (Equation~\ref{eqn:psi}).  We know how to choose stable Shalika
germs that are nonzero.  Indeed the stable Shalika germ of the regular
nilpotent class in a quasi-split inner form is nonzero.  Thus, we are
able to avoid the bad situation where one of the factors vanishes
identically.  This works.

The proof will use the following elementary facts about tensor
products of finite dimensional vector spaces in our analysis of the
products of Shalika germs.  The second statement may be used to
extract a nontrivial linear relation on a single factor from a linear
relation on the tensor product.

\begin{lem}\label{thm:tensor}
Let $V_1,\ldots,V_r$ be finite dimensional vector spaces of
dimensions $\dim(V_i) = n_i$.  
\begin{enumerate}
\item 
Let $S_i\subset V_i$ be a finite set of vectors for each
  $i = 1,\ldots,r$. 
If the product of the cardinalities of the
  sets $S_i$ is $n_1\cdots n_r$ and if the tensors $w_{i_1}\otimes
 \cdots\otimes w_{i_r}$, for $w_{i_j}\in S_j$, span 
$V_1\otimes\cdots\otimes V_r$, then
  for each $i$, the set $S_i$ is a basis of $V_i$.
\item For each $i=1,\ldots,r$, let $W_i$ be a subspace of $V_i$.  Let
  $v_{1}\otimes \cdots \otimes v_{r} \in W_1\otimes\cdots\otimes W_r$,
  with $v_{i}\in V_i$.  If for some $i$, we have $v_{j}\ne0$, for all
  $j\ne i$, then $v_{i}\in W_i$.
\end{enumerate}
\end{lem}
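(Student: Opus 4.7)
The plan for part (1) is to combine a spanning argument with the dimension equality. First I observe that every generator $w_{i_1} \otimes \cdots \otimes w_{i_r}$ of the hypothesised spanning set lies in the subspace $\mathrm{span}(S_1) \otimes \cdots \otimes \mathrm{span}(S_r)$ of $V_1 \otimes \cdots \otimes V_r$. Since this subspace is assumed to equal the full tensor product, and since a tensor product $U_1 \otimes \cdots \otimes U_r$ of subspaces equals $V_1 \otimes \cdots \otimes V_r$ only when each $U_i = V_i$ (a dimension count, assuming the trivial case $n_i=0$ is set aside), I conclude that each $S_i$ spans $V_i$, forcing $|S_i| \ge n_i$. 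The hypothesis $\prod_i |S_i| = \prod_i n_i$ then forces equality $|S_i| = n_i$ for every $i$, and a spanning set whose cardinality equals the dimension is a basis.

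For part (2) my plan is to contract away every factor except the $i$-th by means of a separating linear functional. Because $v_j \neq 0$ for each $j \neq i$, I can choose $\ell_j \in V_j^*$ with $\ell_j(v_j) \neq 0$. Let $\phi : V_1 \otimes \cdots \otimes V_r \to V_i$ be the slice map $\ell_1 \otimes \cdots \otimes \ell_{i-1} \otimes \mathrm{id}_{V_i} \otimes \ell_{i+1} \otimes \cdots \otimes \ell_r$. On a pure tensor, $\phi$ sends $w_1 \otimes \cdots \otimes w_r$ to $\bigl(\prod_{j \neq i} \ell_j(w_j)\bigr)\, w_i$, so $\phi$ carries $W_1 \otimes \cdots \otimes W_r$ into $W_i$. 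Applying $\phi$ to the hypothesised tensor yields $c \cdot v_i$ with $c = \prod_{j \neq i} \ell_j(v_j) \neq 0$, so $v_i = c^{-1}\phi(v_1 \otimes \cdots \otimes v_r) \in W_i$.

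Neither part presents a genuine obstacle; both arguments are purely linear-algebraic and do not depend on any of the motivic machinery developed earlier. The only subtleties worth flagging are the well-definedness of the slice map $\phi$ in part (2), which is standard but should be justified by defining it on elementary tensors and extending by linearity, and the implicit use in part (1) that for subspaces $U_i \subseteq V_i$ the canonical map $U_1 \otimes \cdots \otimes U_r \to V_1 \otimes \cdots \otimes V_r$ is injective with image of dimension $\prod_i \dim U_i$, so that equality of the tensor product with the whole space forces equality factor by factor.
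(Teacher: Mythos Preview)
Your proof is correct. The paper does not actually give a proof of this lemma beyond the single sentence ``This is elementary multilinear algebra,'' so your argument simply supplies the details the authors omitted; both parts are handled by exactly the sort of standard dimension count and slice-map contraction one would expect.
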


\begin{proof} This is elementary multilinear algebra.
\end{proof}

In what follows, to use this lemma, we regularly identify finite
dimensional subspaces of $C(U_1\times\cdots\times U_r)$ of the
continuous functions on $U_1\times\cdots U_r$, for various choices of spaces
$U_i$, with corresponding finite dimensional subspaces of the tensor
product $C(U_1)\otimes \cdots\otimes C(U_r)$, under the map
\[
C(U_1)\otimes \cdots\otimes C(U_r)\to C(U_1\times\cdots\times U_r),
\]
often without explicit mention.  We recall that the orbital integral
of $f_1\otimes\cdots\otimes f_r$ on a product
$\fg_1\times\cdots\times \fg_r$ is a product of the corresponding
orbital integrals on the factors.

\subsection{Parameter sets}

We establish notation and various parameters that will be used in
the proof of Theorem~\ref{thm:local}.  We make all the fixed choices
that are established in Section~\ref{sec:fixed}.  Let $(G,H)_{/Z}$ be
a definable reductive group $G$ with definable endoscopic group $H$
over a cocycle space $Z$.  We remain in this context until the end of
Section~\ref{sec:emsg}.

\subsubsection{Finiteness of the parameter set}\label{sec:finiteness}

The order in which we construct our parameters is extremely important
in the proof.  Later, in Subsection~\ref{sec:field}, we will make
choices of constants $r\in\ring{N}$, and $r$-tuples: $k$, $k'$, $\s$,
and $\s'$ that will be field dependent.  Before we do that, in this
subsection, we bound these parameters so that they run over finitely
many possibilities.  That is, we define a finite set $\Xi$ that
contains all possible parameter choices $\xi=(r,k,k',\s,\s')\in\Xi$.
It is important to work with the full finite set $\Xi$ of
possibilities until Subsection~\ref{sec:field}.

In this paragraph, we define a finite set $\Xi$ of parameters
$\xi=(r,k,k',\s,\s')$ by placing boundedness constraints on
$r,k,k',\s,\s'$.  (We do not need sharp bounds in this paragraph; we
are simply establishing a finiteness result.)  We constrain
$r\in\ring{N}$ to be at most the lim sup over $m$ of maximum over
$F\in\Loc_m$ of $r_F$, where $r_F$ is the number of isomorphism
classes of reductive groups $G_z$ obtained as $z$ runs over $Z(F)$.
That is, it should be at most the largest number of isomorphism
classes of reductive groups in the family $G$ in large residual
characteristic.  The $r$-tuple $k\in\ring{N}^r$ has components $k_i$
that are constrained to be at most the largest number of nilpotent
orbits there can be in sufficiently large residual characteristic in a
Lie algebra of some group in the family $G$.  (See
Corollary~\ref{thm:nilbound}.) We constrain the tuple $\s$ to be an
$r$-tuple, where each component $\s_i$ is a $k_i$-tuple of elements of
$\cF_G$. Recall that $\cF_G$ is a fixed choice that does not depend on
the cocycle space $Z$.  The cardinality of $\cF_G$ is finite and field
independent.  The tuple $k'\in\ring{N}^r$ is similarly constrained to
have components $k'_i$ that are at most the largest number of
nilpotent orbits there can be in sufficiently large residual
characteristic in a Lie algebra of some group in the family of
endoscopic groups $H$.  The $r$-tuple $\s'$ is constrained as $\s$,
but using $k'$ and $\cF_H$.  By imposing finiteness constraints as we
have on $r$, $k$, $k'$, $\s$, and $\s'$, the parameters
$\xi=(r,k,k',\s,\s')$ range over a finite set $\Xi$ of possibilities.

\subsubsection{Parameter sets that depend on $\xi$}

Let $\xi=(r,k,k',\s,\s')\in\Xi$ be any parameter in
$\Xi$.  Until Subsection~\ref{sec:field}, the element $\xi$ remains
a fixed but arbitrary element of $\Xi$.  We often omit $\xi$ and its
(fixed but arbitrary) components $r$, $k$, $k'$, $\s$, and $\s'$ from
the notation.  We will give definable sets $P$, $\Lambda$, a definable
morphism $g:P\times\ring{Z}\to \Lambda$, and a tuple of constructible
functions $\bf$ on $P\times\ring{Z}$.  Let $Z^r$ be the definable set
of $r$-tuples $(z_1,\ldots,z_r)$, where $z_i \in Z$ and such that
$z_i$ and $z_j$ are not cohomologous for $i\ne j$.

The Lie algebras $\fh_z$, $\fg_z$ depend on a parameter $z\in Z$.
For $i=1,\ldots,r$, we have spaces
\begin{align*}
\Lambda^1_i = &\{(z,\bar X_H,\bar X_G,\Y_G,\Y_H) \mid \\
     &\quad z\in Z, 
\bar X_H \in \fh_z, \bar X_G\in \fg_z, 
\Y_G\in \NF^{k_i}_{G_z,\s_i}, \Y_H\in \NF^{k'_i}_{H_z,\s'_i}, \Y_{H,1}:\op{reg}\}.
\end{align*}
(We apologize for the nesting of subscripts.)  The condition
$\Y_{H,1}:\op{reg}$ means that we constrain the nilpotent part $N$ of
the Moy-Prasad pair $\Y_{H,1}$ to be a regular element.  We write
$z(\lambda_i)$, $\Y_G(\lambda_i)$, and so forth for the components of
$\lambda_i\in\Lambda^1_i$.  For $i=1,\ldots,r$, we set
\[
\Lambda^r = \Lambda^r_\xi = 
\{(\lambda_1,\ldots,\lambda_r) \mid \lambda_i \in \Lambda^1_i\quad
  (z(\lambda_1),\ldots,z(\lambda_r))\in Z^r \}.
\]
If $j\in\ring{N}$ and $\epsilon\in\{\pm\}$, write
\[
\Y(\lambda_i,(j,\epsilon)) = 
\begin{cases}
\Y_G(\lambda_i)_j,&\text{if }\epsilon= -,\quad j\le k_i,\\
\Y_H(\lambda_i)_j,&\text{if }\epsilon= +,\quad j\le k'_i.
\end{cases}
\]
We
define:
\[
P^1_i = \{(X_H,\lambda_i) \mid \lambda_i\in \Lambda^1_i,
\quad z = z(\lambda_i),\quad X_H \in \fh_z\}.
\]
Also, set:
\[
P^r = P^r_\xi = \{(X_H,\lambda) \mid \lambda = 
(\lambda_1,\ldots,\lambda_r)\in \Lambda^r_\xi,\quad
  X_H = (X_{H,1},\ldots,X_{H,r}), \quad (X_{H,i},\lambda_i) \in P^1_i\}.
\]
We have a projection map $P^r\to \Lambda^r$ onto the second coordinate.

Set
$L_i = L^+_i\sqcup L^-_i$, where $L^-_i =
\{1,\ldots,k_i\}\times\{-\}$ and $L^+_i =
\{1,\ldots,k'_i\}\times\{+\}$.
Set
\[
L = \{\ell = (\ell_1,\ldots,\ell_r)\mid \ell_i\in L_i\},\quad\text{and}\quad
L^+ = \{\ell = (\ell_1,\ldots,\ell_r)\mid \ell_i\in L^+_i\}.
\]

We introduce a parameter $a\in\ring{Z}$ to construct germs of
functions as follows.  Let $1_a$, for $a\in\ring{Z}$ be a sequence of
support functions, defined as the preimage in $\fg$ of indicator
functions of small balls around $0$ in $\fc_{\fg^*}$ tending to $0$ as
$a$ tends to infinity.  We take the preimage of these balls in
$\fc_{\fh}$ and $\fh$ and denote that support sequence $1_a$ as well,
by abuse of notation.  We may truncate the orbital integrals using
these functions: $1_a SO(X,f^H)$ and $1_a O^\kappa(X_H,\bar
X_H,\bar X_G,f)$.  Integrals now have an extra integer parameter
$a\in\ring{Z}$ that we may use to study $a$-asymptotic linear
dependence.  

We define constructible functions.  For $(X_H,\lambda)\in
P^r_\xi$,  for $i=1,\ldots,r$, and $\ell_i\in L_i$,
set
\begin{equation}\label{eqn:psi}
\psi_{i,\ell_i}(X_{H,i},\lambda_i) = 
\begin{cases}   
  O^\kappa(X_{H,i},\bar X_H(\lambda_i),\ldots,1_{\Y(\lambda_i,\ell_i)}),
   & \text{if } \ell_i\in L^-_i \\
  SO(X_{H,i},1_{\Y(\lambda_i,\ell_i)}),
   & \text{if } \ell_i\in L^+_i.
\end{cases}
\end{equation}
We also define a tuple $\bf=(\ldots,1_a f_\ell,\ldots)$ of constructible
functions on $P^r_\xi\times\ring{Z}$ indexed by $\ell\in L$:
\[
((X_H,\lambda),a)\mapsto 1_a f_\ell(X_H)_\lambda 
:= 1_a \prod_{i=1}^r \psi_{i,\ell_i}(X_{H,i},\lambda_i).
\]

In summary, the tuple $\bf$ is indexed by $\ell$ which is a tuple
that, when $\ell$ is in $L^-$, enumerates the nilpotent orbits in
$\fg_{z(\lambda_1)}\times \cdots\times \fg_{z(\lambda_r)}$, and when
$\ell$ is in $L^+$, enumerates the stable nilpotent distributions on
$\fh_{z(\lambda_1)}\times \cdots\times \fh_{z(\lambda_r)}$.  The
components of $\bf$ (the functions $f_\ell$) mix the reductive group
and its endoscopic group together.  Each function $f_\ell$ is an
orbital integral on $\fg'_{z(\lambda_1)}\times \cdots\times
\fg'_{z(\lambda_r)}$, where each factor $\fg'_{z(\lambda_i)}$ is
$\fh_{z(\lambda_i)}$ or $\fg_{z(\lambda_i)}$, depending on which side
of the disjoint union the parameter $\ell_i\in L_i = L^+_i\sqcup
L^-_i$ lands.  These orbital integrals are evaluated at the
($r$-tuples of) test functions associated with Barbasch-Moy pairs as
in Section~\ref{sec:msg}. 

The condition $\Upsilon_{H,i,1}:\op{reg}$ (in the
definition of $\Lambda^1_i$) ensures that one of the nilpotent classes
on each endoscopic factor is regular, which will be needed later to
make sure that the corresponding stable Shalika germ does not vanish.
The mixing of factors $\fg'_{z(\lambda_i)}$ allows us to take the
regular nilpotent (nonzero) stable germ from the endoscopic group on every factor
but one and an arbitrary $\kappa$-germ on the remaining factor in
order to isolate a given $\kappa$-germ.

\subsubsection{Field dependent choices}\label{sec:field}

Recall that $\xi=(r,k,k',\s,\s')$ denotes an arbitrary element in the
finite set $\Xi$.  For each $\xi\in\Xi$, we have produced definable
sets $P=P^r_\xi$, $P'=P\times\ring{Z}$, $\Lambda=\Lambda^r_\xi$, a
definable morphism $g:P\times\ring{Z}\to \Lambda$, and a tuple of
constructible functions $\bf$ on $P'$.  For each $\xi$, we invoke
Theorem~\ref{thm:cgh-asymp} on these data to obtain a constant
$m(\xi)\in\ring{N}$ with properties as described in the theorem.
Recall that $m(\xi)$ is the parameter that gives the restriction on
the residual characteristic in the nonarchimedean field.  We let $m$
be the maximum of the constants $m(\xi)$ as $\xi$ runs over the finite
set $\Xi$.

As a result of this discussion, we have a constant $m$ that is not
tied to any particular parameter $\xi$.  This constant will be used to
satisfy the claim in the statement of Theorem~\ref{thm:local} of the
existence of a constant $m$ with desirable properties.

Let $F_2\in\op{Loc}_m$.  Choose a second field $F_1\in \op{Loc}_m$
that has characteristic zero and a residue field isomorphic to that of
$F_2$.  We will use the transfer principle between these two fields.

Now that $m$, $F_1$, and $F_2$ have been constructed, we make a
particular choice of $\xi\in\Xi$.  (In particular, from this point
forward, $\xi=\xi_{m,F_1,F_2}$ and its components $r,k,k',\s,\s'$ will no longer
denote arbitrary elements drawn from a finite set $\Xi$, but will
denote choices of parameters that have been adapted to the specific
nonarchimedean context of $m$, $F_1$, and $F_2$.)

We start with $r$.  Let $r$ be the
maximum for which $Z^r(F_1)\ne\emptyset$.  That is, it is the number
of nonisomorphic reductive groups in the family of cocycles.  This
maximum can be expressed as a sentence in the Denef-Pas language and
can hence be transferred to $F_2$ by an Ax-Kochen style transfer
principle.  Thus, $r$ is also the maximum for which
$Z^r(F_2)\ne\emptyset$.


Pick any $z\in Z^r(F_2)$.  Write $G_i=G_{z_i}$.  Let
$k=(k_1,\ldots,k_r)$ be a tuple such that $k_i$ is the number of
nilpotent orbits in $\fg_{z_i}(F_2)$.  Choose a $k_i$-tuple $\s_i\in
\cF_{G_i}^{k_i}$ such that $\NF^{k_i}_{G_i,\s_i}(F_2)\ne \emptyset$.
Let $\s = (\s_1,\ldots,\s_r)$.

Finally, we choose parameters $k'$ and $\s'$ as follows.  Pick $z\in
Z^r(F_1)$.  By a transfer principle, we may assume that $z$ is
selected in such a way that $\NF^{k_i}_{G_i,\s_i}(F_1)\ne \emptyset$,
where $G_i=G_{z_i}$, for $i=1,\ldots,r$.  Write $H_i = H_{z_i}$. Let
$k'=(k'_1,\ldots,k'_r)$ be a tuple such that $k'_i$ is the dimension
of the space of stable Shalika germs on $\fh_{z_i}(F_1)$.  By the
definition of stable distribution, this equals the dimension of the
space of stable orbital integrals supported on the nilpotent set.  For
each $i$, let $\mu_{ij}^{st}$, $j = 1,\ldots,k'_i$, be a basis of the
stable distributions supported on the nilpotent set of $\fh_{z_i}(F_1)$.
We have a Shalika germ expansion
\[
SO(X_H,f^H) = \sum_{j=1}^{k'_i} \Gamma^{st}_{ij}(X) \mu_{ij}^{st}(f^H),
\]
for compactly supported functions $f^H$ on $\fh_{z_i}(F_1)$ and some
stable germs $\Gamma^{st}_{ij}$.  The germ expansion holds on $G$-regular
semisimple elements in a suitable neighborhood (depending on $f^H$).

For each $i\le r$, we define a tuple $\Y_i :=
(\Y_{i1},\ldots,\Y_{ik'_i})\in \NF^{k'_i}_{H_{i}}(F_1)$ as follows.
We pick parameters $\Y_{ij}$ such that the functions $1_{\Y_{ij}}$,
for $j=1,\ldots,k'_i$, form a dual basis to $\mu_{ij}^{st}$ (with $i$
fixed and $j$ variable).  (The indicator functions for Moy-Prasad
pairs span the space dual to invariant distributions with nilpotent
support, hence span the quotient space dual to stably invariant
distributions with nilpotent support; hence a subset of the functions
forms a dual basis.)  We can do this by Theorem~\ref{thm:bm}.  Note
that there is a unique stable distribution supported on the regular
nilpotent set.  We may pick the dual space in a way that the nilpotent
component of $\Y_{i1}$ is regular.

Let $\s'_i$ be the tuple of second coordinates of $\Y_i$.
Set $\s' = (\s'_1,\ldots,\s'_r)$.  We have now fixed the parameter
$\xi=(r,k,k',\s,\s')$.
 
\subsection{Endoscopic matching of Shalika germs}\label{sec:emsg}

We now give the proof of Theorem~\ref{thm:local}, adapted to Lie
algebras.  In Section~\ref{sec:adapt}, we show that the proof can be
easily adapted to the groups.  Here, we work in the context of the
previous subsection.

\begin{proof}[Proof of Theorem~\ref{thm:local}, adapted to Lie algebras]
We return to our earlier notation of $z$ as a
parameter running over the definable set $Z^r$ (as opposed to an $F$-point
of that set).

We write $\Lambda^r(F_1) = \Lambda_A \cup\Lambda_B$, where
\[
\Lambda_A = \{\lambda\in\Lambda^r(F_1)\mid
\exists a_0\in\ring{Z},\ (\ldots,1_af_{\ell,\lambda,F_1},\ldots), \text{ for } \ell\in L^+,
\quad\text{is } a_0\text{-asymp. lin. dep.} \}.
\]
and $\Lambda_B = \Lambda^r(F_1)\setminus \Lambda_A$.  

We claim that the number $a_0$ that appears in the definition of $\Lambda_A$
can be chosen to be independent of $\lambda\in\Lambda_A$.  Indeed,
there are only finitely many different situations up to isomorphism,
and we can simply choose the maximum $a_0$ among finitely many cases.

For $\lambda\in \Lambda_B$, there does not exist an $a_0$ for which
the given tuple of functions is $a_0$-asymptotically dependent.  Note
that the number of stable distributions with nilpotent support on the
product of $\fh_{z(\lambda_i)}(F_1)$ does not depend on $\lambda\in
\Lambda^r(F_1)$, since different choices of $\lambda$ merely permute
the factors $\fh_{z(\lambda_i)}(F_1)$.  This number is $k'_1\cdots k'_r$,
the cardinality of $L^+$.  We invoke Lemma~\ref{thm:tensor}(1) to see
that for all $i$, the functions
\begin{equation}\label{eqn:1a}
1_a\psi_{i,\ell_i}(\cdot,\lambda_i)_{F_1},\quad \text{ for }\ell_i\in L^+_i,
\end{equation}
are also not $a_0$-asymptotically dependent for any $a_0$.  Thus, the
functions $1_a 1_{Y(\lambda_i,\ell_i),F_1}$ span the space dual to the
space of stable nilpotent distributions on $\fh_{z_i(\lambda)}(F_1)$.
We use Waldspurger's fundamental result \cite{waldspurger1997lemme}
that the $\kappa$-orbital integral (over $F_1$ of characteristic zero)
admits smooth matching. This implies that for any test function on any
factor $\fg_{z_i(\lambda)}$, there exists $a_0$ such that the
$\kappa$-orbital integrals of the test function satisfy an
$a_0$-asymptotic relation with the functions in Formula \ref{eqn:1a}.
If we constrain the test functions to be $1_\Y$ for Moy-Prasad pairs,
then as we vary the parameters, there are only finitely many
situations up to isomorphism.  Thus, we may pick a single $a_0$ that
works for all $\lambda\in\Lambda_B$.

We claim that for all ${\bar\ell} \in L\setminus L^+$, there exists
$a_0\in\ring{Z}$, such that for each $\lambda\in \Lambda^r(F_1)$, the
tuple of functions $1_{a_0,F_1}f_{\ell,\lambda,F_1}$, indexed by
$\ell\in \{{\bar\ell}\}\cup L^+$, is $a_0$-asymptotically linearly
dependent.  We may check this claim separately for $\Lambda_A$ and
$\Lambda_B$ and take the maximum of the constants $a_0$ that we
obtain.  For $\Lambda_A$ we have already checked that such a constant
can be chosen, even without including $\bar\ell$.  For $\Lambda_B$, we
have checked that we have an $a_0$-asymptotic linear dependency that
expresses the function with index $\bar\ell_i$ in terms of the others
for each $i=1,\ldots,r$. By taking tensor products of the relations,
we obtain a linear relation on the tuple of functions indexed by $\ell\in
\{{\bar\ell}\}\cup L^+$.  This gives the claim.

In fact, we may take $a_0\in\ring{Z}$ to be independent of $\bar\ell$,
because ${\bar\ell}\in L\setminus L^+$ runs over finitely many
possibilities.

We apply Theorem~\ref{thm:cgh-asymp} to the morphism
$f:P^r\times\ring{Z}\to \Lambda^r$.  By the theorem, for each ${\bar\ell}\in
L\setminus L^+$, and for all $\lambda\in \Lambda^r({F_2})$, the
functions $f_{\ell,\lambda,F_2}$, for $\ell\in \{{\bar\ell}\}\cup L^+$ are
$a_0$-asymptotically linearly dependent.  Let $c_{{\bar\ell},\ell,\lambda}\in
\ring{C}$ be the coefficients of a nontrivial linear combination
(depending on $\lambda$).  We write this relation as
\begin{equation}\label{eqn:du}
\sum_{\ell\in \{{\bar\ell}\}\cup L^+} c_{{\bar\ell},\ell,\lambda}1_af_{\ell,\lambda,F_2}  =
0\quad\text{for }\quad a\ge a_0.
\end{equation}
Note that we may assume that
$\op{supp}(1_{a'})\subseteq\op{supp}(1_a)$ if $a'\ge a$, so that when
we pass to a larger $a\in\ring{Z}$, a given linear relation still
holds on the smaller support. Thus, we may take the coefficients
$c_{{\bar\ell},\ell,\lambda}$ to be independent of $a$.  For each
${\bar\ell}$ and $\lambda$, the nontriviality of the relation means
that $c_{{\bar\ell},\ell,\lambda}\ne 0$ for some $\ell$.

We claim that there exists $\lambda\in \Lambda^r({F_2})$, such that
for all ${\bar\ell}\in L\setminus L^+$,
$c_{{\bar\ell},{\bar\ell},\lambda}\ne 0$.  Otherwise, for all
$\lambda\in \Lambda^r(F_2)$, there exists ${\bar\ell}\in L\setminus
L^+$ such that $c_{{\bar\ell},{\bar\ell},\lambda}=0$.  For these
choices, Equation~\ref{eqn:du} becomes
\begin{equation}\label{eqn:du2}
\sum_{\ell\in L^+} c_{{\bar\ell},\ell,\lambda}1_af_{\ell,\lambda,F_2}  = 0.
\end{equation}
This asserts that for all $\lambda\in \Lambda^r(F_2)$, the functions
$1_af_{\ell,\lambda,F_2}$, for $\ell\in L^+$, are linearly dependent
for $a\ge a_0$.  Applying Theorem~\ref{thm:cgh-asymp} again in the
opposite direction, to go from dependence on $F_2$ to linear
dependence on $F_1$, we find that for all $\lambda'\in
\Lambda^r(F_1)$, the functions $1_af_{\ell,\lambda',F_1}$, for
$\ell\in L^+$ are linearly dependent.  There exists
$\lambda'\in\Lambda^r(F_1)$ such that $z(\lambda')\in Z^r(F_1)$ is the
parameter used to construct $k'$ and $\s'$.  This contradicts the
choice of $k',\s'$, which were chosen to give linear independence.
This gives the claim.

Let $\lambda\in \Lambda^r(F_2)$ be the parameter in the claim.  The
$r$-tuples $k$ and $\s$ were constructed using some different
parameter $z\in Z(F_2)$.  However, different choices of cocycles
merely permute the constants $k_i$.  Hence, the product $k_1\cdots
k_r$ is independent of $z$.  Since $\Y_G(\lambda_i)\in
\NF^{k_i}:=\NF^{k_i}_{G_i,\s_i}(F_2)$, where $G_i = G_{z(\lambda_i)}$,
and since the $\NF^{k_i}$ enforces non-conjugacy conditions on its
components, we see $G_i(F_2)$ has at least $k_i$ nilpotent orbits.
Since the product $k_1\cdots k_r$ is fixed, in fact, $G_i(F_2)$ has
exactly $k_i$ nilpotent orbits.  We conclude that the tuples $k$ and
$\s$ are compatible with the parameter $z(\lambda)\in Z(F_2)$.

We write $\Y_H(\lambda)=(\Y_1,\ldots,\Y_r)$.  For each $i=1,\ldots,r$,
the definition of $\lambda_i\in\Lambda^1_i$ constrains $\Y_{i1}$ to be
associated with a regular nilpotent orbit.  Thus, the index $(1,+)\in
L^+_i$ represents the regular nilpotent orbit in the Lie algebra
$\fh_{z_i}(F_2)$ of the endoscopic group.

The nonvanishing $c_{{\bar\ell},{\bar\ell},\lambda}\ne 0$, gives for
each ${\bar\ell}\in L\setminus L^+$ a linear relation for
$1_af_{{\bar\ell},\lambda,F_2}$ in terms of products of stable orbital
integrals on factors $\fh_{z_i}(F_2)$.  In particular, for each
$i\in\{1,\ldots,r\}$, and for each $j\in\{1,\ldots,k_i\}$, we define
${\bar\ell} = {\bar\ell}(i,j) = ({\bar\ell}_1,\ldots,{\bar\ell}_r)$,
where
\[
{\bar\ell}_{i'} = \begin{cases} (1,+), &\text{if } i'\ne i\\
       (j,-) & \text{if } i' = i.
  \end{cases}
\]
It is known by Langlands and Shelstad that when suitably normalized,
the Shalika germ of the stable regular nilpotent class is identically
$1$~\cite{LSxf}.  In particular, it is nonzero.  Considering the
linear relation involving $1_af_{{\bar\ell},\lambda,F_2}$ as a
function of $X_{H,i}$ alone, it gives a nontrivial linear relation
between functions
$X_{H,i}\mapsto\psi_{i,\bar\ell_i}(X_{H,i},{\lambda})$ (with nonzero
coefficient) and the stable nilpotent orbital integrals on
$\fh_{z_i}(F_2)$.  Lemma~\ref{thm:tensor}(2) extracts the linear
relation on a factor from a linear relation on the tensor product.  As
we run over all $j$ and $i$, we obtain the desired matching of all
$\kappa$-Shalika germs on all reductive groups in fibers over the
cocycle space $Z(F_2)$.  This proves the theorem.
\end{proof}

\subsection{Adaptation from nilpotence to unipotence}
\label{sec:adapt}

We now adapt the results of the previous two sections to unipotent
classes in the group.

\begin{proof}[Proof of Theorem \ref{thm:gtf}]
  All of the proofs go through with the following changes.  Functions
  on the Lie algebra (Shalika germs, orbital integrals) are replaced
  with functions on a neighborhood of the identity element in the
  group.  We use notation $\gamma_G,\gamma_H,\bar \gamma_G,\bar
  \gamma_H$, and so forth for elements in the appropriate groups,
  instead of $X_G,X_H,\bar X_G,\bar X_H$.

The choice of $a$-data becomes $a_\alpha = \alpha(\gamma)^{1/2} -
\alpha(\gamma)^{-1/2}$ (instead of $\alpha(X)$).  For this, we
restrict $\gamma$ to the set of topologically unipotent elements (in large
residual characteristic), on which square roots can be extracted.

We may assume that the multiplicative characters that occur in the
transfer factor are trivial near the identity element. Near the
identity element, we have $\Delta_{\rom{3}_2}=1$.

The Kostant section is replaced with the Steinberg section in the
quasi-split inner form.  In the construction of transfer factors in
\cite{CHL}, the compatibility of the Kostant section with the transfer
factor is used, as proved by Kottwitz \cite{Kott}.  In this article we
have not used the compatibility with transfer factors, and we do not
need to know whether the Steinberg section is compatible with the
transfer factor.  In fact, we could replace the explicit Steinberg
section with an existential assumption of a section.  This is because
the transfer factor is independent of the choices made in the
quasi-split inner form.
\end{proof}

\begin{proof}[Proof of Theorem \ref{thm:local} for groups] 
  The proof already given in Section~\ref{sec:emsg} for the Lie
  algebra is readily adapted to the group.

  Nilpotent elements are replaced with unipotent elements.  In large
  residual characteristic, the exponential map is defined on the set
  of nilpotent elements and gives a bijection between nilpotent
  classes in the Lie algebra and unipotent elements in the group.  The
  exponential map is polynomial and shows that the set of unipotent
  elements is a definable set.

  The set of parahoric subalgebras is replaced with the set of parahoric
  subgroups.  We use Theorem~\ref{thm:bmg} for the properties of the
  Barbasch-Moy pairs.  We have structured our proofs in the nilpotent
  case in such a way that the proofs in the unipotent case apply
  with minimal changes.
\end{proof}

\subsection{Endoscopic matching of smooth functions on the group}
\label{sec:xfer}

In this section, we prove Theorem~\ref{thm:xfer}. 

In \cite{LSxf}, Langlands and Shelstad define the notion of matching
(transfer) of orbital integrals from a reductive group over a
nonarchimedean field to an endoscopic group.  In particular, they
introduce the transfer factors that are used for the matching.  In
\cite{LSd}, they take the further step of reducing the existence of
matching to a local statement at the identity in the centralizer of a
semisimple element.  We review this reduction below.  It is this
reduction that we will use in this section, in combination with the
endoscopic matching of Shalika germs from the previous section, to
establish endoscopic matching in sufficiently large characteristic.

In earlier sections, the primary focus was on constructible
functions.  In this section, the focus is the $p$-adic theory.

We recall some results from \cite{LSd}.  That article assumes that $F$
is a nonarchimedean field of characteristic zero.  We need to relax the
restriction on the characteristic to allow fields of large positive
characteristic.  We briefly sketch how the assumption on the
characteristic enters into the article \cite{LSd} and how it can be
relaxed.  The assumption of characteristic zero is used in the parts
of the article devoted to the archimedean places.  These sections of the
article do not concern us here.  The assumption is used in an essential
way in \cite{LSd}*{\S 6.6 Case 3} in a global argument used to deal
with fields of even residual characteristic.  Since we are assuming
large residual characteristic, we may disregard that case. The
complete reducibility of a module is used \cite{LSd}*{\S 5.5}. Again,
it is enough to assume the characteristic is large.  The cocycle
calculations, which form the bulk of the article, do not require the
assumption that the field has characteristic zero.

That article also cites other sources that assume that the
characteristic is zero.  We need Harish-Chandra's descent of orbital
integrals near a semisimple element, which holds in positive
characteristic by \cite{adler-korman}*{\S 7}. They apply descent to
Harish-Chandra character of a representation; but as they point out,
it applies in fact to any $G$-invariant distribution. Their result
relies on Harish-Chandra's submersion principle, which in positive
characteristic is proved by Prasad in
\cite{adler-debacker:mktheory}*{Appendix B}.  The article cites
\cite{LSxf}, which also assumes the field has characteristic zero. But
here again, the arguments are readily adapted to large
characteristics.

Let $G$ be a reductive group over a nonarchimedean field $F$,
and let $H$ be an endoscopic group of $G$.  We recall \cite[\S
2.1]{LSd} that $(G,H)$ admits {\it endoscopic $\Delta$-matching} if
for each $f\in C_c^\infty(G(F))$ there exists $f^{\tilde H}\in
C_c^\infty(\tilde H(F),\theta)$ such that $f$ and $f^{\tilde H}$
have\footnote{Langlands and Shelstad fix $\tilde{\bar \gamma}_H$ and
  $\bar\gamma_G$ in the transfer factor and then drop them from
  notation, so that only the first two variables of $\Delta$ are
  displayed.  Throughout most of \cite{LSxf}, they assume that the
  split extension of $\hat H$ by $W_F$ (the Weil group of $F$) is an
  $L$-group (\S1.2).  When they drop the assumption in \S4.4, the
  general transfer factor is denoted $\Delta$ and the local factor
  becomes denoted $\Delta_{loc}$.  We follow their conventions here.}
$\Delta$-matching orbital integrals:
\[
SO(\tilde \gamma_H,f^{\tilde H}) 
 = \sum_{\gamma_G} \Delta(\tilde\gamma_H,\gamma_G)O(\gamma_G,f)
\]
for all strongly $G$-regular $\tilde \gamma_H \in \tilde H(F)$.  The
sum is finite, running over representatives $\gamma_G$ of regular
semisimple conjugacy classes such that
$\Delta(\tilde\gamma_H,\gamma_G)\ne0$.  The group $\tilde H$ is
obtained as an admissible $z$-extension from $H$ as explained in
\cite[\S 4.4]{LSxf}. It has an associated multiplicative character
$\theta$ on $Z(H)(F)$.

We recall that $(G,H)$ admits {\it local endoscopic $\Delta$-matching}
if for any $f\in C_c^\infty(G(F))$ we can find $f^H \in C_c^\infty(H(F))$
such that
\[
SO( \gamma_H,f^{ H}) 
 = \sum_{\gamma_G} \Delta_{loc}(\gamma_H,\gamma_G)O(\gamma_G,f)
\]
for all strongly $G$-regular elements $\gamma_H$ near $1$ in $H(F)$.
We allow the size of the neighborhood of $1$ to depend on $f^H$.
In fact, it is enough to match finitely many functions that span the
dual to the space of invariant distributions supported on the
unipotent set.

For each semisimple element $\epsilon_H \in H(F)$ that is the image of
some $\epsilon_G\in G(F)$, Langlands and Shelstad construct an
endoscopic pair $(G_{\epsilon_G},H_{\epsilon_H})$ with corresponding
transfer factor $\Delta_\epsilon$.  The group $G_{\epsilon_G}$ is the
connected centralizer of $\epsilon_G$.

We will need the following result from \cite{LSd}*{2.3.A}.

\begin{theorem}[Langlands-Shelstad]  Suppose all pairs
  $(G_{\epsilon_G},H_{\epsilon_H})$ have local endoscopic
  $\Delta_\epsilon$-matching, then $(G,H)$ has endoscopic $\Delta$-matching.
\end{theorem}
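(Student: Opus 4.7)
The plan is to follow the Langlands-Shelstad reduction strategy from \cite{LSd}*{\S 2.3}, which transforms global $\Delta$-matching into a collection of local matching problems indexed by semisimple elements. The organizing principle is Harish-Chandra's descent for orbital integrals near a semisimple element, together with the descent theorem for transfer factors \cite{LSxf} that identifies $\Delta$ restricted to a neighborhood of $(\epsilon_H,\epsilon_G)$ with $\Delta_\epsilon$ on the descended pair $(G_{\epsilon_G},H_{\epsilon_H})$. Given $f\in C_c^\infty(G(F))$, I would organize the proof around a finite covering of $H(F)$ by invariant neighborhoods of the semisimple classes that support the $\kappa$-orbital integrals of $f$, handle each neighborhood by invoking the local matching hypothesis at the appropriate pair $(G_{\epsilon_G},H_{\epsilon_H})$, and then glue the resulting local matching functions into a single $f^{\tilde H}\in C_c^\infty(\tilde H(F),\theta)$.

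More precisely, first I would invoke compactness of the support of $f$ and properness of the image of semisimple classes under the characteristic polynomial map $G\to \fc_{G^*}$ to reduce to finitely many semisimple classes $\epsilon_G^{(1)},\dots,\epsilon_G^{(N)}$ of $G(F)$ whose images in $H$ lie in the support of the target matching. Around each such $\epsilon_H^{(j)}$, Harish-Chandra descent expresses $\gamma_H\mapsto O(\gamma_G,f)$ (viewed as a function of $\gamma_H$ via the norm correspondence) as a $\kappa$-orbital integral of a descended test function $f_{\epsilon_G^{(j)}}\in C_c^\infty(G_{\epsilon_G^{(j)}}(F))$, where the descent incorporates the local coefficient $\Delta_\epsilon$ via the Langlands-Shelstad descent identity for transfer factors. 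Applying local $\Delta_\epsilon$-matching to the pair $(G_{\epsilon_G^{(j)}},H_{\epsilon_H^{(j)}})$ yields $f_{\epsilon_G^{(j)}}^{\tilde H_{\epsilon_H^{(j)}}}$, which I then ascend (via the inverse of Harish-Chandra descent on the endoscopic side, combined with transport to $\tilde H$ and twisting by $\theta$) to a function $f_j^{\tilde H}$ defined on an invariant open neighborhood of $\epsilon_H^{(j)}$ in $\tilde H(F)$.

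The assembly step uses a $G$-invariant partition of unity on $H$ subordinate to the covering by these neighborhoods; since orbital integrals are $G$-invariant and the $\Delta$-matching identity is linear in $f^{\tilde H}$, summing the local contributions $\phi_j\cdot f_j^{\tilde H}$ yields the desired global $f^{\tilde H}$ with matching orbital integrals against all strongly $G$-regular $\tilde\gamma_H$. The character condition with respect to $\theta$ on $Z(\tilde H)(F)$ is preserved because each $f_j^{\tilde H}$ was chosen in $C_c^\infty(\tilde H(F),\theta)$ and the cutoff functions $\phi_j$ can be taken to pull back from $H(F)$.

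The main obstacle is the ascending step from the centralizer back to $\tilde H(F)$, and verifying that the partition-of-unity construction produces a genuinely compactly supported function whose orbital integrals agree with the prescribed sum over all of $\tilde H(F)$, not merely on each piece. This requires a careful tracking of the interaction between the $z$-extension $\tilde H\to H$, the character $\theta$, and the transfer factor $\Delta_{loc}$ versus the global $\Delta$, together with the verification that vanishing of $\Delta(\tilde\gamma_H,\gamma_G)$ outside the images of the chosen semisimple classes is preserved under the construction. Since this theorem is due to Langlands-Shelstad and our only task is to cite it (after checking that the ambient large-residual-characteristic hypothesis allows us to import it from \cite{LSd}), the work beyond referencing is limited to confirming that the cited arguments in \cite{LSd} transfer to large positive characteristic, as discussed in the preceding commentary of the excerpt.
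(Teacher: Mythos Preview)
The paper does not give its own proof of this theorem; it is stated as a black-box citation to \cite{LSd}*{2.3.A}, preceded only by a discussion of why the characteristic-zero hypotheses in \cite{LSd} can be relaxed to large positive characteristic (via Adler--Korman for descent of invariant distributions, Prasad for the submersion principle, and a remark that the cocycle computations in \cite{LSd} and \cite{LSxf} go through unchanged). Your concluding paragraph correctly identifies this: the actual task here is citation plus the characteristic check, and your sketch of the Langlands--Shelstad descent argument (Harish-Chandra descent on both sides, descent of transfer factors, partition of unity to patch) is a faithful outline of what \cite{LSd} does, but it is not something the present paper undertakes.
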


Conversely, if endoscopic matching holds on the entire group, then
local endoscopic matching holds in particular at the identity:
$\epsilon_G = 1$ and $\epsilon_H =1$.

Note that the admissible $z$-extensions are needed to formulate the
statement of endoscopic $\Delta$-matching, but they do not appear in
the statement of local endoscopic matching.  Thus, the admissible
$z$-extensions will play no part in our proof.

\begin{proof}[Proof of Theorem~\ref{thm:xfer}] 
Let $G$ be a definable reductive group over a cocycle
  space $Z$.

  Over a nonarchimedean field, there are only a finite number of endoscopic
  groups, up to conjugacy.  Also, there are only finitely many
  different centralizers that are obtained by descent, up to conjugacy
  \cite[\S 2.2]{LSd}.  We can state this uniformly as the field
  varies: there are finitely many definable connected reductive
  centralizers each having finitely many definable reductive
  endoscopic groups such that for all fields of sufficiently large
  residual characteristic, all the centralizers and their endoscopic
  groups are obtained from these finitely many definable groups by
  specialization to the field in question up to conjugation of
  centralizers and equivalence of endoscopic data.  For each separate
  definable pair $(G_{\epsilon_G},H_{\epsilon_H})$, we will obtain a
  natural number $m$ such that the our arguments work for all fields $F\in
  \op{Loc}_m$.  Then the maximum of all such $m$ will work for all
  descent data.

  By Theorem~\ref{thm:local}, for each endoscopic pair
  $(G_{\epsilon_G},H_{\epsilon_H})$, there exists $m$ such that we
  have local endoscopic $\Delta_\epsilon$ matching for all
  $F\in\op{Loc}_m$.  Note that the local endoscopic matching is
  a direct consequence of the endoscopic matching of Shalika germs.  This
  completes the proof.
\end{proof}

\section{Classification of definable reductive groups}
\label{sec:classification}

We rely on the classification of reductive groups from \cite{Gille},
\cite{Tits}, \cite{Sel}, \cite{Petrov}, \cite{reeder2010torsion},
closely following the presentation in \cite{Gross}.  All reductive
groups in this section are assumed to split over a tamely ramified
field extension.  We recall that we do not have access to a
Frobenius element of the Galois group, which is used in essential ways
in the classification.  We obtain a slightly weaker classification
using $\op{qFr}$, a fixed generator of $\Sigma^{unr}$.

We describe the extent to which nonisomorphic reductive groups over a
given field $F$ can have the same set of fixed choices as described in
Section~\ref{sec:fixed}.  For simplicity, assume that $G$ and $G'$ are
forms of a simple, simply connected, split group $G^{**}$ over $F$.
Assume that $G$ and $G'$ have identical fixed choices.  There are
obvious cases when this occurs:
\begin{enumerate}
\item $G$ and $G'$ are inner forms of $SL(n)$, and their invariants in
  the Brauer group have equal denominators, when expressed in lowest terms.
\item $G$ and $G'$ have isomorphic split outer forms, are not split,
  are quasi-split, and split over different ramified extensions of the
  same degree.  For example, we may have two quasi-split special
  unitary groups that split over different ramified quadratic
  extensions.
\item $G$ and $G'$ are not quasi-split, but are inner forms of
  quasi-split groups $G^*$ and $G'^{*}$ described in the previous
  situation (and that split over ramified quadratic extensions).  
  For example, we may have two nontrivial inner forms of
  quasi-split special unitary groups that split over different
  ramified quadratic extensions.
\end{enumerate}

\begin{rem}\label{rem:rem}
In the second case, the group of automorphisms of a connected
    Dynkin diagram, when nontrivial, is cyclic of prime degree except
    in the case of $D_4$, which has automorphism group $S_3$.  From
    this it follows that different tamely ramified extensions of the
    same degree have isomorphic Galois groups and isomorphic inertia
    subgroups.
\end{rem}

\begin{lem} Let $G$ and $G'$ are forms of a simple, simply connected
  split group $G^{**}$ over a nonarchimedean field $F$, for which the
  fixed choices can be chosen to be equal.  Assume the groups split
  over a tamely ramified extension of $F$.  Then $G$ and $G'$ fall
  into one of the three cases just enumerated.
\end{lem}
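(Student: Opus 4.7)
The plan is to decompose the argument by separating the classification of the quasi-split inner form $G^*$ of $G$ from the classification of the inner twist, matching each layer against the enumerated cases. The overarching principle is that the fixed choices $(G^{**},\Sigma,\rho_G,\phi_\tau)$ determine $G$ only up to data that the fixed choices are structurally unable to see: namely the specific ramified sub-extension of the splitting field in Cases (2) and (3), and the specific Brauer class beyond its denominator in Cases (1) and (3).

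First I would fix the quasi-split form. The data $(G^{**},\Sigma,\rho_G)$, together with an actual tamely ramified Galois extension $K/F$ whose Galois group is identified with $\Sigma$ in a manner respecting the short exact sequence $1\to\Sigma^t\to\Sigma\to\Sigma^{unr}\to 1$, determines $G^*$. The unramified sub-extension $E/F$ is uniquely determined by its degree $|\Sigma^{unr}|$, but the totally ramified extension $K/E$ of degree $|\Sigma^t|$ is not. By Remark~\ref{rem:rem}, when $\rho_G$ is nontrivial the group $\Sigma^t$ acts through a cyclic group of prime order (except in the $D_4$/triality case, where it may be of order $3$), and for each such order there are several choices of $K/E$ up to $E$-isomorphism, corresponding to distinct cosets in $E^\times/(E^\times)^{|\Sigma^t|}$. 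Different such choices yield non-isomorphic quasi-split forms $G^*$ sharing identical fixed data; this is the content of Case (2), and it feeds into Case (3) upon passage to inner forms.

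Next I would analyze inner forms. With $G^*$ fixed, inner forms are classified by $H^1(F,G^*_{\text{adj}})$, and the fixed choice $\phi_\tau$ encodes the action of $\Sigma^{unr}$ on the affine diagram that indexes the parahorics. For $G^{**}=SL(n)$ with $\rho_G$ trivial, the affine diagram is the $n$-cycle $\cal{A}_{n-1}$, and an inner form $SL_m(D)$ with $D$ a central division algebra of degree $d$ and $md=n$ has $\phi_\tau$ given by rotation of order exactly $d$. Thus $\phi_\tau$ records precisely the denominator $d$ of the Brauer invariant in lowest terms, so two inner forms of $SL(n)$ share fixed choices if and only if their Brauer invariants have the same denominator in lowest terms: this is Case (1). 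The analogous analysis for inner forms of a quasi-split, non-split $G^*$ arising as in Case (2) produces Case (3).

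The main obstacle is the type-by-type verification that $\phi_\tau$ really does encode precisely the order of the rotation corresponding to the denominator of the Kottwitz/Brauer invariant, and no finer information; this uses the explicit description of parahoric subgroups of inner forms in terms of the Kac/Gross labeling. One must also confirm that the three cases are genuinely exhaustive for simple simply connected groups in all types, with the $D_4$ triality case (where $\op{Out}(G^{**})=S_3$ is not cyclic of prime order) fitting into Cases (2)/(3) once one decomposes an $S_3$-extension along its unique normal $3$-cyclic subgroup. The remaining case-by-case combinatorial inspection of the affine diagrams is routine given the classification in \cite{Gross}.
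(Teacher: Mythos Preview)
Your proposal is correct and covers the same ground as the paper, but it is organized along a different axis. The paper splits first on whether $e=1$ (unramified splitting) or $e>1$ (ramified splitting), and then inspects the automorphism group of the relevant affine diagram ${}^e\R$ in each case: for $e=1$ and $\R\ne\cal{A}_n$ it observes that elements of $\op{Aut}(\R)$ of a given order are conjugate, so the fixed datum $\phi(\op{qFr})$ pins down $\phi(\op{Fr})$ up to an automorphism and hence determines $G$; for $\R=\cal{A}_n$ it notes that only the cyclic subgroup generated by $\phi(\op{qFr})$ is determined, yielding Case~(1); for $e>1$ quasi-split it reads off Case~(2) directly from $\rho_G$; and for $e>1$ non-quasi-split it lists the two diagrams ${}^2\cal{A}_{2n+1}$, ${}^2\cal{D}_n$ that admit a nontrivial automorphism and observes each has a unique nontrivial inner form, giving Case~(3). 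Your decomposition instead layers the problem as ``determine $G^*$, then determine the inner twist,'' which is conceptually clean and naturally separates the two sources of ambiguity (choice of ramified splitting field versus Brauer/Kottwitz invariant). The paper's organization has the advantage that the crucial observation---that for non-$A_n$ affine diagrams the order of an automorphism determines it up to conjugacy---is made explicit rather than deferred to a type-by-type check; this is really the content behind why Cases~(1)--(3) are exhaustive, so in your write-up it would be worth stating that fact rather than leaving it inside the appeal to \cite{Gross}.
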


We remark that the classification in this lemma with respect to
specific fixed data is not known to coincide with the classification
with respect to arbitrary definable data.  This is a question we leave
unanswered.

\begin{proof}
  Specifically, suppose that we are given $F$ and an enumerated Galois
  group $\op{Gal}(K/F)$ that splits $G$.  Suppose that it has a short
  exact sequence
\[
1 \to\op{Gal}(K/E)\to \op{Gal}(K/F) \to \op{Gal}(E/F)\to 1,
\]
where $E$ is the maximal unramified extension of $F$ in $K$,
that is isomorphic with the fixed data
\[
1 \to\Sigma^t\to\Sigma\to\Sigma^{unr}\to 1.
\]
The enumeration fixes a distinguished generator $\op{qFr}$ of
$\Sigma^{unr}$, which need not correspond with the Frobenius generator
of $\op{Gal}(E/F)$.

In what follows, we use the notation of the fixed choices in
Subsection~\ref{sec:fixed}.  We refer to \cite{Gross} for a detailed
treatment of this subject material.

First, consider the case $e=1$, where $e=[\Sigma:\Sigma^t]$.  This
means that the group splits over an unramified extension.  Part of the
fixed data is an action $\phi:\Sigma^{unr}\to \op{Aut}(\R)$ on the
affine diagram.  If the diagram is not $\cal{A}_n$, then $\phi(\op{qFr})$
is determined by its order up to an automorphism of $\R$.  Thus, by
adjusting by an automorphism, we may assume that
$\phi(\op{qFr})=\phi(\op{Fr})$, where $\op{Fr}$ is the Frobenius
automorphism of $\op{Gal}(E/F)$, under an identification of the
Galois group with $\Sigma^{unr}$.  We then know the action of Frobenius 
on $\R$, which is enough to determine the group $G$ up to isomorphism.
Thus the fixed data leaves no flexibility in the isomorphism class of the group.

When $e=1$ and $\R = \cal{A}_n$, the image of $\phi$ determines
a choice of Tits index (or equivalently to the set of simple roots
specifying the minimal parabolic subgroup).  We can specify the
minimal parabolic but not the particular form of the anisotropic
kernel (unless we are given the Frobenius generator).  In other words,
fixing the group $\phi(\Sigma^{unr})$ rather than $\phi(\op{Fr})$ is
sufficient to determine the denominator of the invariant in the Brauer
group, but not the numerator.  This is the first case.

Now consider the case $e>1$.  These are groups that split over a
ramified extension.  The fixed data determines whether the group is
quasi-split.  The quasi-split groups are precisely those for which
$\phi(\op{qFr})=1$ (that is, the case $c=1$ in \cite[\S7]{Gross}).

Assume $e>1$ and the group is quasi-split. 
In this case, the fixed data gives
\[
\rho_G:\Sigma\to \op{Aut}(\op{Dyn}).
\]
The image of $\Sigma$ is cyclic with corresponding extension totally
ramified, except in the case of an $S_3$-action on the Dynkin diagram.
This homomorphism and the identification of the short exact sequence
for $\op{Gal}(K/F)$ with that for $\Sigma$ determine the group up to
isomorphism.  See Remark~\ref{rem:rem}.  This is the second case.

Finally, consider the case of non quasi-split forms with $e>1$.  In
this situation, the only affine diagrams ${}^e\R$ with automorphisms
are ${}^2\cal{A}_{2n+1}$ (with one inner form) and ${}^2\cal{D}_n$
(with one inner form).  There is a unique inner form, so it is
necessarily uniquely determined by the fixed data, once the ramified
quadratic splitting field $K/F$ is given. This is the third case.
\end{proof}

\section{Open problems}\label{sec:open-problems}

\subsection{Igusa theory}

Langlands and Igusa have results about the asymptotic behavior of
integrals \cite{langlands1983orbital} \cite{igusa1978lectures}.  Under
certain assumptions, a family of $p$-adic integrals with parameter $x$
in a nonarchimedean field $F$ can be expanded in a finite asymptotic
series
\begin{equation}\label{eqn:igusa}
\sum_{a,b,\theta} c_{a,b,\theta} \,
\theta(x) \,|x|^a \, (\op{ord}x)^b\, 
\end{equation}
for some integers $a,b$, multiplicative characters $\theta:F^\times\to
\ring{C}^\times$, and for some constants $c_{a,b,\theta}$.  The
constants are given as principal-valued integrals.  The proof of the
asymptotic series relies on the Igusa zeta function.

In the article, we have presented some results about the asymptotic
behavior of integrals.  This suggests that it may be possible to adapt
the asymptotic series expansion (Equation \ref{eqn:igusa}) to a motivic
context.  A general motivic theory of multiplicative characters is
not yet available.  Nonetheless, an inspection of the proofs in
Langlands suggest that it might be possible to obtain an
expansion similar to (Equation \ref{eqn:igusa}) using far less than a general
theory of multiplicative characters.

\subsection{Twisted and nonstandard endoscopy}

In this article, we have transfered the endoscopic matching of smooth
functions for standard endoscopy.  Kottwitz, Shelstad, Waldspurger,
and Ng\^o have developed a corresponding theory for twisted endoscopy
\cite{kottwitz1999foundations}, \cite{waldspurger2008endoscopie},
\cite{ngo2010lemme}.  We expect that these results can be combined
with our methods to obtain transfer results for twisted endoscopic
matching of smooth functions.

\subsection{Smooth transfer and the trace formula}

Our article relies on Waldspurger's proof that the fundamental lemma
implies smooth matching in characteristic zero.  Waldspurger's primary
tool is a global Lie algebra trace formula based on the adelic Poisson
summation formula on the Lie algebra.  Hrushovski-Kazhdan and
Chambert-Loir-Loeser have developed motivic Poisson summation
formulas \cite{hrushovski2009motivic}, \cite{chambert2013motivic}.  In
fact, Hrushovski and Kazhdan have applied the motivic Poisson
summation formula to show that if two local test functions on two division
algebras have matching orbital integrals, then their Fourier
transforms also have matching orbital integrals~\cite[Theorem
1.1]{hrushovski2009motivic}.  Their methods and result are closely
related to Waldspurger's smooth matching result.  This suggests the
problem of adapting Waldspurger's proof to a motivic setting.

\subsection{Definably indistinguishable reductive groups}

In Section~\ref{sec:classification}, we described the extent to which
fixed choices may be used to distinguish various isomorphism classes
of reductive groups.  We may ask further whether {\it groups in the
  same family (1), (2), (3) of Section~\ref{sec:classification} are
  indistinguishable with respect to all (first-order) properties
  expressible in the Denef-Pas language?}

We may also ask {\it to what extent is the harmonic analysis on two groups
in the same family the same?}

We give an example.  Consider the Denef-Pas language with constants in
$\ring{Z}[[\pi]]$, where $\pi$ is simply a variable.  Consider the
quasi-split definable simply connected group of type ${}^eA_n$,
${}^eD_n$, or ${}^eE_6$, with $e=2$ or $3$ as appropriate, associated
with the totally ramified extension $K/\VF$ of degree $e$:
\[
K = \VF[t]/(t^e - \pi),\quad \ord (\pi) = 1.
\]
We make the remarkable observation that the given definable group has
nonisomorphic interpretations as reductive groups over a
nonarchimedean field $F$, simply because nonisomorphic field
extensions of $F$ are obtained by different choices of the uniformizer
$\pi_F\in F$ interpreting $\pi$.  So in particular, for two different
quasi-split unitary groups over $F$, splitting over different ramified
quadratic extensions, the Shalika germs are given by identical
constructible formulas, differing over $F$ only by the specialization
of $\pi$ to different uniformizers $\pi_F\in F$.  This suggests that
the harmonic analyses on two groups in the same family are very closely
related.

\subsection{Definability of nilpotent orbits}

We have carefully avoided the issue of the definability of nilpotent
orbits in this article.  This has required us to use certain roundabout
constructions such as the set $\NF^k_G$.

If we allow a free parameter $N$ that ranges over the nilpotent set,
then the nilpotent orbits are trivially definable by the formula
\[
O(N) = \{N' \mid \exists g\in G,~ \op{ad}g\,(N) = N' \}.
\]  
However, this leaves open the question of whether nilpotent orbits can
be defined in the Denef-Pas language without the use of parameters.
Diwadkar has obtained partial results on this problem
\cite{diwadkar2006nilpoten}.  The results of Barbasch-Moy and DeBacker
on the classification of nilpotent orbits --together with definability
for the Moy-Prasad filtration \cite{CGH} -- reduce this problem to the
definability of nilpotent orbits (or more simply of distinguished
nilpotent orbits) in reductive groups over finite fields.  For some
groups such as $SL(n)$, roots of unity are required to specify the
nilpotent orbits.  But in the adjoint case, the situation is
simpler. This leads to the following question: Let $G=G_{\text{adj}}$
be an adjoint semisimple linear algebraic group defined over a finite
field.  When the characteristic is sufficiently large, is each
nilpotent orbit in the Lie algebra definable in the first-order
language of rings?

\section{Errata}

We mention two corrections to \cite{CHL}.  In Section 3.2 of that
article, the parameter space of unramified extensions requires more
than the single parameter $a$ that was given.  This does not affect
the correctness of the results, but the dimension of the parameter
space needs to be changed throughout the article.

In Section 4.2, the construction of the measure on the stable orbit as
a Leray residue is correct, but the surrounding comments contain
inaccuracies and should be deleted.

\bibliographystyle{alpha}
\bibliography{refs}

\end{document}